\title{Continuum of coupled Wasserstein gradient flows}
\author{Clément Cancès}
\address{Univ. Lille, CNRS, Inria, UMR8524 -- Laboratoire Paul Painlevé, F-59000 Lille}
\email{clement.cances@inria.fr}
\author{Daniel Matthes}
\address{Department of Mathematics, TUM School of Computation, Information and Technology, Technical University of Munich, Germany}
\email{matthes@ma.tum.de}
\author{Ismael Medina}
\address{Institute of Computer Science, University of Göttingen, Göttingen, Germany}
\email{ismael.medina@cs.uni-goettingen.de}
\author{Bernhard Schmitzer}
\address{Institute of Computer Science, University of Göttingen, Göttingen, Germany}
\email{schmitzer@cs.uni-goettingen.de}
\date{\today}
\newcommand{\R}{{\mathbb R}}
\newcommand{\Rnn}{{\mathbb R}_{\ge0}}
\newcommand{\dd}{\,{\mathrm d}}
\newcommand{\nml}{{\mathbf n}}
\newcommand{\eps}{\varepsilon}
\newcommand{\nrg}{{\mathcal E}}
\newcommand{\entr}{{\mathcal H}}
\newcommand{\adm}{{\Gamma(\mu, \nu)}}
\newcommand{\indic}{\iota}
\newcommand{\id}{\operatorname{id}}
\newcommand{\wass}{{\mathbf W}}
\newcommand{\wassF}{\wass_F}
\newcommand{\intX}{\int_X}
\newcommand{\intXY}{\int_{X\times Y}}
\newcommand{\weakto}{\rightharpoonup}
\newcommand{\proj}{\mathrm{P}}
\newcommand{\tn}[1]{\textrm{#1}}
\DeclareMathOperator*{\argmin}{argmin}
\newcommand{\meas}{\mathcal{M}}
\newcommand{\measp}{\mathcal{M}_+}
\newcommand{\prob}{\mathcal{P}}
\newcommand{\assign}{:=}
\newcommand{\veps}{\varepsilon}
\newcommand{\diam}{\tn{diam}}
\newcommand{\la}{\left\langle}
\newcommand{\ra}{\right\rangle}
\newcommand{\Lebesgue}{\mathcal{L}}
\newcommand{\DU}{\overline{D}_\veps}
\newcommand{\rhod}{r}
\newcommand{\varrhod}{\tilde{r}}
\newtheorem{lemma}{Lemma}
\theoremstyle{definition}
\newtheorem{remark}{Remark}
\begin{document}

\begin{abstract}
We study a system of drift-diffusion PDEs for a potentially infinite number of incompressible phases, subject to a joint pointwise volume constraint.
Our analysis is based on the interpretation as a collection of coupled Wasserstein gradient flows or, equivalently, as a gradient flow in the space of couplings under a `fibered' Wasserstein distance.
We prove existence of weak solutions, long-time asymptotics, and stability with respect to the mass distribution of the phases, including the discrete to continuous limit.
A key step is to establish convergence of the product of pressure gradient and density, jointly over the infinite number of phases. 
The underlying energy functional is the objective of entropy regularized optimal transport, which allows us to interpret the model as the relaxation of the classical Angenent--Haker--Tannenbaum (AHT) scheme to the entropic setting.
However, in contrast to the AHT scheme's lack of convergence guarantees, the relaxed scheme is unconditionally convergent.
We conclude with numerical illustrations of the main results. 
\end{abstract}

\keywords{multiphase porous media flow, fibered Wasserstein gradient flow, minimizing movement scheme, Angenent—Haker—Tannenbaum scheme, entropic optimal transport}

\subjclass[2020]{
  35K51, 35A15, 35A35
}

	\maketitle

	\section{Introduction}
	\label{sec:intro}
	\subsection{Problem setup}
	Let $X\subset \R^d$ be the closure of a bounded and connected open set with smooth boundary, and let $Y\subset \R^{d'}$ be compact. 
	Let $\Lebesgue$ denote the Lebesgue measure on $X$, $\mu$ be a $\Lebesgue$-absolutely continuous probability measure on $X$ and $\nu$ an arbitrary probability measure on $Y$.
	
	We will denote by $\rho$ a probability measure on $X\times Y$, absolutely continuous with respect to $\Lebesgue \otimes \nu$ and with density $\rhod$ in $L^1(X\times Y, \Lebesgue \otimes \nu)$. Furthermore, let  $V\in C^2(X\times Y)$ be a non-negative function and $\kappa>0$ be a positive parameter. 
	
	Motivated on the one hand by multiphase porous media flows ---see for instance \cite{BB90} for a presentation of the physical models, \cite{Chen01} for some elements of mathematical analysis in the two-phase context, and for \cite{otto1999evolution, CGM15, CGM17} for a variational reinterpretation of the multiphase flow problem (see also \cite{OE97, CMN19, CM23})---, and on the other hand by the classical Angenent-Haker-Tannenbaum (AHT) scheme for $L^2$ optimal transport \cite{AHT} --- see below for a detailed explanation --- we study the following stratified gradient flow:
	\begin{align}
	\label{eq:PDE1}
	\partial_t\rhod(t,x,y) &= \kappa\Delta_X\rhod(t,x,y) + \nabla_X\cdot\big(\rhod(t,x,y)\,\nabla_X[V(x,y)-\Pi(x)]\big), \\
	\label{eq:PDE1b}
	0 &= \nml\cdot\big(\kappa\nabla_X\rhod(t,x,y)+\rhod(t,x,y)\,\nabla_X[V(x,y)-
	\Pi(t,x)]\big) \quad \text{for $x\in\partial X$}, \\
	\label{eq:PDE2}
	\mu(x) &= \int_Y\rhod(t,x,y)\dd \nu(y), 
	\end{align}
    where $\nabla_X$ and $\nabla_X\cdot$ denote the gradient and the divergence with respect to the variable $x \in X$ only. 

	The pressure $\Pi$ in \eqref{eq:PDE1} is implicitly given via the \textit{first marginal} constraint \eqref{eq:PDE2}, 
	where by slight abuse the notation we use the symbol $\mu(x)$ for the (Lebesgue) density of the Lebesgue-absolutely continuous measure $\mu$.
	Note that the mass at level $y\in Y$, that we denote \textit{second marginal}, is constant in time thanks to the divergence structure of \eqref{eq:PDE1} and the no-flux boundary condition \eqref{eq:PDE1b}. 
	Assuming the initial condition has second marginal $\nu$, it will be subsequently preserved over time.
 	
	Formally, \eqref{eq:PDE1}--\eqref{eq:PDE2} is a $y$-wise Wasserstein gradient flow for the potential energy
	\begin{align}
	\label{eq:energy}
	\nrg(\rho) = \int_{X\times Y} V(x,y) \dd \rho(x,y) + \kappa\entr(\rho) +\indic_{\adm}(\rho),
	\end{align}
	where $\entr$ denotes the weighted negative entropy 
	\begin{equation}
	\label{eq:entr}
	\entr(\rho\mid \nu) := \int_{X\times Y} (\rhod \log \rhod -\rhod + 1) \dd x \dd \nu(y)
	\end{equation}
	(we will write $\entr(\rho)$ whenever $\nu$ is clear from the context)
	and $\indic_\adm$ is the indicator function of the feasible couplings (or transport plans) between $\mu$ and $\nu$, see \eqref{eq:coupling} below for a rigorous definition. 
	
	\subsection{First motivation: multiphasic flows}
	\label{sec:motivation}
	\eqref{eq:PDE1}--\eqref{eq:PDE2} arises naturally as a generalization of multiphase models such as those studied in \cite{CGM17,CZ22} to the case of a (possibly) infinite number of species. 
 	Indeed,	Equations \eqref{eq:PDE1}--\eqref{eq:PDE2} boil down to such models when the space $Y$ is finite, or equivalently if the measure $\nu$ is made of a finite number of Dirac masses:
	 \begin{equation}
	 \nu = \sum_{i=1}^k m_i \delta_{y_i},
	 \qquad 
	 \text{with } (y_i)_{i=1}^k \subset Y
	 \text{ and } \sum_{i=1}^k m_i = 1.
 	\end{equation}
	
	The measure $\mu$ then corresponds to the porosity of the material, i.e.~the local ratio of void space available for the fluid in the  solid porous structure, within which the multiphase and incompressible fluid is flowing.
	The function $V(\cdot, y_i)$ then denotes the intrinsic potential energy of the $i$-th species.
	Condition~\eqref{eq:PDE2} is then interpreted as the constraint that the whole porous space is occupied by the fluid mixture.

	In our setting, $\kappa$ is a given positive constant that encodes the intensity of the term $\kappa\entr(\rho)$, which could be interpreted as a capillary energy term. The limiting case of vanishing capillarity $\kappa=0$ is known to be highly complex. Even in the seemingly simple two-phase case, the well-posedness of the problem remains widely open. In the one-dimensional case $d=1$, the problem reduces to some Burgers' type equation~\cite{otto1999evolution}. It is now well understood that in this particularly simple setting, the JKO scheme selects the unique entropy solution in Kruzkov's sense~\cite{otto1999evolution, GO13}. In larger dimension, tools from convex integration can even be used to construct multiple weak solutions \cite{CFG11, Szekelyhidi2012, FS18}. Moreover when the space dimension $d$ is greater or equal to two, there is no available criterion to charaterize the solution captured by the JKO scheme, and up to our knowledge there is also no general existence theorem unless the external potential is identically equal to zero~\cite{Vig96}. 
	
 	Let us expand on the physical interpretation of \eqref{eq:PDE1}-\eqref{eq:PDE2}.
	Formally, \eqref{eq:PDE1} can be written as: 
 	\begin{equation}
 	\label{eq:continuity-equation}
 		\partial_t \rhod_t + \nabla_X \cdot (r_t v_t) = 0,
 		\qquad \text{with}
 		\quad
 		v_t(x,y) \assign \nabla_X [\Pi_t(x) - V(x,y) - \kappa \log r_t(x,y)].
 	\end{equation}
 	
 	Integrating \eqref{eq:continuity-equation} with respect to the $Y$-marginal $\nu$ yields:
 	\begin{equation}
 		\label{eq:divergence-free}
 		0 = \partial_t \mu = \partial_t \left(
 		\int_Y r_t(x,y) \dd \nu(y)
 		\right)
 		= 
 		- \nabla_X \cdot \left(
 		\int_Y r_t(x,y) v_t(x,y) \dd \nu(y)
 		\right).
 	\end{equation}
 	Equation \eqref{eq:divergence-free} shows that the $X$-projection of the momentum field $\rho_t v_t$ results in an $X$-divergence-free velocity field, therefore preserving the pointwise $X$-marginal constraint. 
 	In fact, the following decomposition (closely related to the Helmholtz decomposition) holds: 
 	\begin{equation}
 		-\nabla_X [V(x,y) + \kappa \log \rhod_t(x,y)] 
 		= 
 		-\nabla \Pi_t(x) + v_t(x,y).
 	\end{equation}
 	
 	The left-hand side corresponds to the steepest descend direction for \eqref{eq:energy} once one removes the marginal constraint.
 	Thus, the velocity field $v_t(x,y)$ encodes the divergence-free part of the unconstrained velocity field, with $\nabla \Pi_t(x)$ claiming the gradient component. 
 	Whenever the left-hand side can be written as a sheer gradient of a function on $X$, the velocity field $v_t$ becomes identically zero, resulting in a stationary point of  \eqref{eq:PDE1}-\eqref{eq:PDE2}. 
  Intuitively, $v_t$ will progressively cancel the curl component of the left-hand side until such a stationary point is attained.
 	This will become clearer when we explore the connections to the AHT scheme \cite{AHT} in Section \ref{sec:motivation-aht}.
 	
 	For future reference, let us conclude by stating the elliptic equation satisfied by $\Pi_t$, which is a direct consequence of \eqref{eq:divergence-free} plus the marginal constraints:
 	\begin{equation}
 	\label{eq:PDE3}
 	\nabla \cdot [\mu(x) \nabla \Pi_t(x)] 
 	= 
 	\kappa\Delta \mu(x) 
 	+
 	\nabla \cdot \left[
 	\int_Y \nabla_X V(x,y) \rhod(t,x,y) \dd \nu(y)
 	\right].
 	\end{equation}

	\subsection{Second motivation: Angenent-Haker-Tannenbaum scheme}
	\label{sec:motivation-aht}
	The field of computational optimal transport offers an alternative interpretation for the evolution equations \eqref{eq:PDE1}--\eqref{eq:PDE2}. 
	From this perspective, the energy \eqref{eq:energy} is the entropic transport score of the coupling $\rho$ between $\mu$ and $\nu$ for the cost function $V$ and regularization strength $\kappa$. This entropic objective is a common approximation to the classical, unregularized optimal transport problem (corresponding to $\kappa = 0$), which has gained increased popularity thanks to the availability of efficient solvers, most notably the  Sinkhorn algorithm \cite{cuturi2013,PeyreCuturiCompOT} and the connection to stochastic optimal control problems \cite{Chen2016, TreeMultiMarginal2020, Lavenant2021}.
	From this perspective, the evolution equations \eqref{eq:PDE1}--\eqref{eq:PDE2} address the  solution of the entropic transport problem by evolving the density of the transport plan in the direction of steepest descent of the entropic score $\nrg$, conditioned to keeping the marginals feasible.

	A similar philosophy is central to the Angenent-Haker-Tannenbaum (AHT) scheme \cite{AHT,SantambrogioOT}, a classical algorithm for unregularized optimal transport (i.e.~$\eqref{eq:energy}$ for $\kappa = 0$) and $V$ strictly convex. In this regime the minimizers of $\nrg$ are very sparse ---when $\mu$ has a density, each $x\in X$ sends mass to a single $y\in Y$ \cite{MonotoneRerrangement-91, McCannGangboOTGeometry1996} 
	--- which motivates the parametrization of the current plan $\rho$ as a \textit{transport map}.
	Hence, the AHT scheme considers a minimizing flow for $\nrg$ in the space of transport maps, progressively removing the curl component of an initial transport map $T_0$ until only the gradient component remains. Formally, for $T_0: X\rightarrow Y$ verifying $\nu = T_{0\sharp} \mu$, the AHT evolution is driven by: 
	\begin{equation}
	\label{eq:AHTFlowGeneral}
	T_t = T_0 \circ s_t^{-1} \qquad \text{with} \qquad s_0=\text{id}_X, \qquad \partial_t s_t = v_t \circ s_t,
	\qquad \nabla \cdot (\mu v_t )=0
	\end{equation}
	where $v_t$ is a velocity field whose divergence constraint ensures $(s_{t})_\sharp \mu = \mu$, and therefore ---if such $s_t$ exists--- $T_t$ stays a feasible transport map between $\mu$ and $\nu$ at all times $t\ge 0$. $v_t$ is chosen among the feasible velocity fields to realize the steepest descent for the transport cost
	\begin{equation}
	\nonumber
	E(T) = \int_X V(x,T(x)) \mu(x) \dd x, \qquad 
	\partial_t E(T_t) = \int_X \langle \nabla_X V(x,T_t(x)), v_t(x) \rangle  \mu(x) \dd x 
	\end{equation}
	with respect to some metric of choice. 
	In \cite{AHT} the velocity field is chosen as $v_t(x) = u_t(x) / \mu(x)$, where $u_t(x)$ is the $L^2(X)$ projection of $x\mapsto -\nabla_X V(x, T_t(x))$ onto the space of divergence-free velocity fields.
	If one chooses instead the $L^2(X, \mu)$-metric, the steepest descent is given by setting $v_t$ to the projection of the vector field $x \mapsto -\nabla_X V(x,T_t(x))$ onto the subspace satisfying $\nabla \cdot (\mu v_t)=0$ in $L^2(X, \mu)$. 
	This projection is given by
	\begin{equation}
	\label{eq:projection-AHT}
	v_t(x) := \nabla \Pi(x) - \nabla_X V(x,T_t(x))
	,
	\quad
	\tn{with }
	\nabla \cdot [\mu(x) \nabla \Pi(x)] =
	\nabla \cdot 
	[\mu(x)\nabla_X V(x,T_t(x))]
	\end{equation}
	
	Leaving aside the (challenging) problem of existence of solutions to \eqref{eq:AHTFlowGeneral}-\eqref{eq:projection-AHT}, the similarities between  \eqref{eq:continuity-equation} and \eqref{eq:projection-AHT} are striking.
    Indeed, $\Pi$ satisfies both in \eqref{eq:PDE3} and in \eqref{eq:projection-AHT} the same elliptic equation, once we (formally) allow in the former the choice $\kappa = 0$ and that $\rho_t$ is given by a Monge plan $\rho_t = (\id, T_t(x))_\sharp \mu$. 
    Moreover, the AHT velocity field $v_t(x) = \nabla \Pi(x) - \nabla_X V(x,T_t(x))$ is also analogous to that in \eqref{eq:continuity-equation}, once we consider that in the AHT only points of the form $(x, T_t(x))$ carry mass, and therefore the velocity field can be given as a function of $x$ instead of $(x,y)$. For these reasons one may additionally interpret \eqref{eq:PDE1}--\eqref{eq:PDE2} as a relaxation of the AHT scheme to entropic optimal transport, which inevitably requires the relaxation from transport maps to transport plans.
    \smallskip
	
	Leaving aside the well-posedness issues of the AHT scheme --- which largely align with those outlined in Section \ref{sec:motivation} for the limit of vanishing $\kappa$ in  multiphase flows---, the main drawback of the AHT scheme is that it is unable to escape certain suboptimal configurations. Even though, by construction, the score $E(T)$ is non-increasing along solutions of \eqref{eq:AHTFlowGeneral}-\eqref{eq:projection-AHT}, the stationary points are all the feasible maps $T$ that verify that $x \mapsto \nabla_X V(x, T(x))$ is curl-free. Unfortunately this characterization does not suffice to grant optimality, and may in some cases include not only the best but also the \textit{worst} transport map, as shown by the following example:
	\begin{align*}
		X = Y = [-1, 1]^d,
		\quad 
		\mu = \nu = \tfrac{1}{2^d}\Lebesgue,
		\quad 
		V(x,y) = |x-y|^2,
		\quad 
		T(x) \assign -x,
		\quad 
		\nabla_X V(x, T(x)) = 4x.
	\end{align*}
	In this case the optimal transport map would correspond instead to $T^* : x \mapsto x$, and $T$ thus corresponds to the map \textit{maximizing} the total transport cost.
	However, since $\nabla_X V(x, T(x))$ is curl-free, the velocity field $v_t$ given by \eqref{eq:projection-AHT} is identically zero, and the trajectory $T_t$ with initial condition $T_0 = T$ remains stationary in an utterly suboptimal configuration. 

Except for special cases \cite{Brenier2009_AHT,SantambrogioOT}, convergence of the AHT scheme to the optimal transport map thus remains an open question.
	Up until now, the question of whether a suitable relaxation can make the AHT scheme unconditionally convergent has remained open as well. 
 We expect that regularization by diffusion for $\kappa>0$ helps the algorithm to escape from such configurations.

	\subsection{Outline}\hfill
	
	\textit{Existence of weak solutions.}
	We dedicate Section \ref{sec:preliminaries-ot} to recall the relevant optimal transport background. 
	Under suitable regularity assumptions, in 
	Section \ref{sec:minimizing-movement} we build an approximating sequence for the gradient flow of the energy $\nrg$ by means of a minimizing movement scheme and discuss the relevant a-priori estimates. Section \ref{sec:existence} shows convergence of such approximating sequences to a continuous limit trajectory $\rho$ with values on $\prob(X \times Y)$ and a pressure $\Pi$ in $L^2_{loc}(\Rnn; H^1(X))$. $(\rho, \Pi)$ satisfy the system of partial differential equations: 	
	\begin{equation}
	\int_0^\infty \intXY
	\partial_t \psi \dd \rho
	=
	\int_0^\infty \intXY
	\big(
	\nabla_X \psi\cdot\nabla_X [V - \Pi] 
	-
	\kappa
	\Delta_X \psi 
	\big)\dd \rho
	-
	\int_{X\times Y}
	\psi(0) \dd \rho^0
	\end{equation}
	for all $\psi \in C_c^\infty(\Rnn\times X \times Y)$ with $\nabla_X \psi \cdot \mathbf{n} \equiv 0$ on $\partial X$, and
	\begin{equation}
	\int_X \nabla \xi \cdot \nabla \Pi(t)\, \mu \dd x
	=
	\int_X \nabla \xi\cdot  \left[
	\int_Y \nabla_X V(x,y) \rhod(t,x, y) \dd \nu(y)
	\right]
	\dd x
	-\kappa \int_X \mu \Delta \xi \dd x
	\end{equation}
	for all $ \xi\in C^\infty(X)$ with $\nabla \xi \cdot \mathbf{n} = 0$ on $\partial X$ and a.e. $t\in\Rnn$, where $\rhod$ is the density of $\rho$ w.r.t $\Lebesgue \otimes \nu$.
	
	\textit{Stability of solutions.} 
	Section \ref{sec:stability} shows stability of weak solutions under convergence of the marginals $\mu$ and $\nu$ and of the initial condition $\rho^0$. 
	This relates the continuous $\nu$ limit to previous multiphasic flow studies such as \cite{OE97, CMN19, CM23}, identifying our dynamics as the limit of the multiphasic case when the mass of each phase $\nu$ is not a discrete measure supported on a finite set but rather a general probability measure. 
	
	\textit{Asymptotic convergence to the minimizer of $\nrg$.} 
	In Section \ref{sec:asymptotic} we show that weak solutions converge to the unique minimizer of the energy $\nrg$ as $t$ goes to infinity. The relevance of this result stems from the non-convexity of $\nrg$ in the fibered topology of the minimizing movement scheme. We also discuss implications to computational optimal transport. 
	
	\textit{The regime $\kappa = 0$.} 
	In Section \ref{sec:unregularized} we discuss the $\kappa = 0$ regime, and whether the results of Section \ref{sec:asymptotic} may apply to this limit. As explained in Section \ref{sec:motivation}, existence of solutions remains an open problem even in for the two-phase case, since the only source of regularity (the entropic term) in \eqref{eq:PDE1}--\eqref{eq:PDE2} is lost. 
	However, from the computational optimal transport perspective this regime is extremely interesting, as it would correspond to a relaxation of the AHT scheme to the space of transport \textit{plans}, while staying in the unregularized transport regime. 
	
	We show that, even though we cannot provide existence of solutions in the general case, the $\kappa= 0$ regime shares with the AHT scheme the existence of suboptimal stationary configurations.
	For instance, for $V(x,y) = |x-y|^2$, all initial conditions of the form $\rho^0 = (\id, \nabla\phi)_\sharp \mu$ with $\phi\in C^2(X)$ turn out to be stationary. 
	The intuition is that, for $\tau$ sufficiently small, $\rho^0$ becomes a stationary point of the minimizing movement scheme, and therefore also a stationary constant weak solution. 
	We conclude that a relaxation to the space of transport plans does not improve the convergence properties of the AHT scheme significantly, and that some kind of regularization is a necessary component for an unconditionally convergent variant of the AHT algorithm.
	
	\textit{Numerical simulations.} Finally, Section \ref{sec:numerics} exemplifies the stability and convergence results with numerical simulations of the minimizing movement scheme. 
	Experimentally we observe 
	linear convergence to the energy $\nrg$, which may open the door to novel numerical methods for the solution of entropic optimal transport in geometric domains.
	\subsection{General hypotheses}
	\label{sec:hypotheses}
	\begin{itemize}
		\item We assume $X \subset \R^d$ and $Y \subset \R^{d'}$ to be compact sets. $X$ is further assumed to have connected interior and a smooth boundary.
		\item We will employ the symbol $\mu$ for both a probability measure on $X$ and its density with respect to $\dd x$; the context should resolve any ambiguity. We will also assume that $\mu$ is bounded away from zero and from above, and that it has \textit{finite Fisher information}: 
		\begin{equation}
		\label{eq:fisher-info-mu}
		\frac14 \|\nabla \log \mu\|_{L^2(X,\mu)}^2
		= \int_X |\nabla \sqrt{\mu}|^2 \dd x < \infty.
		\end{equation} 
		Note that the boundedness away from zero and from above of $\mu$ makes the $L^2(X)$ and $L^2(X, \mu)$ norms (resp. $H^1(X)$ and $H^1(X, \mu)$ norms) equivalent, so we will often use them interchangeably. Finiteness of these norms for the marginal $\mu$ is needed to prove ``horizontal'' regularity of the solution $r$ in $H^1$. 
		\item We denote by $\nu$ a probability measure on $Y$.
		\item We use the symbol $\rho$ to denote a coupling between $\mu$ and $\nu$. Its Radon-Nykodym derivative with respect to $\Lebesgue \otimes \nu$, which (when it exists) is an element of $L^1(X\times Y, \Lebesgue\otimes \nu)$, is denoted by $r$. 
		We will use Lebesgue as the $X$-reference measure instead of $\mu$ because it simplifies the change of variables formula.
		Integration of a measurable function $f$ on $X$ with respect to the Lebesgue measure is simply denoted by $\int_X f \dd x$.
		\item $V$ will be a positive $C^2(X\times Y)$ function, and $\kappa$ will represent a positive parameter.
		\item Finally, we assume that the initial condition $\rho^0$ has finite energy $\nrg(\rho^0)$.
	\end{itemize}

	\section{Preliminaries}
	\label{sec:preliminaries-ot} 
	
	In the following chapters we will leverage the theory of optimal transportation to build an approximation sequence for the solutions of \eqref{eq:PDE1}--\eqref{eq:PDE2}. 
	For a thorough exposition of optimal transport we refer to \cite{SantambrogioOT,Villani-TOT2003}.
	
	For $Z\subset \R^d$ a compact set, denote by $\meas(Z)$ the set of Radon measures on $Z$, by $\measp(Z)$ the set of positive Radon measures on $Z$ and by $\prob(Z)$ the set of probability measures on $Z$. For $Z_1 \subset \R^d, Z_2\subset \R^{d'}$ compact sets and $\rho_1\in \prob(Z_1),\rho_2\in \prob(Z_2)$ two probability measures,  $\Gamma(\rho_1, \rho_2)$ denotes the set of \textit{couplings} or \textit{transport plans} between $\rho_1$ and $\rho_2$:
	\begin{equation}
	\label{eq:coupling}
	\Gamma(\rho_1, \rho_2) 
	\assign 
	\{
	\gamma\in\prob(Z_1\times Z_2)
	\ 
	\mid
	\ 
	\proj_1 \gamma = \rho_1,
	\quad
	\proj_2 \gamma = \rho_2   
	\}
	\end{equation}
	where the maps $\proj_1$ and $\proj_2$ denote the projections of measures on $Z_1\times Z_2$ to its marginals, i.e.
	\begin{align*}
	(\proj_1 \gamma)(S_1) \assign \gamma(S_1 \times Z_2) \qquad \tn{and} \qquad (\proj_2 \gamma)(S_2) \assign \gamma(Z_1 \times S_2)
	\end{align*}
	for $\gamma \in \prob(Z_1\times Z_2)$, $S_1 \subset Z_1$, $S_2 \subset Z_2$ measurable.
	
	The optimal transport problem between $\rho_1$ and $\rho_2$ with a continuous cost function $c: Z_1\times Z_2 \to \Rnn$ is then given by
	\begin{equation}
	\label{eq:ot}
	\min_{\gamma\in\Gamma(\rho_1, \rho_2)}\int_{Z_1\times Z_2} c(z_1,z_2) \dd \gamma(z_1, z_2).
	\end{equation}
	For compactly supported marginals and a lower-semicontinuous cost function $c$, Problem \eqref{eq:ot} admits a minimizer.
	
	When $d = d'$ and $c(z_1,z_2) = |z_1-z_2|^p$ with $p\in [1, \infty)$, the minimal score of \eqref{eq:ot} is (the $p$-th power of) the $L^p$ \textit{Wasserstein distance} (denoted by $\wass_p$) between $\rho_1$ and $\rho_2$. With no subscript, $\wass$ will be understood to denote the $\wass_2$ distance. 
	When $z_2 \mapsto c(z_1,z_2)$ is strictly convex for all $z_1\in Z_1$ (as is the case for the $L^p$ distance with $p>1$) and $\rho_1$ is Lebesgue-absolutely continuous, the unique minimizer of \eqref{eq:ot} is of the form $\gamma = (\id, T)_\sharp \rho_1$ \cite{MonotoneRerrangement-91,McCannGangboOTGeometry1996}, and $T$ solves the so-called \textit{Monge problem}:
	\begin{equation}
	\label{eq:monge}
	\min_{T_\sharp \rho_1 = \rho_2}\int_{Z1} c(z,T(z)) \dd \rho_1(z) .
	\end{equation}
	
	The optimal transport problem \eqref{eq:ot} admits the dual formulation: 
	
	\begin{equation}
	\label{eq:dual-ot}
	\sup_{\Phi(z_1) +  \Psi(z_2) \le c(z_1, z_2)}
	\int_{Z_1} \Phi(z_1)\dd \rho_1(z_1)  + \int_{Z_2} \Psi(z_2)\dd \rho_2(z_2) 
	\end{equation}
	where the supremum runs over continuous functions from $Z_1$ (resp. $Z_2$) to $\R$. 
	If $c$ is uniformly continuous maximizers for \eqref{eq:dual-ot} exist.
	
	When $Z_1 = Z_2 =: Z$, the $L^p$-Wasserstein distance for $p=1$ takes a particularly convenient form:
	\begin{equation}
	\label{eq:dual-wass1}
	\wass_1(\rho_1, \rho_2) 
	=
	\sup_{\text{Lip}(\Phi) \le 1}
	\int_Z \Phi(z)  \dd (\rho_1 - \rho_2)(z),
	\end{equation}
	which is known as the \textit{Kantorovich-Rubinstein formula}. On the other hand, for $p = 2$ it is customary to rewrite the dual relation \eqref{eq:dual-ot} as: 
	\begin{equation}
	\frac{1}{2} \wass(\rho_1, \rho_2)^2=\sup_{\Phi(z_1)+\Psi(z_2) \leq \frac{1}{2}|z_1-z_2|^2}
	\left( 
	\int_{Z_1} \Phi(z_1)\dd \rho_1(z_1)  + \int_{Z_2} \Psi(z_2)\dd \rho_2(z_2)
	\right).
	\label{eq:dual-wass2}
	\end{equation}
	This re-scaling allows to identify the optimal transport map $T$ as $T = \id - \nabla \Phi$ ($\rho_1$-almost everywhere). Besides, $\wass_1$ and $\wass_2$ satisfy an order relation: for $\gamma_1$ and $\gamma_2$ optimal transport plans between $\rho_1$ and $\rho_2$ for $\wass_1$ and $\wass_2$ respectively, it holds: 
	
	\begin{equation}
	\begin{aligned}
	\wass_1(\rho_0, \rho_1) 
	&= 
	\int_{Z^2} |z-z'| \dd \gamma_1(z,z')
	\le 
	\int_{Z^2}|z-z'| \dd \gamma_2(z,z')
	\\
	&\le
	\left(\int_{Z^2} |z-z'|^2 \dd \gamma_2(z,z')\right)^{1/2}
	=
	\wass_2(\rho_0, \rho_1),
	\label{eq:order-wass}
	\end{aligned}
	\end{equation}
	where the first inequality comes from the optimality of $\gamma_1$ and in the second we use Jensen's inequality. 
	
	A notion of `fiber-wise' Wasserstein distance will be central to our study. 
	Let $Z = X \times Y$ with $X \subset \R^d$ and $Y\subset \R^{d'}$ compacts and let $\nu \in \prob(Y)$. 
	For $\rho \in \prob(X\times Y)$ with $\proj_2 \rho = \nu$, define the \textit{disintegration of $\rho$ with respect to its second marginal} as the measurable selection of probability measures $(\rho_{y})_{y\in Y}$ satisfying:
	\begin{equation}
	\intXY \psi(x,y) \dd \rho(x,y)
	=
	\int_Y \left[
	\int_X \psi(x,y)
	\dd \rho_{y}(x)
	\right]
	\dd \nu(y)
	\end{equation}
	for all $\psi\in C(X\times Y)$.
	$\rho_y$ is uniquely defined $\nu$-almost everywhere.
	Then for $\rho_1, \rho_2\in \prob(X\times Y)$ with common second marginal, i.e.~$\proj_2 \rho_1 = \proj_2 \rho_2 = \nu$, define the \textit{fiber-wise Wasserstein distance} $\wassF$ by
	\begin{align}
	\wassF (\rho_1, \rho_2)^2 
	&:=
	\int_Y
	\wass(\rho_{1,y}, \rho_{2,y})^2 \dd \nu(y)
	\label{eq:wassF}
	\end{align}
	
	$\wassF$ is a metric by \cite{wass-fiber}, and for our case of study admits the following reformulations:
	\begin{lemma}
		\label{lemma:equivalence-formulation-wassF}
		If $\rho_1$ or $\rho_2$ is $\Lebesgue\otimes \nu$-absolutely continuous, the fiber-wise Wasserstein distance $\wassF$ admits the following equivalent formulations: 
		\begin{align}
			\wassF (\rho_1, \rho_2)^2 
			&=
			\inf_{
				\scriptsize
				\begin{array}{c}
					\gamma\in\measp(X\times X \times Y)
					\\
					\proj_{13} \gamma = \rho_1, \proj_{23} \gamma = \rho_2
				\end{array}
			}
			\int_{X\times X \times Y}
			|x-x'|^2 \dd \gamma(x,x', y)
			\label{eq:wassF1}
			\\
			&=
			\inf_{
				\scriptsize
				\begin{array}{c}
					\tilde{\gamma}\in\Gamma(\rho_1, \rho_2)
					\\
					y = y'\ \tilde{\gamma}\text{-a.e.}
				\end{array}
			}
			\int_{X\times Y\times X \times Y}
			|(x,y) - (x', y')|^2 \dd \tilde{\gamma}(x,y,x',y'),
			\label{eq:wassF2}
		\end{align}
	\end{lemma}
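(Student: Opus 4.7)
The plan is to establish the chain $\wassF(\rho_1,\rho_2)^2 = \eqref{eq:wassF1} = \eqref{eq:wassF2}$ via two independent arguments: a direct change of variables for the second equality, and a disintegration-plus-measurable-selection argument for the first.

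To identify \eqref{eq:wassF1} with \eqref{eq:wassF2} I would use the map $F\colon X\times X\times Y\to (X\times Y)^2$, $F(x,x',y)\assign(x,y,x',y)$, together with its partial inverse $G(x,y,x',y')\assign(x,x',y)$ defined on $\{y=y'\}$. Given $\gamma$ admissible for \eqref{eq:wassF1}, the pushforward $\tilde{\gamma}\assign F_\#\gamma$ is supported in $\{y=y'\}$, belongs to $\Gamma(\rho_1,\rho_2)$ since its $(X\times Y)$-marginals coincide with $\proj_{13}\gamma=\rho_1$ and $\proj_{23}\gamma=\rho_2$, and satisfies $|(x,y)-(x',y')|^2=|x-x'|^2$ on its support so costs are preserved. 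Conversely, $G_\#\tilde{\gamma}$ produces an admissible $\gamma$ for \eqref{eq:wassF1} with matching cost from any $\tilde{\gamma}$ admissible in \eqref{eq:wassF2}. Since these pushforwards are mutually inverse, they induce a cost-preserving bijection between the feasible sets, and the infima coincide.

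For the central identity $\wassF(\rho_1,\rho_2)^2=\eqref{eq:wassF1}$ I would observe first that admissibility in \eqref{eq:wassF1} forces $\proj_3\gamma=\proj_2\rho_1=\nu$. Disintegrating $\gamma$ against $\nu$ yields a $\nu$-a.e.\ defined family $\gamma_y\in\Gamma(\rho_{1,y},\rho_{2,y})$, and Fubini yields
\[
\int_{X\times X\times Y}|x-x'|^2\dd\gamma = \int_Y\int_{X\times X}|x-x'|^2\dd\gamma_y\dd\nu(y)\ge\int_Y\wass(\rho_{1,y},\rho_{2,y})^2\dd\nu(y),
\]
which is the ``$\ge$'' direction. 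For equality I would select a $\nu$-measurable family $y\mapsto\gamma_y^*$ of fiber-wise optimal plans and glue them into $\gamma^*\assign\int_Y\gamma_y^*\dd\nu(y)\in\measp(X\times X\times Y)$; by construction $\proj_{13}\gamma^*=\rho_1$, $\proj_{23}\gamma^*=\rho_2$, and the total cost of $\gamma^*$ realizes $\wassF(\rho_1,\rho_2)^2$.

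The delicate point, and the main technical obstacle, is the $\nu$-measurable selection of $y\mapsto\gamma_y^*$, and this is precisely where the absolute continuity hypothesis enters: if $\rho_1$ (or $\rho_2$) is $\Lebesgue\otimes\nu$-absolutely continuous, then $\rho_{1,y}$ is $\Lebesgue$-absolutely continuous for $\nu$-a.e.\ $y$, so Brenier's theorem guarantees that $\gamma_y^*$ is unique and of Monge form. Uniqueness of the argmin combined with the closed-graph property of the optimal-transport correspondence (stability under weak convergence of marginals) renders $y\mapsto\gamma_y^*$ Borel measurable by a standard selection argument, so the glued measure $\gamma^*$ is well-defined. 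This step is classical in the parametric optimal-transport literature, and without the absolute continuity assumption one would need to work with multivalued selections or with Kantorovich potentials directly.
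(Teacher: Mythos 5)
Your proposal is correct and follows essentially the same route as the paper: the two formulations \eqref{eq:wassF1} and \eqref{eq:wassF2} are identified by the same pair of pushforward maps, the lower bound comes from disintegrating an arbitrary feasible $\gamma$ against $\nu$, and the upper bound comes from gluing the fiber-wise optimal plans, with the absolute continuity hypothesis used exactly as in the paper to get uniqueness (hence measurability, via stability of optimal plans) of $y\mapsto\gamma_y^*$. No gaps.
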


	\begin{remark}
		\label{remark:order-variables}
		Even though the different order in the variables of $\gamma$ and $\tilde{\gamma}$ may appear confusing at first, it answers to the different interpretation we attach to each formulation.
		Equation \eqref{eq:wassF1} should be interpreted in the sense that the $\wass$-optimal transport plan in each fiber (that we may denote as $\gamma_y$) can be wrapped into a joint measure $\gamma$.
		On the other hand, \eqref{eq:wassF2} exposes the relation between $\wassF$ and the Wasserstein distance on $\prob(X\times Y)$, being the former a more constrained version of the latter (and therefore yielding a higher score). 
	\end{remark}

	\begin{remark}
		\label{remark:selection}
		We make the density assumption in Lemma \ref{lemma:equivalence-formulation-wassF} for simplicity to avoid issues with measurability. Alternatively, one could employ a measurable selection theorem.
	\end{remark}

	\begin{proof}[Proof of Lemma \ref{lemma:equivalence-formulation-wassF}]
		For any feasible plan $\gamma$ in \eqref{eq:wassF1} one can define a feasible $\tilde{\gamma}$ in \eqref{eq:wassF2} with the same score as the pushforward of $\gamma$ by the map:
		\begin{equation}
			(x,x',y) \mapsto (x,y,x',y).
		\end{equation}
		Viceversa, for any feasible $\tilde{\gamma}$ in  \eqref{eq:wassF2}, its pushforward by  
		\begin{equation}
		(x,y,x',y') \mapsto (x,x',y).
		\end{equation}
		yields a feasible plan for \eqref{eq:wassF1}.
		For each $y\in Y$ let $\rho_{1,y}$ and $\rho_{2,y}$ denote respectively the $y$-disintegration of $\rho_1$ and $\rho_2$ w.r.t $\nu$. 
		Since either $\rho_{1}$ or $\rho_{2}$ have a density, the $L^2$-optimal transport plan between $\rho_{1,y}$ and $\rho_{2,y}$ is unique and a weak* continuous function of the marginals, and hence measurable. Let us denote such optimal plan by $\gamma_y^*$.
		Then one can define a candidate $\gamma^*$ for \eqref{eq:wassF1} by the measure acting as: 
		\begin{equation}
		\int_{X \times X \times Y} \phi(x, x', y)
		\dd \gamma^*(x, x', y)
		\assign 
		\int_Y 
		\left[
		\int_{X \times X}
		\phi(x, x', y)
		\dd \gamma_y^*(x, x')
		\right]
		\dd \nu(y).
		\end{equation}
		One can easily check that with this choice the right hand side of \eqref{eq:wassF1} equals the left hand side; therefore: 
		\begin{align}
			\wassF (\rho_1, \rho_2)^2 
			& \ge 
			\inf_{
				\scriptsize
				\begin{array}{c}
					\gamma\in\measp(X\times X \times Y)
					\\
					\proj_{13} \gamma = \rho_1, \proj_{23} \gamma = \rho_2
				\end{array}
			}
			\int
			|x-x'|^2 \dd \gamma(x,x', y).
		\end{align}
	
		For the reversed inequality, note that for any $\gamma$ feasible for the right hand side of \eqref{eq:wassF1}, it holds $\proj_3 \gamma = \nu$, and the disintegration of $\gamma$ with respect to $\nu$ (denoted again by  $\gamma_y$) provides a feasible transport plan between $\rho_{1,y}$ and $\rho_{2,y}$. 
		This means that, for $\nu$-a.e. $y\in Y$,
		\begin{equation}
			\wass(\rho_{1,y}, \rho_{2,y})^2
			\le 
			\int_{X\times X}
			|x-x'|^2 \dd \gamma_y(x,x'),
		\end{equation}
		so that integrating in $Y$ and taking the infimum yields the remaining inequality.
	\end{proof}

	 Unfortunately, the metric $\wassF$ does not turn the space of probability measures on $X\times Y$ into a compact metric space. The reason is that when $\nu$ does not have finite support, it is possible to build a sequence of measures on $X\times Y$ that are increasingly oscillating in the horizontal direction, so that the resulting sequence has no Cauchy subsequence (cf. \cite[Example 3.15]{wass-fiber}). As a consequence, the initial step of extracting a cluster point of the discrete trajectories of a gradient flow ---which for the $\wass$ metric is straighforward ---becomes an obstacle in the case of the $\wassF$ metric. 
	
	Nevertheless, $\wassF$ bounds the $\wass$ metric on the product space $X\times Y$ (see Remark \ref{remark:order-variables} and \cite[Proposition 3.10]{wass-fiber}, and therefore we will still be able to extract cluster points with respect to the standard Wasserstein distance on $X\times Y$. Adding this remark to \eqref{eq:order-wass} yields: 
	\begin{lemma}
		\label{lemma:order-wass}
		For $\rho_1, \rho_2$ probability measures on $X\times Y$ with identical $Y$-marginal it holds:
		\begin{equation}
		\wass_1(\rho_1, \rho_2) 
		\le
		\wass_2(\rho_1, \rho_2)
		\le 
		\wassF(\rho_1, \rho_2).
		\label{eq:order-wassF}
		\end{equation}
	\end{lemma}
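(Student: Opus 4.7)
The plan is to treat the two inequalities separately: the first is a direct consequence of material already recalled, and the second follows from the reformulation of $\wassF$ in the preceding lemma.

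For $\wass_1(\rho_1,\rho_2) \le \wass_2(\rho_1,\rho_2)$, nothing needs to be added: this is \eqref{eq:order-wass} applied verbatim with the compact product $X\times Y$ in place of $Z$. The shared second marginal plays no role.

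For $\wass_2(\rho_1,\rho_2) \le \wassF(\rho_1,\rho_2)$, I would invoke the reformulation \eqref{eq:wassF2} from Lemma \ref{lemma:equivalence-formulation-wassF}. That formulation expresses $\wassF(\rho_1,\rho_2)^2$ as an infimum of $\int_{(X\times Y)^2}|(x,y)-(x',y')|^2\dd\tilde{\gamma}$ over those $\tilde{\gamma}\in\Gamma(\rho_1,\rho_2)$ that are additionally concentrated on the diagonal $\{y=y'\}$. Since $\wass_2(\rho_1,\rho_2)^2$ is the infimum of the same integrand over the unrestricted set $\Gamma(\rho_1,\rho_2)$, enlarging the feasible set can only decrease the value, which yields $\wass_2^2\le \wassF^2$ immediately. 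No comparison of the cost functions is needed because \eqref{eq:wassF2} already uses the standard squared distance on $X\times Y$ rather than the reduced $|x-x'|^2$.

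The only technical caveat is that Lemma \ref{lemma:equivalence-formulation-wassF} was stated under an $\Lebesgue\otimes\nu$-absolute continuity hypothesis which is not imposed in Lemma \ref{lemma:order-wass}. The slightly delicate point is therefore to produce the fibrewise gluing without that hypothesis. I would handle it either by applying a measurable selection theorem to $y\mapsto \gamma_y^*$ (the optimal $\wass$-plan between the disintegrations $\rho_{1,y}$ and $\rho_{2,y}$) as indicated in Remark \ref{remark:selection}, or by approximation: approximate $\rho_1$ and $\rho_2$ by $\Lebesgue\otimes\nu$-absolutely continuous couplings with the same $Y$-marginal, apply the lemma to the approximants, and pass to the limit using weak-$*$ lower semicontinuity of $\wass_2$ together with Fatou's lemma applied to $y\mapsto \wass(\rho_{1,y},\rho_{2,y})^2$. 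Alternatively one may cite \cite[Proposition 3.10]{wass-fiber} directly, as the authors already do in the paragraph preceding the lemma.
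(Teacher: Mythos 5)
Your proposal is correct and matches the paper's own (implicit) argument: the first inequality is \eqref{eq:order-wass} applied on $Z=X\times Y$, and the second follows because \eqref{eq:wassF2} realizes $\wassF^2$ as the same infimum as $\wass_2^2$ over a smaller feasible set, with \cite[Proposition 3.10]{wass-fiber} covering the general (not necessarily absolutely continuous) case. Your attention to the absolute-continuity caveat in Lemma \ref{lemma:equivalence-formulation-wassF} is a welcome extra precaution that the paper resolves by the same citation.
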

	We will write $\rho \ll \Lebesgue \otimes \nu$ for a coupling $\rho\in \Gamma(\mu, \nu)$ that is absolutely continuous with respect to the measure $\Lebesgue \otimes \nu$. 
	This grants the existence of an $L^1$ density or \textit{Radon-Nykodim derivative}, that we will typically denote by $\rhod$. The following Lemma collects some useful calculus rules for absolutely continuous couplings: 
	
	\begin{lemma}
		\label{lemma:disintegrations-have-density}
		Let $\rho\in \Gamma(\mu, \nu)$ with $\mu \ll \dd x$ and $\rho \ll \Lebesgue \otimes \nu$. Denote by $r$ the density of $\rho$ with respect to $\Lebesgue \otimes \nu$. 
		Then: 
		\begin{enumerate}[label=\roman*)]
			\item For a.e.~$x\in X$ it holds: 
			\begin{equation}
				\int_Y \rhod(x,y) \dd \nu(y) = \mu(x),
			\end{equation}
			and the disintegration of $\rho$ w.r.t. its first marginal at $x$ is given by the measure $\tfrac{\rhod(x,\cdot)}{\mu(x)} \nu$.
			\item For $\nu$-a.e.~$y\in Y$ it holds: 
			\begin{equation}
			\int_X \rhod(x,y) \dd x = 1,
			\end{equation}
			and the disintegration of $\rho$ w.r.t. its second marginal at $y$ has a density given by $\rhod(\cdot, y)$.
		\end{enumerate}
	\end{lemma}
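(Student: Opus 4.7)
The plan is to derive both statements directly from Fubini's theorem and the defining property of the density $r$. Both parts have the same structure: use the marginal constraint to identify the integrand, then use uniqueness of disintegration (or equivalently, exhibit a candidate disintegration that verifies the defining integral identity).

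For part (i), I would first test the identity $\proj_1\rho = \mu$ against an arbitrary bounded measurable $\phi\in C(X)$:
\begin{equation*}
\int_X \phi(x)\,\mu(x)\dd x
= \int_{X\times Y}\phi(x)\dd\rho(x,y)
= \int_X \phi(x)\left[\int_Y r(x,y)\dd\nu(y)\right]\dd x,
\end{equation*}
where the second equality uses $\rho = r\cdot(\Lebesgue\otimes\nu)$ together with Fubini (the density $r\in L^1(X\times Y,\Lebesgue\otimes\nu)$ makes the exchange legitimate). Since $\phi$ is arbitrary, the integrands agree for a.e.\ $x\in X$, giving the first claim. For the disintegration identity, I would exhibit $\rho_x \assign \tfrac{r(x,\cdot)}{\mu(x)}\nu$ (well-defined $\mu$-a.e., since $\mu(x)>0$ on a set of full $\mu$-measure) as a candidate, and check that for every $\psi\in C(X\times Y)$
\begin{equation*}
\int_{X\times Y}\psi\dd\rho
= \int_{X\times Y}\psi(x,y)\,r(x,y)\dd x\dd\nu(y)
= \int_X\left[\int_Y \psi(x,y)\tfrac{r(x,y)}{\mu(x)}\dd\nu(y)\right]\mu(x)\dd x.
\end{equation*}
By uniqueness of disintegration $\mu$-a.e., this identifies $\tfrac{r(x,\cdot)}{\mu(x)}\nu$ as the disintegration at $x$.

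Part (ii) is entirely analogous, testing $\proj_2\rho = \nu$ against $\phi\in C(Y)$ to obtain $\int_X r(x,y)\dd x = 1$ for $\nu$-a.e.\ $y$, and then verifying that $r(\cdot,y)\dd x$ defines the disintegration at $y$ by the same Fubini rewriting
\begin{equation*}
\int_{X\times Y}\psi\dd\rho
= \int_Y\left[\int_X \psi(x,y)\,r(x,y)\dd x\right]\dd\nu(y).
\end{equation*}

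There is no real obstacle here: the lemma is essentially a bookkeeping statement that the density $r$ with respect to the product reference measure induces, by slicing, both conditional densities. The only mild point to mention is that the exceptional Lebesgue-null set where $\mu(x)=0$ is harmless in (i) because the disintegration only needs to be defined $\mu$-a.e., and in (ii) the exceptional $\nu$-null set of $y$'s for which $\int_X r(x,y)\dd x\neq 1$ is likewise negligible for the disintegration identity.
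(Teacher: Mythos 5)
Your proof is correct and follows essentially the same route as the paper: test the marginal constraint against a continuous function on $X$ (resp.\ $Y$), use Fubini to identify the integrand, and then verify the candidate disintegration against test functions in $C(X\times Y)$. The remarks about the exceptional null sets are a harmless addition that the paper leaves implicit.
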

\begin{proof}
	i) For a continuous test function $\xi \in C(X)$,
	\begin{align*}
		\int_X \xi(x)
		\left[
		\int_Y \rhod(x,y) \dd \nu(y)
		\right]
		\dd x
		&=
		\intXY \xi(x) \rhod(x,y) \dd x \dd \nu(y)
		=
		\intXY \xi(x) \dd \rho(x,y)
		\\
		&=
		\intX \xi(x) \dd \mu(x)
		=
		\intX \xi(x) \mu(x) \dd x,
	\end{align*}
	and it is straightforward to verify that the disintegration of $\rho$ with respect to $\mu$ at $x\in X$ is given by $\tfrac{\rhod(x,\cdot)}{\mu(x)} \nu$; indeed, for any $\psi\in C(X\times Y)$,
	
	\begin{align*}
		\int_X 
		\left[
		\int_Y \psi(x,y)\frac{\rhod(x,y)}{\mu(x)} \dd \nu(y)
		\right] \dd \mu(x)
		&=
		\int_X 
		\left[
		\int_Y \psi(x,y)\frac{\rhod(x,y)}{\mu(x)} \dd \nu(y)
		\right]\mu(x) \dd x
		\\
		&=
		\intXY \psi(x,y) 
		\rhod(x,y) \dd x \dd \nu(y)
		=
		\intXY \psi(x,y) 
		 \dd \rho(x,y).
	\end{align*}
	An analogous computation yields ii).
\end{proof}

\begin{remark}
	In view of Lemma \ref{lemma:disintegrations-have-density}, if $\rho_1, \rho_2 \in \Gamma(\mu, \nu)$ are absolutely continuous w.r.t.~$\Lebesgue \otimes \nu$ with respective densities $\rhod_1$ and $\rhod_2$, their fibered Wasserstein distance may be denoted as: 
	
	\begin{equation}
		\wassF (\rho_1, \rho_2)^2 
		:=
		\int_Y
		\wass(\rhod_1(\cdot, y), \rhod_2(\cdot, y))^2 \dd \nu(y).
		\end{equation}
		\end{remark}
	
	The following lemma collects some useful facts about the structure of the minimizers of the energy $\nrg$; the proof can be found in \cite[Prop 2.5, (iii)]{DomDec} and references therein.
	The assumptions are not minimal, but adequate for our matter of study.

	\begin{lemma}[Minimizers of $\nrg$] Assume that $V\in C(X\times Y)$, and let $\mu \ll \Lebesgue$. Then:
		\label{lemma:entropic-minimizers}
		\begin{enumerate}
			\item $\nrg$ has a unique minimizer $\rho_* \in \adm$.
			\item There exist measurable $\Pi_* : X \to \R$ and $\Psi_* : Y \to \R$ such that
			\begin{equation}
			\rho_* = \exp\left(\frac{\Pi_* \oplus \Psi_* - V}{\kappa}\right)
			\Lebesgue \otimes \nu
			\end{equation} 
			\item Conversely, if $\Pi:X\to \R$ and $\Psi:Y\to\R$ are such that $\rho := \exp\left(\tfrac{\Pi \oplus \Psi - V}{\kappa}\right)\Lebesgue \otimes \nu$ belongs to $\adm$, then $\rho$ is the unique minimizer of $\nrg$.
		\end{enumerate}
	\end{lemma}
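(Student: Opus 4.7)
For (1), I would apply the direct method. The admissible set $\adm$ is weak-$*$ compact on the compact product $X\times Y$ and contains the independent coupling $\mu \otimes \nu$, whose density $(x,y)\mapsto \mu(x)$ with respect to $\Lebesgue \otimes \nu$ is bounded, so $\inf \nrg < \infty$. Since $V \in C(X\times Y)$ is bounded, $\rho\mapsto \int V \dd\rho$ is weak-$*$ continuous, and $\entr(\cdot\mid\Lebesgue\otimes\nu)$ is weak-$*$ lower semicontinuous because $h(t)=t\log t - t + 1$ is convex and superlinear. Hence $\nrg$ attains its infimum on $\adm$. Strict convexity of $h$ forces the minimizer $\rho_*$ to be absolutely continuous (any singular part contributes $+\infty$ to the entropy) and to be unique.

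For (2), I would invoke entropic duality. The dual problem reads
\[
\sup_{\Phi \in C(X),\,\Psi\in C(Y)} \int_X \Phi\dd\mu + \int_Y \Psi\dd\nu - \kappa\int_{X\times Y} e^{(\Phi\oplus\Psi - V)/\kappa}\dd x\dd\nu(y),
\]
and a maximizing pair $(\Pi_*,\Psi_*)$ can be produced either via a Sinkhorn fixed-point argument (the softmin operators $\Phi\mapsto -\kappa\log\int_Y e^{(\Psi-V)/\kappa}\dd\nu$ and its symmetric counterpart are contractions in the Hilbert projective metric on bounded measurable functions, yielding bounded measurable fixed points) or by direct compactness of normalized potentials. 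Under our assumptions $V\in C^2$ is bounded and $\mu$ is bounded above and away from zero, so the potentials are automatically in $L^\infty$. Inspecting the first-order optimality condition---equivalently, using the Young inequality $rs \le h(r) + e^s - 1$ with equality iff $s=\log r$ and pairing it with any competitor in $\adm$---identifies the primal minimizer as $\rho_* = \exp((\Pi_*\oplus\Psi_* - V)/\kappa)\Lebesgue\otimes\nu$.

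For (3), I would proceed directly by convexity. Given $\rho = \exp((\Pi\oplus\Psi - V)/\kappa)\Lebesgue\otimes\nu \in \adm$ with density $r$, and any competitor $\tilde\rho\in\adm$ with density $\tilde r$ and $\nrg(\tilde\rho) < \infty$, the convex inequality $h(\tilde r) \ge h(r) + \log r\,(\tilde r - r)$ combined with the identity $V + \kappa\log r = \Pi\oplus\Psi$ on $\{r>0\}$ gives
\[
\nrg(\tilde\rho) - \nrg(\rho) \ge \int_{X\times Y}\bigl(\Pi(x)+\Psi(y)\bigr)(\tilde r - r)\dd x\dd\nu(y) = 0,
\]
the last equality since $\tilde\rho$ and $\rho$ share the marginals $\mu$ and $\nu$. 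Hence $\rho$ minimizes $\nrg$, and by (1) it coincides with $\rho_*$.

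The main obstacle I anticipate is in (2): establishing existence, measurability, and $L^\infty$ bounds for the potentials, which are needed both to make sense of the exponential formula pointwise and to justify the integration-by-parts against the marginal constraints in the Young-inequality argument. In our bounded-cost, bounded-marginals setting these bounds come for free from the contraction estimates for the Sinkhorn operators, and measurability of $(\Pi_*,\Psi_*)$ follows since they arise as uniform limits of continuous iterates. Parts (1) and (3) are then essentially routine convexity computations once the potentials from (2) are in hand.
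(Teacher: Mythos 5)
Your outline is essentially correct, but note that the paper does not prove this lemma at all: it defers entirely to the literature (``the proof can be found in \cite[Prop 2.5, (iii)]{DomDec} and references therein''), so what you have written is a reconstruction of the standard Schr\"odinger-problem/entropic-OT argument that underlies that citation rather than an alternative to anything in the paper. Within that standard route, two points deserve care. First, your Sinkhorn update is written for the reference measure $\mu\otimes\nu$, whereas the entropy \eqref{eq:entr} and the scaling formula in the lemma are taken relative to $\Lebesgue\otimes\nu$; the correct update enforcing $\int_Y e^{(\Pi\oplus\Psi-V)/\kappa}\dd\nu(y)=\mu(x)$ is $\Pi(x)=\kappa\log\mu(x)-\kappa\log\int_Y e^{(\Psi-V)/\kappa}\dd\nu(y)$, which is harmless only because the paper's standing hypotheses put $\mu\in L^\infty$ and bounded away from zero (these hypotheses are also what make your choice of $\mu\otimes\nu$ as a finite-energy competitor in part (1) legitimate, since the lemma itself only assumes $\mu\ll\Lebesgue$). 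Second, in part (3) the final identity $\int(\Pi\oplus\Psi)(\tilde r-r)\dd x\dd\nu=0$ requires $\Pi\in L^1(X,\mu)$ and $\Psi\in L^1(Y,\nu)$, which is the classically delicate point of the Schr\"odinger problem; here it follows from the marginal equations $\int_X e^{(\Pi+\Psi-V)/\kappa}\dd x=1$ and $\int_Y e^{(\Pi+\Psi-V)/\kappa}\dd\nu=\mu$ together with the boundedness of $V$ and of $\mu$, by an oscillation argument of the same kind the paper uses to bound $\rhod_\tau^n$ in Lemma \ref{lemma:bound-X-derivative-rho}, but it should be stated rather than taken for granted. With those two points made explicit, parts (1) and (3) are complete convexity arguments and part (2) is the standard fixed-point construction.
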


	During most of this article the marginals $\mu$ and $\nu$ will be fixed, and therefore the energy \eqref{eq:energy} and entropy \eqref{eq:entr} are defined without ambiguity. However, for some results (mainly Lemma \ref{lemma:stability-weak-solutions}) we will need to consider sequences $(\mu_n)_n$ and $(\nu_n)_n$ of marginals and respective couplings $\rho_n \in \Gamma(\mu_n, \nu_n)$. In that case it becomes practical to make the dependence of the energy in the marginals explicit, and hence define: 
	\begin{align}
		\label{eq:energy-mid}
		\nrg(\rho \mid \mu, \nu) 
		\assign  
		\int_{X\times Y} V(x,y) \dd \rho(x,y) + \kappa\entr(\rho\mid  \nu) +\indic_{\Gamma(\mu, \nu)}(\rho).
	\end{align}
	 The following Lemma will prove useful: 
	\begin{lemma}
		\label{lemma:energy-lsc}
		The marginal-dependent energy \eqref{eq:energy-mid} is lower-semicontinuous under joint weak* convergence of its arguments. 
	\end{lemma}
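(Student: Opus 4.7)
The plan is to split $\nrg(\rho\mid\mu,\nu)$ into its three constituents and argue lower-semicontinuity of each separately. Let $(\rho_n,\mu_n,\nu_n)\weakto(\rho,\mu,\nu)$ and pass to a subsequence that realises the $\liminf$; I may assume $\rho_n\in\Gamma(\mu_n,\nu_n)$ for all $n$, since otherwise the energy is $+\infty$ infinitely often and the inequality is trivial. The potential term $\rho\mapsto\intXY V\dd\rho$ is weak-$*$ continuous because $V\in C(X\times Y)$ is bounded on the compact set $X\times Y$. The marginal constraint is preserved in the limit because the projections $\proj_1,\proj_2$ are weak-$*$ continuous (test against functions of the form $\xi(x)$, resp.\ $\xi(y)$), so $\mu_n=\proj_1\rho_n\weakto\proj_1\rho$ and $\nu_n=\proj_2\rho_n\weakto\proj_2\rho$ force $\rho\in\Gamma(\mu,\nu)$, and the indicator term vanishes in the limit.

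The core step, and the main obstacle, is the joint lower-semicontinuity of $\entr(\rho\mid\nu)=\KL(\rho\,|\,\Lebesgue\otimes\nu)$ under joint weak-$*$ convergence of $(\rho,\nu)$. A direct argument on the densities $\rhod_n=\RadNikD{\rho_n}{\Lebesgue\otimes\nu_n}$ is awkward because they live in $L^1$ spaces that change with $n$. I would sidestep this by invoking a Donsker--Varadhan type dual representation, valid for arbitrary positive finite reference measures via the pointwise Legendre duality $s\log s-s+1=\sup_{u\in\R}\{us-(e^u-1)\}$:
\begin{equation*}
\KL(\alpha\mid\beta)=\sup_{\phi\in C(X\times Y)}\left\{\intXY \phi\dd\alpha-\intXY(e^{\phi}-1)\dd\beta\right\}.
\end{equation*}
For each fixed continuous $\phi$, the functional $(\alpha,\beta)\mapsto\intXY\phi\dd\alpha-\intXY(e^\phi-1)\dd\beta$ is jointly weak-$*$ continuous (both $\phi$ and $e^\phi-1$ are bounded and continuous on the compact $X\times Y$), so the supremum is jointly weak-$*$ lower-semicontinuous. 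It remains to note that $\nu_n\weakto\nu$ implies $\Lebesgue\otimes\nu_n\weakto\Lebesgue\otimes\nu$: for any $f\in C(X\times Y)$, by compactness of $X$, the map $y\mapsto\int_X f(x,y)\dd x$ is continuous and bounded on $Y$, so Fubini reduces the test to a continuous test against $\nu_n$.

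Summing the three contributions yields $\liminf_n\nrg(\rho_n\mid\mu_n,\nu_n)\ge\nrg(\rho\mid\mu,\nu)$, which is the desired inequality. I expect the verification of the dual formula for non-probability reference measures (in particular the reduction to pointwise Legendre duality under a measurable selection of $\phi$) to require slightly more care than the classical probabilistic version, but it is by now standard and is the only non-trivial ingredient.
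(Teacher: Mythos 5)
Your proof is correct and follows the same route as the paper: the same three-way decomposition, the same weak-$*$ continuity argument for the potential term and for the marginal projections handling the indicator. The only difference is that where the paper cites \cite{SavareGradientFlows} for the joint lower-semicontinuity of the relative entropy, you supply the standard Legendre/Donsker--Varadhan duality argument (together with the needed observation that $\nu_n\weakto\nu$ implies $\Lebesgue\otimes\nu_n\weakto\Lebesgue\otimes\nu$), which is essentially the proof given in that reference.
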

	\begin{proof}
		Let $(\mu_n)_n$ be a sequence in $\prob(X)$ converging weak* to $\mu$, and analogously $(\nu_n)_n$ be a sequence in $\prob(Y)$ converging weak* to $\nu$. Let $\rho_n\in \prob(X\times Y)$ for all $n$ converge to $\rho\in \prob(X\times Y)$.
		
		$\nrg(\rho \mid \mu, \nu)$ is a sum of three contributions. The first one is simply integration with respect to a fixed continuous function so it is continuous with respect to weak* convergence of $\rho_n$.
		The second entropic contribution is lower-semicontinuous with respect to joint weak* convergence of its first and second argument \cite{SavareGradientFlows}, as is the case here. 
		For the convex indicator function note the following: if $\rho_n \notin \Gamma(\mu_n, \nu_n)$ for all $n$, then $\nrg(\rho_n \mid \mu_n, \nu_n) = +\infty$ and there is nothing to prove. If however $\rho_n \in \Gamma(\mu_n, \nu_n)$ along some subsequence, then also $\rho \in \Gamma(\mu, \nu)$ by weak* continuity of the marginal projection operators. Therefore, the last contribution is also jointly weak* lower-semicontinuous with respect to its arguments.
	\end{proof}	
	
	\section{Discrete minimizing movements}
	\label{sec:minimizing-movement}
	We will construct an approximate solution via minimizing movements. To that end, fix a time step $\tau>0$ and define, for any given $\bar\rho\in\adm$,
	\begin{align}
	\label{eq:JKO-scheme}
	\nrg_\tau\big(\rho\mid \bar\rho\big)
	&:= \frac1{2\tau}\wassF(\rho, \bar{\rho})^2 + \nrg(\rho).
	\end{align}
	Starting from $\rho_\tau^0:=\rho^0$, define $\rho_\tau^n$ for $n=1,2,\ldots$ inductively as minimizer of the respective
	\mbox{$\nrg_\tau(\cdot\mid\rho_\tau^{n-1})$}.
	We will denote by $\rhod_\tau^n$ the density of $\rho^n_\tau$ with respect to $\Lebesgue \otimes \nu$.
	\begin{lemma}
		\label{lem:steppositive}
		The minimizing movement scheme \eqref{eq:JKO-scheme} is well-defined and yields a unique minimizer. 
		Moreover, $r_\tau^n>0$ for $\mu\otimes\nu$-almost every $(x,y)\in X\times Y$ and each $n\in \mathbb{N}$.
	\end{lemma}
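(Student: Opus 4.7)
My plan is to treat the three assertions separately by standard variational arguments; the only non-standard ingredient is the weak*-lower semicontinuity of the fibered distance, which I regard as the main obstacle.

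\emph{Existence.} I would apply the direct method to a minimizing sequence $(\rho_n)_n \subset \Gamma(\mu,\nu)$. Weak*-compactness of $\prob(X\times Y)$, inherited from compactness of $X\times Y$, together with weak*-continuity of the marginal projections, yields a cluster point $\rho_* \in \Gamma(\mu,\nu)$. The term $\int V\,\dd\rho$ is weak*-continuous, being integration of a continuous function, and joint lower semicontinuity of $\kappa\entr + \indic_{\Gamma(\mu,\nu)}$ is Lemma \ref{lemma:energy-lsc}. For $\wassF^2(\cdot,\bar\rho)$ I would use the three-marginal formulation \eqref{eq:wassF1}: the energy bound forces $\entr(\rho_n) < \infty$, hence $\rho_n \ll \Lebesgue\otimes\nu$, so Lemma \ref{lemma:equivalence-formulation-wassF} applies. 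Extracting a weak*-limit $\gamma$ of the associated optimal three-marginal plans $\gamma_n \in \measp(X\times X\times Y)$ on the compact product space, and noting that the conditions $\proj_{13}\gamma_n = \rho_n$ and $\proj_{23}\gamma_n = \bar\rho$ pass to the limit, produces a competitor $\gamma$ for $(\rho_*, \bar\rho)$ satisfying $\int|x-x'|^2\dd\gamma = \liminf\wassF^2(\rho_n,\bar\rho)$, which is the desired lower semicontinuity.

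\emph{Uniqueness.} The feasible set $\Gamma(\mu,\nu)$ and each summand of $\nrg_\tau$ are convex in $\rho$; in particular, convexity of $\wassF^2(\cdot,\bar\rho)$ follows from forming convex combinations of three-marginal plans in \eqref{eq:wassF1}. Strict convexity of $h(s):=s\log s - s + 1$ renders $\entr$ strictly convex, hence so is $\nrg_\tau(\cdot\mid\bar\rho)$, and the minimizer is unique.

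\emph{Positivity.} Assume, for contradiction, that $A := \{\rhod_\tau^n = 0\}$ has positive $\Lebesgue\otimes\nu$-measure. Using $\tilde\rho := \mu\otimes\nu \in \Gamma(\mu,\nu)$, with density $\tilde\rhod(x,y)=\mu(x)$ bounded above and below away from zero by hypothesis, as a perturbation direction, I form $\rho_\veps := (1-\veps)\rho_\tau^n + \veps\tilde\rho \in \Gamma(\mu,\nu)$. The $V$-term varies by $O(\veps)$, convexity of $\wassF^2(\cdot,\bar\rho)$ bounds the increment $\wassF^2(\rho_\veps,\bar\rho) - \wassF^2(\rho_\tau^n,\bar\rho)$ by $\veps\bigl[\wassF^2(\tilde\rho,\bar\rho) - \wassF^2(\rho_\tau^n,\bar\rho)\bigr] = O(\veps)$, and convexity of $h$ bounds the entropy increment over $A^c$ also by $O(\veps)$. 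The entropy contribution over $A$, recalling $h(0)=1$ and $\rhod_\tau^n=0$ on $A$, evaluates to
\[
\int_A \bigl[h(\veps\tilde\rhod) - 1\bigr] \dd x\,\dd\nu(y) = \veps\log\veps\int_A \tilde\rhod\,\dd x\,\dd\nu(y) + O(\veps).
\]
Since $\int_A \tilde\rhod > 0$ and $|\veps\log\veps|/\veps \to \infty$ as $\veps\to 0^+$, this negative contribution eventually dominates all $O(\veps)$ terms, giving $\nrg_\tau(\rho_\veps\mid\bar\rho) < \nrg_\tau(\rho_\tau^n\mid\bar\rho)$ for $\veps$ small enough and contradicting the minimality of $\rho_\tau^n$. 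The main technical subtlety, as indicated above, is the lower semicontinuity of $\wassF^2(\cdot,\bar\rho)$; the three-marginal formulation \eqref{eq:wassF1} is exactly the tool that reduces it to a standard weak*-compactness argument on the compact space $X\times X\times Y$.
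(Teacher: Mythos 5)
Your proof is correct and follows essentially the same route as the paper: existence and uniqueness via the three-marginal reformulation of $\wassF$ (the paper recasts the whole step as a convex problem on $\measp(X\times X\times Y)$ and applies the direct method there, while you apply it in $\prob(X\times Y)$ and invoke the three-marginal plans only to get lower semicontinuity and convexity of $\wassF^2(\cdot,\bar\rho)$ — an equivalent use of the same tool), and positivity via the linear perturbation toward $\mu\otimes\nu$. For positivity the paper phrases the argument as an upper derivative at $\veps=0$ being $-\infty$ when $\{\rhod_\tau^n=0\}$ has positive measure, whereas you estimate the increment directly and let the $\veps\log\veps$ term on the vanishing set dominate the $O(\veps)$ contributions; the two computations are interchangeable.
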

	\begin{proof}
		We will show existence of minimizers of \eqref{eq:JKO-scheme} by constructing an auxiliary convex problem. 
		First let us show that the minimizing movement step \eqref{eq:JKO-scheme} can be equivalently written as:
		\begin{equation}
		\label{eq:primal}
		\begin{aligned}
		&\rho^n = \proj_{13}\gamma^n, 
		\quad
		\text{with}
		\\
		&\gamma^n
		=
		\argmin_{
			\scriptsize
			\begin{array}{c}
			\gamma\in\measp(X\times X \times Y)
			\\
			\proj_{1} \gamma = \mu, 
			\proj_{23} \gamma = \rho^{n-1}
			\end{array}
		}
		\int_{X\times X \times Y}
		\left(
		\frac{|x-x'|^2}{2\tau} + V(x,y) 
		\right)
		\dd \gamma(x,x',y)      
		+ \kappa
		\entr(\proj_{13} \gamma).
		\end{aligned}
		\end{equation}
		Intuitively, $\gamma$ represents a collection of transport plans on $(\gamma_y)_y \subset \meas_+(X\times X)$ indexed by the variable $Y$, the constraint $\proj_{23} \gamma = \rho^{n-1}$ grants that the second marginal of $\gamma_y$ is $\rho^{n-1}_y$, and the constraint $\proj_1 \gamma$ ensures that the new iterate $\rho^n$ has the right first marginal prescribed by $\mu$; the second marginal is automatically satisfied since no transport happens along the $Y$-axis and it is therefore inherited from $\rho^{n-1}$.
		Rewriting \eqref{eq:JKO-scheme} in this form will allow us to use the direct method of calculus of variations on the space $\measp(X\times X \times Y)$, simplifying the analysis.
		
		The proof of equivalence is very similar to that in Lemma \ref{lemma:equivalence-formulation-wassF}.
		First note that any feasible candidate $\rho$ in \eqref{eq:JKO-scheme} must be absolutely continuous with respect to $\Lebesgue \otimes \nu$, otherwise the entropy term would yield an infinite value. 
		Likewise, any feasible plan $\gamma$ in \eqref{eq:primal} must have a $\Lebesgue\otimes \nu$-absolutely continuous projection $\proj_{13}\gamma$.
		Now, for $\rho$ a feasible candidate for \eqref{eq:JKO-scheme}, and $\nu$-a.e.~$y\in Y$, define $\gamma_y$ as the $\wass$-optimal transport plan between the $y$-disintegration of $\rho$ and $\rho^{n-1}$ w.r.t $\nu$ (denoted respectively by $\rho_y$ and $\rho^{n-1}_y$). 
		By absolute continuity of $\rho$, Brenier's theorem grants uniqueness of such optimal transport plan.
		Then one can define a candidate $\gamma$ for \eqref{eq:primal} by the measure acting as: 
		\begin{equation}
			\int_{X \times X \times Y} \phi(x, x', y)
			\dd \gamma(x, x', y)
			\assign 
			\int_Y 
			\left[
			\int_{X \times X}
			\phi(x, x', y)
			\dd \gamma_y(x, x')
			\right]
			\dd \nu(y).
		\end{equation}
		
		One can easily check that the value $\nrg_\tau(\rho \mid \rho^{n-1})$ is equal to that of the primal objective \eqref{eq:primal} applied to $\gamma$. 
		Conversely, if $\gamma$ is a feasible candidate for the primal problem \eqref{eq:primal}, its score is necessary larger or equal than $\nrg_\tau(\proj_{13}\gamma \mid \rho^{n-1})$, since the $V$ and $\entr$ terms yield the same value, and the transport part is optimal in \eqref{eq:JKO-scheme}.
		We conclude that the minimization of $\nrg_\tau(\cdot \mid \rho^{n-1})$ is equivalent to the convex problem \eqref{eq:primal} (i.e.~they share the same optimal values, and if there are minimizers for one of them we showed how to construct a minimizer for the other).
		
		Next, all the terms constituting the functional in \eqref{eq:primal} are lower-semicontinuous in $\gamma$. By the marginal constraints any admissible $\gamma$ lies in $\prob(X\times X \times Y)$, which is pre-compact under the weak* topology (by compactness of $X$ and $Y$). Therefore the direct method in the calculus of variations yields existence of a minimizer $\gamma^n$.
		Besides, the strict convexity of the entropy $\entr$ guarantees that $\rho^n  \assign \proj_{13} \gamma^n$ is unique, which yields a unique minimizer for $\nrg_\tau(\cdot \mid \rho^{n-1})$.
		
		To show positivity of the newly obtained $\rho^n$, let $\veps \in (0,1)$ and consider the feasible coupling $\rho_\veps \assign (1-\veps)\rho^n + \veps \mu\otimes \nu$. We will show that if $\rho^n$ were to vanish in a set $A\subset X\times Y$ with $(\mu\otimes \nu)(A)>0$ then there would exist a value of $\veps>0$ such that $\nrg_\tau(\rho_\veps\mid \rho^{n-1}) < \nrg_\tau(\rho^n\mid \rho^{n-1})$, which would contradict the optimality of $\rho^n$. 
		To that end let us bound the upper derivative of $\nrg_\tau(\rho_\veps\mid \rho^{n-1})$ with respect to $\veps$ (denoted by $\DU \nrg_\tau(\rho_\veps\mid \rho^{n-1})$) at $\veps = 0$. First, denote by $\gamma$  (resp.~$\gamma_\otimes$) 
        a feasible transport plan between $\rho^n$ (resp.~$\mu\otimes \nu$) and $\rho^{n-1}$ for problem \eqref{eq:wassF1}.
        Then one can build a $\wassF$-feasible transport plan between $\rho_\veps$ and $\rho^{n-1}$ as $\gamma_\veps \assign (1-\veps)\gamma + \veps \gamma_\otimes$. Consequently: 
		\begin{equation}
		\wassF(\rho_\veps, \rho^{n-1})^2
		\le 
		(1-\veps) \wassF(\rho^n, \rho^{n-1})^2 + \veps(\diam X)^2
		\le 
		\wassF(\rho^n, \rho^{n-1})^2 + \veps(\diam X)^2,
		\end{equation}
		so $\DU\wassF(\rho_\veps, \rho^{n-1})^2\big|_{\veps = 0} \le (\diam X)^2$. For the potential term, by linearity: 
		\begin{equation}
		\int_{X \times Y} V \dd \rho_\veps 
		=
		(1-\veps)\int_{X \times Y} V \dd \rho^n 
		+
		\veps\int_{X \times Y} V \dd (\mu\otimes \nu) 
		\le 
		\int_{X \times Y} V \dd \rho^n 
		+
		\veps\|V\|_\infty,		
		\end{equation}
		and thus $\DU \la V , \rho_\veps\ra \big|_{\veps = 0} \le \|V\|_\infty$.
		Finally, for the entropic term, let 
		\mbox{$\entr(\rho) = \intXY f(r) \dd x \dd \nu(y)$}, with $f(s) \assign s \log s - s + 1$.
		$f$ is a convex function with derivative $f'(s) = \log s$.
		Since $f$ is convex, the finite difference $(f(s + h) - f(s)) / h$ converges monotonously to $f'(s)$ as $h\rightarrow 0$. 
		This allows us to use the monotone convergence theorem in the following calculation
		\begin{align*}
		\frac{\dd}{\dd \veps}
		\entr(\rho_\veps) \big|_{\veps = 0}
		&=
		\lim_{\veps \rightarrow 0}
		\frac1\veps (\entr(\rho_\veps)
		- \entr(\rho))
		=
		\lim_{\veps \rightarrow 0}
		\intXY
		\frac1\veps 
		[f(\rhod^n + \veps (\mu - \rhod^n)) - f(\rhod^n)]
		\dd x \dd \nu(y)	
		\\
		&=
		\int_{X \times Y}
		\log(\rhod^n(x,y)) 
		[\mu(x) - \rhod^n(x,y)]
		\dd x \dd \nu(y)
		\\
		&=
		\int_{X \times Y}
		\log(\rhod^n(x,y)) \dd (\mu\otimes \nu)(x,y)
		- \entr(\rho^n) + \intXY (1-\rhod^n(x,y)) \dd x \dd \nu(y)
		\\
		&\le
		\int_{X \times Y}
		\log(\rhod^n(x,y)) \dd (\mu\otimes \nu)(x,y)
		+|X| - 1.
		\end{align*}
		where the last step holds by the positivity of the entropy.
		Thus, defing $C = (\diam X)^2 /(2\tau)+ \|V\|_\infty + |X| - 1 <\infty$ we obtain
		
		\begin{equation}
		\overline{D}_\veps
		\nrg_\tau(\rho_\veps\mid \rho^{n-1}) \big|_{\veps=0}
		\le 
		C 
		+
		\int_{X \times Y}
		\log(\rhod^n(x,y)) \dd (\mu\otimes \nu)(x,y),
		\end{equation}
		which by optimality of $\rho^n$ implies that $\rhod^n$ must be positive ($\mu\otimes\nu)$-almost everywhere.
	\end{proof}
	
	\begin{lemma}
		\label{lem:equicontinuity-rho}
		The iterates satisfy
		\begin{equation}
		\wass(\rho_\tau^m, \rho_\tau^n) 
		\le
		\wassF(\rho_\tau^m, \rho_\tau^n) 
		\le 
		C\sqrt{\tau\,|m-n|}
		\label{eq:equicontinuity-rho}
		\end{equation}
		with $C =\sqrt{2\nrg(\rho^{0})}$.
	\end{lemma}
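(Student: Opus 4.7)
The first inequality is just the content of Lemma \ref{lemma:order-wass}, so the real work lies in the second one. The plan is to run the standard JKO energy estimate, the only small twist being that one must pass through the fibered distance $\wassF$ rather than $\wass$, but since $\wassF$ is itself a metric (as recalled just before Lemma \ref{lemma:order-wass}), the classical argument transfers essentially verbatim.

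First I would test the optimality of $\rho_\tau^{k+1}$ in \eqref{eq:JKO-scheme} against the trivial competitor $\rho_\tau^k\in\Gamma(\mu,\nu)$, for which $\wassF(\rho_\tau^k,\rho_\tau^k)=0$. This yields the one-step dissipation
\begin{equation*}
\frac{1}{2\tau}\wassF(\rho_\tau^{k+1},\rho_\tau^k)^2+\nrg(\rho_\tau^{k+1})\le \nrg(\rho_\tau^k),
\end{equation*}
so in particular $\nrg(\rho_\tau^k)$ is non-increasing in $k$. Summing over $k=n,\dots,m-1$ (assume $m>n$ without loss of generality) and using that $\nrg\ge 0$ (since $V\ge 0$, $\entr\ge 0$, and $\rho_\tau^m\in\Gamma(\mu,\nu)$, so the indicator vanishes), I obtain the telescoping bound
\begin{equation*}
\sum_{k=n}^{m-1}\wassF(\rho_\tau^{k+1},\rho_\tau^k)^2 \le 2\tau\bigl(\nrg(\rho_\tau^n)-\nrg(\rho_\tau^m)\bigr)\le 2\tau\,\nrg(\rho^0).
\end{equation*}

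Second, since $\wassF$ is a metric I can chain the triangle inequality and apply Cauchy--Schwarz in the index $k$:
\begin{equation*}
\wassF(\rho_\tau^m,\rho_\tau^n)\le \sum_{k=n}^{m-1}\wassF(\rho_\tau^{k+1},\rho_\tau^k)\le \sqrt{m-n}\,\sqrt{\sum_{k=n}^{m-1}\wassF(\rho_\tau^{k+1},\rho_\tau^k)^2}\le \sqrt{2\nrg(\rho^0)}\sqrt{\tau(m-n)},
\end{equation*}
which is exactly the asserted bound with $C=\sqrt{2\nrg(\rho^0)}$. Combined with Lemma \ref{lemma:order-wass} this also controls $\wass(\rho_\tau^m,\rho_\tau^n)$.

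There is no real obstacle: the only subtlety worth checking is that the optimal value $\nrg(\rho_\tau^{k+1})$ is indeed finite at each step (so that the dissipation inequality makes sense), but this is a consequence of Lemma \ref{lem:steppositive} together with the fact that $\rho_\tau^k$ itself is an admissible competitor of finite energy. Positivity of $\nrg$ follows from the standing assumption $V\ge 0$ and from $\entr\ge 0$, using the elementary inequality $s\log s-s+1\ge 0$.
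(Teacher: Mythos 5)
Your proof is correct and follows essentially the same route as the paper: the one-step dissipation estimate from testing optimality against the previous iterate, telescoping, and then the triangle inequality for $\wassF$ combined with Cauchy--Schwarz (the paper phrases this last step as Jensen's inequality, which is the same bound). Nothing is missing.
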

	\begin{proof}
		\eqref{eq:equicontinuity-rho} follows from the classical JKO energy estimates applied to \eqref{eq:JKO-scheme}: choosing $\rho^{n-1}_\tau$ as a competitor in \eqref{eq:JKO-scheme}, one gets:
		
		\begin{equation}
		\label{eq:energy-estimate}
		\wassF(\rho_\tau^n, \rho_\tau^{n-1})^2
		\le 
		2\tau (\nrg(\rho_\tau^{n-1}) - \nrg(\rho_\tau^n)).
		\end{equation}
        W.l.o.g.~let $n < m$. Then the triangle inequality for the fiber metric $\wassF$ followed by Jensen's inequality yields:
		\begin{align*}
		\wassF(\rho_\tau^m, \rho_\tau^{n})^2
		&\le 
		\left(
		\sum_{k = n+1}^m
		\wassF(\rho_\tau^k, \rho_\tau^{k-1})
		\right)^{2}
		\le 
		(m-n)
		\sum_{k = n+1}^m
		\wassF(\rho_\tau^k, \rho_\tau^{k-1})^2.
		\\
		\intertext{Finally, using the energy estimate \eqref{eq:energy-estimate}:}
		&
		\le 
		2(m-n) \tau (\nrg(\rho_\tau^{n}) - \nrg(\rho_\tau^m))
		\le 
		2\nrg(\rho^{0})
		\tau (m-n),
		\end{align*}
		and the bound in $\wass$ follows from Lemma \ref{lemma:order-wass}, non-negativity of $\nrg$, and the finiteness of the initial energy.
	\end{proof}

	\begin{lemma}
		\label{lemma:bound-X-derivative-rho}
		For each $n=1,2,\ldots$, there is a map $\Pi_\tau^n \in H^1(X)$ such that, for $\nu$-a.e. $y\in Y$,
		\begin{equation}
		\label{eq:push-forward-JKO}
		\rho^{n-1}_{\tau,y}
		 = \big(\id_X + \tau\nabla_X\big[V(\cdot, y)
		-
		\Pi^n_\tau
		+ \kappa\log\rhod_\tau^n(\cdot, y)
		\big]\big)_\sharp\rho^n_{\tau, y}.
		\end{equation}
		$\Pi^n_\tau$ can be chosen to have zero mean with respect to $\mu$ for all $n$ and $\tau$. Besides, it holds
		\begin{equation}
		\label{eq:velocity}
		\int_{X\times Y}
		|\nabla_X(V - \Pi_\tau^n + \kappa\log \rhod_\tau^n)|^2
		\dd \rho^n_\tau
		=
		\frac{\wassF(\rho^n,\rho^{n-1}\big)^2 }{\tau^2}.
		\end{equation}
		In particular, for any $0 \le k_1	< k_2$,  \begin{equation}
		\label{eq:dissipation}
		\nrg(\rho_\tau^{k_1})
		+
		\sum_{n = k_1+1}^{k_2}
		\tau
		\int_{X\times Y}
		|\nabla_X(V - \Pi_\tau^n + \kappa\log \rhod_\tau^n)|^2
		\dd \rho^n_\tau
		\le 
		\nrg(\rho^0).
		\end{equation}
		
		Moreover, $\rho_\tau^n$ is bounded away from zero and from above with a uniform bound depending on $\kappa$ and $\tau$ but not on $n$. 
		Furthermore, for each $N\in \mathbb{N}$:
		\begin{align}
		\label{eq:L2-bound-phi}
		\tau \sum_{n = 1}^{N} \left( 
            \kappa^2\int_{X\times Y}|\nabla_X\sqrt{r_\tau^n}|^2\dd x\dd\nu(y)
		      +\int_X \big|\nabla\Pi^n_\tau\big|^2 \dd \mu
        \right)
		\le
		4\nrg(\rho^0) + C \tau N,
		\end{align}
		with $C$ a constant depending only on $\|\nabla_X V\|_\infty$, $\kappa$, and the Fisher information of $\mu$.
	\end{lemma}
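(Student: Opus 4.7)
The plan is to first derive the Euler--Lagrange equation for the minimizer $\gamma^n_\tau$ of the convex problem \eqref{eq:primal}, from which both the pushforward relation \eqref{eq:push-forward-JKO} and the characterization of $\Pi^n_\tau$ as a Lagrange multiplier will follow. I would perturb $\gamma^n_\tau$ by the map $(S_\veps, \id, \id)$ on $X\times X\times Y$, where $S_\veps(x,y)=x+\veps\,\xi(x,y)$ and $\xi:X\times Y\to\R^d$ is smooth with $\xi\cdot\nml=0$ on $\partial X$. Such perturbations automatically preserve the constraint $\proj_{23}\gamma=\rho^{n-1}_\tau$, and preserve $\proj_1\gamma=\mu$ to first order precisely when $\int_Y \nabla_X\cdot(\rhod^n_\tau\,\xi)\dd\nu(y)=0$. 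Since Lemma~\ref{lem:steppositive} ensures that $\rho^n_{\tau,y}$ has a positive density for $\nu$-a.e.~$y$, Brenier's theorem applies fibre-wise and supplies optimal transport maps $T^n_y$ with $(T^n_y)_\sharp\rho^n_{\tau,y}=\rho^{n-1}_{\tau,y}$. Computing the first variation of the objective in \eqref{eq:primal} then yields
\[
\intXY\Big[\tfrac{x-T^n_y(x)}{\tau}+\nabla_X V(x,y)+\kappa\,\nabla_X\log\rhod^n_\tau(x,y)\Big]\cdot\xi(x,y)\dd\rho^n_\tau=0
\]
for every admissible $\xi$. A Helmholtz / de~Rham-type argument then produces $\Pi^n_\tau\in H^1(X)$, unique up to an additive constant that I fix by $\int \Pi^n_\tau\dd\mu=0$, such that the bracket equals $\nabla\Pi^n_\tau(x)$ pointwise a.e.; rearranging gives \eqref{eq:push-forward-JKO}.

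Identity \eqref{eq:velocity} is then immediate: by optimality of $T^n_y$, $\wass(\rho^n_{\tau,y},\rho^{n-1}_{\tau,y})^2=\int_X |T^n_y(x)-x|^2\dd\rho^n_{\tau,y}$; substituting $T^n_y(x)-x=\tau\nabla_X[V-\Pi^n_\tau+\kappa\log\rhod^n_\tau]$ and integrating against $\nu$ gives the stated formula through the definition of $\wassF$. From this, \eqref{eq:dissipation} follows by combining with the standard JKO energy inequality $\tfrac{1}{2\tau}\wassF(\rho^n_\tau,\rho^{n-1}_\tau)^2\le\nrg(\rho^{n-1}_\tau)-\nrg(\rho^n_\tau)$ (obtained by using $\rho^{n-1}_\tau$ as competitor in \eqref{eq:JKO-scheme}), telescoping in $n$, and using $\nrg\ge0$.

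For the $L^2$ bound \eqref{eq:L2-bound-phi}, I would expand $|\nabla_X(V-\Pi^n_\tau+\kappa\log\rhod^n_\tau)|^2\,\rhod^n_\tau$ and integrate over $X\times Y$ against $\dd x\dd\nu(y)$. Three observations drive the computation: first, by the first-marginal constraint, $\int_{X\times Y}|\nabla\Pi^n_\tau|^2\dd\rho^n_\tau=\int_X|\nabla\Pi^n_\tau|^2\dd\mu$; second, the pointwise identity $\kappa^2|\nabla_X\log\rhod^n_\tau|^2\,\rhod^n_\tau=4\kappa^2|\nabla_X\sqrt{\rhod^n_\tau}|^2$ converts the entropy gradient into Fisher information; third, the critical cross term $-2\kappa\int\nabla\Pi^n_\tau\cdot\nabla_X\log\rhod^n_\tau\dd\rho^n_\tau$ reduces, after exchanging the integrals and using $\int\rhod^n_\tau\dd\nu=\mu$, to $-2\kappa\int_X\nabla\Pi^n_\tau\cdot\nabla\mu\dd x$. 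This cross term can be absorbed via Young's inequality at the cost of a bounded $\kappa^2\|\nabla\sqrt{\mu}\|^2_{L^2(X)}$ contribution, thanks to the Fisher-information bound \eqref{eq:fisher-info-mu}; the remaining cross terms involving $\nabla_X V$ are controlled by $\|\nabla_X V\|_\infty$. Collecting and summing against the dissipation \eqref{eq:dissipation} yields \eqref{eq:L2-bound-phi}, with the linear-in-$N$ term arising from the $\mu$-dependent constant coming out of Young's inequality at each step.

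The main obstacle is the uniform $L^\infty$ bounds on $\rhod^n_\tau$, from above and away from zero, with constants depending on $\kappa$ and $\tau$ but not on $n$. Lemma~\ref{lem:steppositive} only provides $(\mu\otimes\nu)$-a.e.~positivity, with no quantitative estimate. The natural strategy is to exploit the KKT characterization of $\gamma^n_\tau$: writing out the dual of \eqref{eq:primal} expresses $\rhod^n_\tau$ as an exponential of the dual potentials (the pressure $\Pi^n_\tau$ on $X$, and a companion potential on $X\times Y$ dual to the $\rho^{n-1}_\tau$-marginal constraint). Uniform $L^\infty$ bounds on these potentials, obtained for instance through a fibre-wise $c$-transform combined with an iterative comparison argument that transfers bounds on $\rhod^{n-1}_\tau$ to bounds on $\rhod^n_\tau$, then translate into two-sided $L^\infty$ bounds on $\rhod^n_\tau$. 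Keeping the constants independent of $n$ while tracking their $\kappa,\tau$-dependence is the delicate point; unsurprisingly, the bounds must degenerate in the singular limits $\kappa\to 0$ or $\tau\to 0$.
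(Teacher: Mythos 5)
Your treatment of \eqref{eq:push-forward-JKO}, \eqref{eq:velocity}, \eqref{eq:dissipation} and \eqref{eq:L2-bound-phi} is essentially sound and partly follows a different route from the paper: you derive the Euler--Lagrange condition by inner variations (pushing $\gamma^n_\tau$ forward along a flow $x\mapsto x+\veps\xi(x,y)$) and then identify the multiplier $\nabla\Pi^n_\tau$ by a de~Rham/Helmholtz argument, whereas the paper perturbs the density multiplicatively, $\rho_\eps=(1+\eps\theta)\rho^n$, and extracts the separated form $P(x)+Q(y)$ via a Hahn--Banach argument (Lemma \ref{lem:HB}). Both are standard; note however that your route requires differentiating the entropy under the flow and integrating $\nabla_X\rhod^n_\tau$ by parts, which presupposes regularity of $\rhod^n_\tau$ that is not available a priori, and that membership $\Pi^n_\tau\in H^1(X)$ does not come for free from de~Rham (the paper gets $\nabla\Pi^n\in L^2$ only after showing $\Pi^n-\kappa\log\mu$ is Lipschitz). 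Your single-expansion derivation of \eqref{eq:L2-bound-phi}, with the cross term $-2\kappa\int\nabla\Pi^n\cdot\nabla_X\log\rhod^n\dd\rho^n=-2\kappa\int\nabla\Pi^n\cdot\nabla\mu\dd x$ absorbed by Young's inequality, is equivalent to the paper's two-pass computation.

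The genuine gap is the uniform two-sided $L^\infty$ bound on $\rhod^n_\tau$. You explicitly leave this open, and the strategy you sketch --- an iterative comparison transferring bounds from $\rhod^{n-1}_\tau$ to $\rhod^n_\tau$ --- points in the wrong direction: such a transfer would generically produce constants that deteriorate with $n$, precisely what the lemma forbids, unless one additionally proves the bound map has a fixed point. The paper's argument is direct and uses no information about $\rhod^{n-1}_\tau$ beyond its existence: the Kantorovich potential $\Phi(\cdot,y)$ from $\rho^n_{\tau,y}$ to $\rho^{n-1}_{\tau,y}$ is $\diam(X)$-Lipschitz for every $n$ (since $\nabla_X\Phi(x,y)=x-T^n_y(x)$), so by \eqref{eq:euler-lagrange-pi} the function $\kappa\log\rhod^n_\tau-\Pi^n_\tau$ is $x$-Lipschitz with constant $L=\diam(X)/\tau+\|\nabla V\|_\infty$; averaging in $y$ against $\rhod^n_\tau\dd\nu$ shows $\Pi^n_\tau-\kappa\log\mu$ is also $L$-Lipschitz, hence $\Psi:=\kappa\log\rhod^n_\tau-\kappa\log\mu$ is $2L$-Lipschitz in $x$; finally the normalization $\int_X e^{\Psi(x,y)/\kappa}\dd\mu(x)=1$ forces $\Psi(\cdot,y)$ to vanish somewhere on the connected set $X$, giving $|\Psi|\le 2L\,\diam(X)$ and thus $e^{-C}\le\rhod^n_\tau\le e^{C}$ with $C$ independent of $n$. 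Without this step your proof of the lemma is incomplete, and the Lipschitz bound on $\Pi^n_\tau-\kappa\log\mu$ obtained along the way is also what justifies $\Pi^n_\tau\in H^1(X)$ in the first place.
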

	\begin{proof}
		In this proof we will drop the subscript $\tau$ for better readability.
		
       We extend the method developed in \cite{SantambrogioOT}. Let $\Phi:X\times Y\to\R$ be a measurable function such that, for $\nu$-a.e.~$y\in Y$, the map $x\mapsto \Phi(x,y)$ is the optimal dual potential in \eqref{eq:dual-wass2} between $\rho^n_y$ and $\rho^{n-1}_y$, i.e.,
        \begin{align}
		      \label{eq:Phipush}
		      (\id_X - \nabla_X\Phi(\cdot,y))_\sharp\rho^n_y = \rho^{n-1}_y
		\end{align}
        for those $y$. For $\nu$-a.e.~$y\in Y$, the density $\rhod^n(\cdot,y)$ is positive a.e.~on $X$ by Lemma \ref{lem:steppositive}, hence $\Phi(\cdot,y)$ is Lipschitz continuous for those $y$, with a $y$-uniform Lipschitz constant given by $\diam(X)$.
        Further, 
        $\Phi(\cdot,y)$ is unique up to an additive constant, again thanks to positivity of $\rho^n$ and the connectedness of $X$; we assume that $\Phi(\cdot,y)$ is normalized to have zero average (that normalization will be changed at the end of the proof). Finally, as outlined in Section \ref{sec:preliminaries-ot}, optimality implies that
		\begin{align}
		\frac12\wass\big(\rhod^n(\cdot, y),\rhod^{n-1}(\cdot, y)\big)^2 
		&= 
		\frac12\int_X \big|\nabla_X\Phi(x,y)\big|^2 \rhod^n(x,y)\dd x, \\
		\label{eq:wass2}
		&= \int_X \big[\Phi(x,y)\rhod^n(x,y) + \Psi(x,y)\rhod^{n-1}(x,y)\big]\dd x,
		\end{align}
		for $\nu$-a.e.~$y\in Y$, where $\Psi(\cdot,y)$ is the corresponding Kantorovich potential (cf.~\eqref{eq:dual-wass2}).
		
        Now let $\theta\in L^\infty(X\times Y, \Lebesgue \otimes \nu)$ be a perturbation satisfying
		\begin{align}
		      \label{eq:admperturb}
		      \int_X\theta(x,y) \rhod^n(x,y)\dd x = 0 \quad \text{for $\nu$-a.e.~$y\in Y$},
		      \quad 
		      \int_Y\theta(x,y) \rhod^n(x,y)\dd\nu(y) = 0 \quad \text{for a.e.~$x\in X$}.
		\end{align}
		  Then  $\rho_\eps:=(1+\eps\theta)\rho^n\in\adm$, at least for sufficiently small $\eps\ge0$, i.e., $\eps<1/\|\theta\|_{L^\infty}$, since \eqref{eq:admperturb} guarantees the conservation of the marginals $\mu$ and $\nu$. 
		  Notice further that the corresponding density $\rhod_\eps(\cdot,y)$ is positive for $\nu$-a.e.~$y\in Y$.  Let $\Phi_\eps$ and $\Psi_\eps$ be such that $\Phi_\eps(\cdot,y)$ is the Kantorovich potential from $\rhod_\eps(\cdot,y)$ to $\rhod^{n-1}(\cdot,y)$ for $\nu$-a.e.~$y\in Y$, made unique by normalization to zero average, and $\Psi_\eps(\cdot,y)$ is the respective $c$-transform. Again by positivity of the densities, and by boundedness of $X$, we may conclude that $\Phi_\eps(\cdot, y)\to\Phi(\cdot, y)$ uniformly as $\eps\to0$ for $\nu$-a.e.~$y\in Y$
		  \cite[Theorem 1.52]{FS18}. 
		  By optimality of $\rho^n$ in $\nrg(\cdot\mid\rho^{n-1})$, and of $\Phi$ and $\Psi$ for the transport, we then have
		\begin{align*}
		&\frac1\tau \int_{X\times Y} \big[\Phi_\eps(x,y)\rhod_\eps(x,y) + \Psi_\eps(x,y)\rhod^{n-1}(x,y)\big]\dd x \dd\nu(y) + \nrg(\rho_\eps) \\ 
		&\ge \frac1\tau\int_{X\times Y} \big[\Phi(x,y)\rhod^n(x,y) + \Psi(x,y)\rhod^{n-1}(x,y)\big]\dd x\dd\nu(y)  + \nrg(\rho^n) \\
		&\ge\frac1\tau\int_{X\times Y} \big[\Phi_\eps(x,y)\rhod^n(x,y) + \Psi_\eps(x,y)\rhod^{n-1}(x,y)\big]\dd x\dd\nu(y)  + \nrg(\rho^n).
		\end{align*}
		After subtraction and division by $\eps>0$, it follows that
		\begin{align*}
		\frac1\tau \int_{X\times Y} \Phi_\eps(x,y)
		(\rhod^n\theta)(x,y)\dd x\dd\nu(y) 
		& + \int_{X\times Y} V(x,y)
		(\rhod^n\theta)(x,y)\dd x\dd\nu(y) \\ 
		& + \frac\kappa\eps\int_{X\times Y} \big(\rhod_\eps\log\rhod_\eps-\rhod_\eps - \rhod^n\log\rhod^n+\rhod^n\big)\dd x\dd\nu(y)  \ge 0.
		\end{align*}
		Passing to $\eps\to0$, recalling that $\Phi_\eps(\cdot, y)\to\Phi(\cdot, y)$ uniformly and $\Phi$ is uniformly bounded by boundedness of the domain, we can extract the limit of the first two terms by means of dominated convergence.
		For the third term, since $s \mapsto f(s) = s\log s - s + 1$ is a differentiable convex function with $f'(s)=\log s$, it holds pointwise that $f(\rhod_\veps) - f(\rhod^n) \ge \veps \theta \rhod^n \log \rhod^n$, which in the $\veps \to 0$ limit yields:
		\begin{align*}
		\int_{X\times Y} \left[\frac1\tau\Phi(x,y)+V(x,y)+\kappa\log\rhod^n(x,y)\right]
		\theta(x,y)\dd \rho^n(x,y)
		\ge 0.
		\end{align*}
		Given that $\theta\in L^\infty(X\times Y, \Lebesgue \otimes \nu)$ is arbitrary subject to \eqref{eq:admperturb}, we conclude that there are functions $P\in L^1(X, \mu)$ and $Q\in L^1(Y, \nu)$ such that
		\begin{equation}
		\label{eq:euler-lagrange-pi}
		\frac1\tau \Phi(x,y) + V(x,y) + \kappa \log\rhod^n(x,y) = P(x) + Q(y) ;
		\end{equation}
        see Lemma \ref{lem:HB} from the Appendix for details.       
		Since $Q$ only shifts the level of the potentials $\Phi(\cdot,y)$ ---which does not affect the $x$-gradient---, \eqref{eq:push-forward-JKO} and \eqref{eq:velocity} follow in view of \eqref{eq:Phipush} with $\Pi^n:=P$. 
		$P$ can be chosen to have zero mean with respect to $\mu$ by adding an additional global constant offset to $\Phi$. Moreover, summing \eqref{eq:velocity} in time and using \eqref{eq:energy-estimate} yields \eqref{eq:dissipation}.
		
		Let us now show that $\rhod^n$ is uniformly bounded from above and away from zero, with a bound that does not depend on the iteration number $n$. The function $\Phi/\tau + V$ is uniformly Lipschitz
		in $x$ by \eqref{eq:Phipush}, with Lipschitz constant $L \assign \diam(X) / \tau + \|\nabla V \|_\infty$, so \eqref{eq:euler-lagrange-pi} implies that $\kappa \log \rhod^n - \Pi^n$ must be uniformly $L$-Lipschitz in $x$ as well. To deduce the individual regularity of each $\rhod^n$ and $\Pi^n$, define first the auxiliary function $\sigma \assign \rhod^n e^{-\Pi^n/\kappa}$. Then $\kappa \log \sigma = \kappa \log \rhod^n - \Pi^n$, so $\kappa \log \sigma$ is $L$-Lipschitz in $x$. Now note that, since $|\nabla_X (\kappa \log \rhod^n - \Pi^n)|$ is uniformly bounded by $L$, after averaging in $Y$ the Lipschitz bound cannot be worse. 
		Therefore we find that $\mu$-almost everywhere in $X$:
		\begin{align*}
		L \mu
		&\ge
		\left|
		\int_Y \nabla_X (\kappa \log \rhod^n - \Pi^n) \rhod^n \dd \nu(y)
		\right|
		=
		\left|
		\kappa \int_Y (\nabla_X \log \sigma ) \sigma e^{\Pi^n/\kappa} \dd \nu(y)
		\right|
		=
		\left|
		\kappa \int_Y (\nabla_X \sigma) e^{\Pi^n/\kappa} \dd \nu(y)	 
		\right|		
		\\
		&
		=
		\left|
		\kappa \nabla_X \left( \int_Y\sigma \dd \nu(y)\right)
		e^{\Pi^n/\kappa}		 		
		\right|
		=
		\left|
		\kappa \nabla \left( \mu e^{-\Pi^n/\kappa}	\right)
		e^{\Pi^n/\kappa}		 		
		\right|
		=
		\mu 
		|\nabla (\kappa\log \mu - \Pi^n)|.
		\end{align*}
		This shows that the function $\Pi^n - \kappa \log \mu$ is also $L$-Lipschitz, and since by assumption $\nabla\log\mu$ is an $L^2$ function, $\nabla \Pi^n$ is in $L^2$ as well.
		
		We will now show the uniform lower bound of $\rhod^n$.
		Define $\Psi(x, y) = \kappa \log r^n(x,y) -  \kappa \log \mu(x)$, which by \eqref{eq:euler-lagrange-pi} can be rewritten as: 
		\begin{equation}
			\Psi(x,y)
			=
			\Pi^n(x)  - \kappa \log \mu - \frac{1}{\tau} \Phi(x,y) - V(x,y).
		\end{equation}  
		Therefore $\Psi$ has an $x$-Lipschitz constant of $2L$.
		On the other hand note that, by the properties of disintegrations (Lemma \ref{lemma:disintegrations-have-density}), for $\nu$-a.e.~$y\in Y$ it holds:
		\begin{equation}
		1
		=
		\int_X r(x,y) \dd x
		=
		\int_X  \frac{r(x,y)}{\mu(x)} \dd \mu(x )
		=
		\int_X 	e^{\Psi(x,y)/\kappa} \dd \mu(x),
		\end{equation}
		and consequently:
		\begin{equation}
		\label{eq:losumexp}
		0 = \kappa \log \int_X 	e^{\Psi(x,y)/\kappa} \dd \mu(x).
		\end{equation}
		Since the log-sum-exp behaves like a weighted average, \eqref{eq:losumexp} is between the maximum and minimum value of $\Psi$; and since $\Psi$ is continuous in $x$ and $X$ is connected, we conclude that for $\nu$-a.e.~$y\in Y$ there exists a point $\hat{x}\in X$ such that $\Psi(\hat{x}, y) = 0$. As a result, $|\Psi|$ is uniformly bounded by $2L\, \diam X $.
		We conclude that there exists a finite constant $C>0$ depending on $\diam(X)$, $\tau$, $\|\nabla V\|_{\infty}$, $\kappa$ and the lower and upper bounds of $\mu$ (but not on $n$) such that $\rhod^n = \mu \exp(\Psi/\kappa)$ is bounded as: 
		\begin{equation}
		e^{-C }
		\le 
		r^n(x,y)
		\le 
		e^{C }.
		\end{equation}
		
		For the estimate \eqref{eq:L2-bound-phi} let us further develop \eqref{eq:velocity}:
		\begin{align*}
		\frac{1}{\tau^2}&\wassF(\rho^n,\rho^{n-1}\big)^2 
		= 
		\int_{X\times Y} \big|\nabla_X(V - \Pi^n + \kappa\log \rhod^n)\big|^2 \dd \rho^n
		\\
		&=
		\int_{X} \big|\nabla \Pi^n \big|^2 \dd \mu
		+
		\int_{X\times Y} \big|\nabla_X(V  + \kappa\log \rhod^n)\big|^2 \dd\rho^n
		-
		2 \int_{X\times Y} \nabla \Pi^n \cdot \nabla_X(V  + \kappa\log \rhod^n) \dd\rho^n.
		\end{align*}
		Rearranging terms, and noting that the second contribution is always non-negative:
		\begin{equation}
		\int_{X\times Y} \big|\nabla \Pi^n \big|^2 \dd\mu
		\le 
		\frac{1}{\tau^2}\wassF(\rho^n,\rho^{n-1}\big)^2 
		+
		2 \int_{X\times Y} \nabla \Pi^n \cdot 
		\nabla_X\left( V + \kappa\log \rhod^n\right)\dd\rho^n.
		\label{eq:bound-pi-one-step}
		\end{equation}
		Note that for the entropic term we can write:
		\begin{align*}
		\int_{X\times Y} \nabla \Pi^n \cdot 
		\nabla_X \log \rhod^n\dd \rho^n
		&=
		\int_{X\times Y} \nabla \Pi^n \cdot 
		\frac{\nabla_X \rhod^n}{\rhod^n} \dd\rho^n
		=
		\int_{X\times Y} \nabla \Pi^n \cdot 
		\nabla_X \rhod^n \dd x \dd \nu(y)
		\\
		&=
		\int_{X} \nabla \Pi^n \cdot 
		\nabla \mu \dd x
		=
		\int_{X} \nabla \Pi^n \cdot 
		\nabla \log \mu \dd \mu.
		\end{align*}
		Thus, summing \eqref{eq:bound-pi-one-step} in time:
		
		\begin{align*}
		\sum_{n = 1}^{N}
		\tau
		&\int_{X\times Y} \big|\nabla \Pi^n \big|^2 \dd \mu(x)
		\le 
		\sum_{n = 1}^{N}
		\frac{1}{\tau}\wassF(\rho^n,\rho^{n-1}\big)^2
		+ 2\tau\int_{X\times Y} \nabla \Pi^n \cdot 
		\nabla_X\left( V + \kappa\log \rhod^n\right)\dd \rho^n ,
		\end{align*}
		and using \eqref{eq:energy-estimate} for the first term:
		\begin{align*}
		&\le
		2\nrg(\rho^0)
		+
		\sum_{n = 1}^{N}
		2\tau\int_{X\times Y} \nabla \Pi^n \cdot 
		\nabla_X V \dd \rho^n 
		+
		2\tau\kappa\int_{X} \nabla \Pi^n \cdot 
		\nabla \log \mu \dd \mu
		\\
		&\le
		2\nrg(\rho^0)
		+
		\sqrt{ N \tau}
		\big(2 \|\nabla_X V\|_\infty +
		2\kappa \|\nabla \log\mu\|_{L^2(X, \mu)}\big)
		\left(
		\sum_{n = 1}^{N}
		\tau
		\int_{X} \big|\nabla \Pi^n \big|^2 \dd \mu
		\right)^{1/2}
		\\
		&\le 
		2\nrg(\rho^0)
		+
		\frac12 N \tau
		\big(2 \|\nabla_X V\|_\infty +
		2\kappa \|\nabla \log\mu\|_{L^2(X, \mu)}\big)^2
		+
		\frac12
		\sum_{n = 1}^{N}
		\tau
		\int_{X} \big|\nabla \Pi^n \big|^2 \dd \mu.
		\end{align*}
		The bound on $\Pi^n$ in \eqref{eq:L2-bound-phi} now follows from cancelling half of the sum in time and accounting for \eqref{eq:fisher-info-mu}. To obtain the bound on $\sqrt{\rhod^n}$, we expand the derivative of $\Phi$ in a different way:
        \begin{align*}
            \frac{1}{\tau}\intXY |\nabla_X\Phi|^2 \dd \rho^n 
            &= 
            \tau\intXY|\nabla_X(\kappa\log\rhod^n + V - \Pi^n)|^2 \dd \rho^n
            \\
            &\ge 
            \tau
            \intXY
            \left[
            \frac{\kappa^2}2| \nabla_X\sqrt{\rhod^n}|^2 - 2|\nabla_X V|^2\rhod^n - 2|\nabla\Pi^n|^2\rhod^n
            \right]
            \dd x \dd \nu(y).
        \end{align*}
        Using the already established estimate on $\nabla\Pi^n$, the remaining part of \eqref{eq:L2-bound-phi} follows.
	\end{proof}
 
	\begin{lemma}
		For every $\psi\in C^\infty(\overline X\times Y)$ with $\nml\cdot\nabla_X\psi(x,y)=0$ for all $x\in\partial X$, $y\in Y$, 
		we have that
		\begin{align}
		\label{eq:weakxz}
		\int_{X\times Y} \psi \dd \left[
		\frac{\rho_\tau^n-\rho_\tau^{n-1}}\tau
		\right]
		= 
		-\int_{X\times Y} \nabla_X\psi\cdot\nabla_X\big[V-\Pi_\tau^n\big]\dd \rho_\tau^n 
		+ 
		\kappa\int_{X\times Y} \Delta_X\psi \dd \rho_\tau^n
		+ 
		\tau\,R_\tau^n(\psi),
		\end{align}
		where the remainder is bounded as
		\begin{align} 
		\label{eq:remainder}
		\big|R_\tau^n(\psi)\big|\le \frac12\|\psi\|_{C^2}S_\tau^n,
		\quad S_\tau^n:=\frac{\wassF(\rho_\tau^n, \rho_\tau^{n-1})^2}{\tau^2}.
		\end{align}
		In particular, for every $\xi\in C^\infty(\overline X)$ with $\nml\cdot\nabla_X\xi(x)\equiv0$ on $\partial X$,
		\begin{align}
		\label{eq:weakx}
		\int_X \nabla_X\xi\cdot\nabla_X\Pi_\tau^n\,\dd \mu - \int_{X\times Y} \nabla_X\xi\cdot\nabla_XV\,\dd \rho_\tau^n
		+ 
		\kappa\int_{X\times Y}
		\Delta_X \xi \dd \mu(x)
		= \tau\, R_\tau^n(\xi).
		\end{align}
	\end{lemma}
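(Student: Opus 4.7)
The starting point is the push-forward identity \eqref{eq:push-forward-JKO} from Lemma \ref{lemma:bound-X-derivative-rho}: defining
\[
    T^n_\tau(x,y) := x + \tau\nabla_X\big[V(x,y) - \Pi^n_\tau(x) + \kappa\log \rhod^n_\tau(x,y)\big],
\]
the fiber-wise relation $\rho^{n-1}_{\tau,y} = T^n_\tau(\cdot,y)_\sharp\rho^n_{\tau,y}$ gives, after integrating against $\psi(\cdot,y)$ and $\nu$,
\[
    \int_{X\times Y} \psi \dd\rho^{n-1}_\tau = \int_{X\times Y}\psi\big(T^n_\tau(x,y),y\big)\dd\rho^n_\tau(x,y).
\]
Hence the left-hand side of \eqref{eq:weakxz} equals $-\tfrac{1}{\tau}\int_{X\times Y}[\psi(T^n_\tau(x,y),y) - \psi(x,y)]\dd\rho^n_\tau$.

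Next I would perform a second-order Taylor expansion in $x$:
\[
    \psi(T^n_\tau(x,y),y) - \psi(x,y) = \tau\,\nabla_X\psi(x,y)\cdot\nabla_X[V-\Pi^n_\tau+\kappa\log \rhod^n_\tau](x,y) + \mathcal{R}(x,y),
\]
with $|\mathcal{R}(x,y)| \le \tfrac12\|\psi\|_{C^2}|T^n_\tau(x,y)-x|^2$. To make this expansion rigorous even when the segment from $x$ to $T^n_\tau(x,y)$ exits $X$, I extend $\psi$ to a $C^2$ function on $\R^d\times Y$ by reflection across the smooth boundary $\partial X$; the no-flux boundary condition $\nml\cdot\nabla_X\psi=0$ guarantees that this extension is $C^2$ with $C^2$-norm controlled by the original one up to a geometric constant (which I absorb into $\|\psi\|_{C^2}$).

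The linear term splits into the potential/pressure contribution $-\int\nabla_X\psi\cdot\nabla_X[V-\Pi^n_\tau]\dd\rho^n_\tau$ and the entropic contribution $-\kappa\int\nabla_X\psi\cdot\nabla_X\log \rhod^n_\tau \dd\rho^n_\tau$. For the latter, I use $\rhod^n_\tau\nabla_X\log \rhod^n_\tau = \nabla_X \rhod^n_\tau$ to rewrite it as $-\kappa\int_{X\times Y}\nabla_X\psi\cdot\nabla_X \rhod^n_\tau \dd x \dd\nu(y)$, and integrate by parts in $x$ for $\nu$-a.e.~$y$; the boundary term vanishes by the no-flux condition on $\psi$, and the bulk term produces $+\kappa\int\Delta_X\psi \dd\rho^n_\tau$. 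Collecting the contributions yields \eqref{eq:weakxz} with $\tau R^n_\tau(\psi) := -\tfrac{1}{\tau}\int \mathcal{R}\dd\rho^n_\tau$. The bound \eqref{eq:remainder} follows from the pointwise Taylor estimate together with identity \eqref{eq:velocity}:
\[
    |R^n_\tau(\psi)| \le \tfrac12 \|\psi\|_{C^2} \cdot \frac{1}{\tau^2}\intXY |T^n_\tau(x,y)-x|^2\dd\rho^n_\tau = \tfrac12\|\psi\|_{C^2}\,S^n_\tau.
\]

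Finally, \eqref{eq:weakx} is obtained by applying \eqref{eq:weakxz} to $\psi(x,y) := \xi(x)$: the left-hand side vanishes because $\rho^n_\tau$ and $\rho^{n-1}_\tau$ share the common first marginal $\mu$, and the integrals of $\nabla\xi\cdot\nabla\Pi^n_\tau$ and $\Delta_X\xi$ against $\rho^n_\tau$ reduce to integrals against $\mu$ since their integrands depend only on $x$. Rearranging gives the stated identity (the sign of the remainder is irrelevant for the bound). The main technical subtlety is the $C^2$ extension of $\psi$ across $\partial X$; everything else reduces to bookkeeping once the push-forward formula is in hand.
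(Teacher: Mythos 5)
Your proposal is correct and follows essentially the same route as the paper: push-forward identity \eqref{eq:push-forward-JKO}, second-order Taylor expansion in $x$, integration by parts of the entropic term using the no-flux condition, the remainder bound via \eqref{eq:velocity}, and the choice $\psi(x,y)=\xi(x)$ for \eqref{eq:weakx}. Your only addition is the explicit $C^2$ extension of $\psi$ across $\partial X$ to justify the Taylor expansion when the segment from $x$ to its image leaves $X$ --- a point the paper's proof silently glosses over, so this is a welcome (if minor) refinement rather than a deviation.
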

	\begin{proof}
		Integrating $\rho_\tau^{n-1}$ with respect to $\psi$ and using \eqref{eq:push-forward-JKO} to perform:
		\begin{align*}
		\int_{X\times Y}
		\psi(x,y)
		\dd\rho_\tau^{n-1}(x,y)
		&=
		\int_{X\times Y}
		\psi(x + \tau \nabla_X[V - \Pi_\tau^n + \kappa\log \rhod_\tau^n],y)
		\rhod^n_\tau(x,y) \dd x \dd \nu(y)
		\\
		&=
		\int_{X\times Y}
		[
		\psi(x,y)
		+
		\tau
		\nabla_X\psi\cdot \nabla_X(V - \Pi_\tau^n + \kappa\log \rhod_\tau^n)
		]
		\rhod_\tau^n   \dd x \dd \nu(y)  
		\\   
		&\ \ + 
		\frac{\tau^2}{2} R_\tau^n(\psi),
		\end{align*}
		where the remainder $R_\tau^n(\psi)$ is bounded by the second-order term: 
		\begin{align*}
			|R_\tau^n(\psi)| 
			&\le 
			\|\nabla_X^2\psi\|_\infty \int_{X\times Y}
			|\nabla_X(V - \Pi_\tau^n + \kappa\log \rhod_\tau^n)|^2
			 \dd \rho^n_\tau
			\le \|\nabla_X^2\psi\|_\infty \int_{X\times Y}
			\frac{\wassF(\rho_\tau^n, \rho_\tau^{n-1})^2}{\tau^2}.
		\end{align*}
	
		Then \eqref{eq:weakxz} follows by collecting the finite difference in $\rho^n_\tau$ and integrating the entropic term by parts, while \eqref{eq:weakx} results from choosing $\psi(x,y)=\xi(x)$.
	\end{proof}
	
	\section{Existence of weak solutions}
	\label{sec:existence}
	Throughout this section we denote by $\bar{\Pi}_\tau$ and $\bar{\rho}_\tau$ the piecewise-constant interpolations of the iterates $(\Pi^n_\tau)_n$ and $(\rho^n_\tau)_n$:
	\begin{align}
	\bar{\rho}_\tau(t) 
	&=
	\rho_\tau^{\lfloor t/\tau \rfloor}, 
	\qquad 
	\bar{\rhod}_\tau(t) 
	=
	\rhod_\tau^{\lfloor t/\tau \rfloor}, 
	\qquad 
	\bar{\Pi}_\tau(t) 
	=
	\Pi_\tau^{\lfloor t/\tau \rfloor}, 
	\qquad
	\text{for all $t \in [0, \infty)$}.
	\end{align}
	
	The main result is their convergence to weak solutions of the evolution equation \eqref{eq:PDE1}-\eqref{eq:PDE2}. 
	There is a technical aspect worth noting.  
	Although most of the terms in the time-integrated version of \eqref{eq:weakxz} can be shown to converge by sheer weak* compactness of $\bar{\rho}_\tau$, the pairing between $\bar{\Pi}_\tau$ and $\bar{\rho}_\tau$ poses a special difficulty, since both converge only weakly. The control on the horizontal derivatives of $\bar{\rho}_\tau$ provided by Lemma \ref{lemma:bound-X-derivative-rho} is unfortunately insufficient to extract even weak convergence of the pairing $\bar{\rho}_\tau \nabla \bar{\Pi}_\tau$. 
	
	Fortunately, as we shall show in Lemma \ref{lemma:convergence-rho-pi} below, thanks precisely to the horizontal regularity bound shown in Lemma \ref{lemma:bound-X-derivative-rho}, the $Y$-average of $\bar{\rho}_\tau$ against a smooth test function yields a sequence converging strongly to the corresponding average of the limiting $\rho$. Since the potential $\bar{\Pi}_\tau$ is merely a function on $X$, integration against this strongly convergent sequence of averages yields the desired convergence of the joint product. This intuition is made rigorous by the following Lemma:
	
	\begin{lemma}
		\label{lemma:convergence-rho-pi}
		There are functions $\rho \in C(\Rnn; \Gamma(\mu, \nu))$ and $\Pi \in L^2_{loc}(\Rnn; H^1(X))$ such that, in the limit $\tau \to 0$, at least along a suitable sequence and for each $T\in (0,\infty)$:
		\begin{align}
		\label{eq:convergence-rho}
		\bar{\rho}_\tau(t) &\weakto \rho(t) \quad \text{locally uniformly w.r.t.\ $t\in\Rnn$ in $\wass$},					
		\\
		\label{eq:convergence-pi}
		\bar{\Pi}_\tau &\weakto \Pi\quad \text{weakly in } L^2([0, T]; H^1(X)).
		\end{align}
		Moreover, define for any given $\zeta\in C^1_c(\Rnn\times X\times Y)$ the \emph{vertical averages}
		\begin{equation}
		\label{eq:def_omega}
		\omega_\tau(t,x) := \int_Y \zeta(t,x,y)\bar{\rhod}_\tau(t, x, y)\dd \nu(y)
		,
		\qquad
		\omega(t,x) := \int_Y \zeta(t,x,y)\rhod(t, x, y)\dd \nu(y).
		\end{equation}
		where $\bar{\rhod}_\tau \in L^1(X\times Y, \Lebesgue \otimes \nu)$ and $\rhod \in L^1(X\times Y, \Lebesgue \otimes \nu)$ denote respectively the densities of $\bar{\rho}_n$ and $\rho$ with respect to $\Lebesgue \otimes \nu$.
		Then, along the same sequence as above,
		\begin{align}
		\label{eq:convergence-rho-average}
		\omega_\tau&\to 
		\omega
		\quad \text{strongly in $L^2([0,T]\times X)$}, 	
		\\
		\label{eq:convergence-rho-pi-product}
		\omega_\tau\nabla\bar{\Pi}_\tau &\weakto 
		\omega\nabla\Pi
		\quad \text{weakly in $L^1([0,T]\times X)$}.  		
		\end{align}
		
		The limit ($\rho, \Pi)$ satisfies, for all $\psi \in C_c^\infty(\R_{+} \times X \times Y)$ with $\nabla_X \psi \cdot \mathbf{n} \equiv 0$ on $\partial X$ the weak equation:
		\begin{equation}
		\int_0^\infty
		\intXY
		\partial_t \psi \dd \rho
		=
		\int_0^\infty
		\intXY
		\big[
		\nabla_X \psi\cdot\nabla_X [V - \Pi] 
		-
		\kappa
		\Delta_X \psi
		\big]\dd \rho
		-
		\int_{X\times Y}
		\psi(0) \dd \rho^0.
		\label{eq:weaktxy}
		\end{equation}
		
		The potential $\Pi$ is given for a.e.~time as the unique solution satisfying $\int_X \Pi \dd \mu = 0$ of the elliptic equation: 
		\begin{equation}
		\label{eq:weak-potential}
		\int_X \nabla \xi \cdot \nabla \Pi(t)\,  \dd \mu
		=
		\int_X \nabla \xi\cdot  \left[
		\int_Y \nabla_X V(x,y) \rhod(t,x, y) \dd \nu(y)
		\right]
		\dd x
		-\kappa \int_X \Delta \xi \dd \mu
		\end{equation}
		for all $ \xi\in C^\infty(X)$ with $\nabla \xi \cdot \mathbf{n} = 0$ on $\partial X$.
	\end{lemma}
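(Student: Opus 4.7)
The plan is to extract subsequential limits for $\bar\rho_\tau$ and $\bar\Pi_\tau$ by soft compactness arguments, then establish strong $L^2$ compactness of the vertical averages $\omega_\tau$ via an Aubin--Lions argument in order to handle the critical product $\omega_\tau\nabla\bar\Pi_\tau$, and finally pass to the limit in the discrete weak formulations \eqref{eq:weakxz}--\eqref{eq:weakx}.

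\textbf{Step 1 (limit $\rho$).} From Lemma \ref{lem:equicontinuity-rho} and Lemma \ref{lemma:order-wass}, the piecewise constant curves $t\mapsto\bar\rho_\tau(t)$ are uniformly $1/2$-Hölder equicontinuous in $\wass$ modulo an error of order $\sqrt{\tau}$ at grid points. Since $\prob(X\times Y)$ is compact under $\wass$ (as $X\times Y$ is compact), a refined Arzelà--Ascoli argument for metric-valued curves \cite{SavareGradientFlows} yields a subsequence and a limit $\rho\in C(\Rnn;\prob(X\times Y))$ with \eqref{eq:convergence-rho}. Weak$^*$-continuity of the marginal projections gives $\rho(t)\in\Gamma(\mu,\nu)$ for all $t$.

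\textbf{Step 2 (limit $\Pi$).} Estimate \eqref{eq:L2-bound-phi}, combined with the Poincaré--Wirtinger inequality (applicable since each $\Pi_\tau^n$ has zero $\mu$-mean and $\mu$ is bounded away from zero and above), gives a uniform bound on $\bar\Pi_\tau$ in $L^2([0,T];H^1(X))$ for every finite $T$. Extracting a further subsequence yields \eqref{eq:convergence-pi}.

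\textbf{Step 3 (main obstacle: strong convergence of $\omega_\tau$).} This is where the genuine work lies, and the rest of the lemma depends on it. I would apply an Aubin--Lions type compactness theorem to $\omega_\tau$. Spatial regularity: the bound \eqref{eq:L2-bound-phi} yields $\kappa^2\int_0^T\int_{X\times Y}|\nabla_X\sqrt{\bar\rhod_\tau}|^2\dd x\dd\nu\dd t\le C_T$, and combined with the $n$-uniform $L^\infty$ bound on $\bar\rhod_\tau$ from Lemma \ref{lemma:bound-X-derivative-rho}, one obtains a uniform bound on $\nabla_X\bar\rhod_\tau$ in $L^2([0,T]\times X\times Y)$. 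Since $\zeta\in C^1_c$, this gives a uniform $L^2([0,T];H^1(X))$ bound on $\omega_\tau$. Time regularity: testing \eqref{eq:weakxz} against $\psi(t,x,y)=\xi(x)\zeta(t,x,y)$ for $\xi\in C^\infty(X)$, and using the $L^\infty$ bound on $\bar\rhod_\tau$, the $L^2$ bound on $\nabla\bar\Pi_\tau$, and $\sum_n\tau S_\tau^n\le 2\nrg(\rho^0)$ from \eqref{eq:dissipation}--\eqref{eq:remainder} to control the remainder, one gets equicontinuity of $t\mapsto\int_X\xi\,\omega_\tau(t,x)\dd x$ in the sense needed for Aubin--Lions with $H^1(X)\hookrightarrow\hookrightarrow L^2(X)\hookrightarrow H^{-s}(X)$ for some $s$ large enough. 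This yields \eqref{eq:convergence-rho-average}. Consistency of the limit with the same $\rho$ of Step 1 is obtained by testing against $\zeta$ independent of $y$: then $\omega_\tau(t,x)\dd x$ is just the $X$-marginal of $\zeta(t,\cdot,\cdot)\bar\rho_\tau$, which by Step 1 converges to that of $\zeta\rho$.

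\textbf{Step 4 (product convergence and weak equations).} The weak convergence \eqref{eq:convergence-pi} restricted to $\nabla\bar\Pi_\tau$ in $L^2([0,T]\times X)$, combined with the strong $L^2$ convergence of $\omega_\tau$ from Step 3, directly gives \eqref{eq:convergence-rho-pi-product}. To obtain \eqref{eq:weaktxy}, sum \eqref{eq:weakxz} in $n$ against $\psi(t,x,y)\in C^\infty_c(\Rnn\times X\times Y)$ (with the appropriate Neumann condition), rewrite
\[
\int_{X\times Y}\nabla_X\psi\cdot\nabla\bar\Pi_\tau\dd\bar\rho_\tau=\int_X\nabla\bar\Pi_\tau\cdot\omega_\tau^\psi\dd x,\qquad \omega_\tau^\psi(t,x):=\int_Y\nabla_X\psi(t,x,y)\bar\rhod_\tau(t,x,y)\dd\nu(y),
\]
and apply the product convergence from Step 3--4 component-wise (replacing $\zeta$ by the components of $\nabla_X\psi$). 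All remaining terms pass to the limit by weak$^*$ convergence of $\bar\rho_\tau$ in $t,x,y$ (a consequence of \eqref{eq:convergence-rho}), and the total remainder $\tau\sum_n\tau R_\tau^n(\psi)$ is bounded by $\|\psi\|_{C^2}\sum_n\tau S_\tau^n\le 2\|\psi\|_{C^2}\nrg(\rho^0)$ times $\tau\to 0$ from \eqref{eq:dissipation}--\eqref{eq:remainder}, hence vanishes. The initial-datum boundary term comes from the usual discrete integration-by-parts in time. The potential equation \eqref{eq:weak-potential} follows by the same limit procedure applied to \eqref{eq:weakx} (tested against $\xi\in C^\infty(X)$ and any time weight in $C_c^\infty(\Rnn)$). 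Uniqueness of $\Pi(t)$ subject to $\int_X\Pi(t)\dd\mu=0$ holds since \eqref{eq:weak-potential} is a standard elliptic Neumann problem with weight $\mu$ bounded above and below, so the limit $\Pi$ inherits the zero $\mu$-mean from the pre-limit iterates.
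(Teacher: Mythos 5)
Your overall architecture coincides with the paper's: Arzel\`a--Ascoli in $\wass$ for $\rho$, Poincar\'e for $\Pi$, an Aubin--Lions argument for the vertical averages $\omega_\tau$, strong-times-weak for the product, and discrete summation by parts for the weak equations. However, Step 3 contains a genuine gap in the spatial compactness estimate. You claim that the Fisher-information bound \eqref{eq:L2-bound-phi} ``combined with the $n$-uniform $L^\infty$ bound on $\bar\rhod_\tau$ from Lemma \ref{lemma:bound-X-derivative-rho}'' gives a uniform bound on $\nabla_X\bar\rhod_\tau$ in $L^2([0,T]\times X\times Y)$, hence a uniform $L^2([0,T];H^1(X))$ bound on $\omega_\tau$. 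But that $L^\infty$ bound is uniform in $n$ only, \emph{not} in $\tau$: it is of the form $e^{\pm C}$ with $C$ proportional to $L\,\diam(X)/\kappa$ and $L=\diam(X)/\tau+\|\nabla V\|_\infty$, so it degenerates like $e^{c/\tau}$ as $\tau\to0$. Writing $\nabla_X\bar\rhod_\tau=2\sqrt{\bar\rhod_\tau}\,\nabla_X\sqrt{\bar\rhod_\tau}$, converting the $L^2$ control of $\nabla_X\sqrt{\bar\rhod_\tau}$ into $L^2$ control of $\nabla_X\bar\rhod_\tau$ requires exactly the $\tau$-uniform $L^\infty$ bound on $\bar\rhod_\tau$ that is unavailable, so your choice of the triple $H^1\hookrightarrow\hookrightarrow L^2\hookrightarrow H^{-s}$ cannot be fed with the estimates at hand.

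The repair is the route the paper takes: use Cauchy--Schwarz against the normalization $\int_{X\times Y}\bar\rhod_\tau\dd x\dd\nu=1$ to get
\begin{equation*}
\int_0^T\int_{X\times Y}|\nabla_X\bar\rhod_\tau|\dd x\dd\nu\dd t
\le 2\sqrt{T}\left(\int_0^T\int_{X\times Y}|\nabla_X\sqrt{\bar\rhod_\tau}|^2\dd x\dd\nu\dd t\right)^{1/2},
\end{equation*}
i.e.\ a $\tau$-uniform $W^{1,1}$-type bound on $\omega_\tau$ rather than an $H^1$ bound, and obtain the $\tau$-uniform $L^\infty$ bound on $\omega_\tau$ directly from the marginal constraint, $|\omega_\tau(t,x)|\le\|\zeta\|_\infty\int_Y\bar\rhod_\tau(t,x,y)\dd\nu(y)=\|\zeta\|_\infty\mu(x)$, not from pointwise bounds on $\bar\rhod_\tau$. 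One then applies the Rossi--Savar\'e form of Aubin--Lions with the coercive integrand $\mathcal F(\omega)=\int_X|\nabla_X\omega|\dd x+\|\omega\|_{L^\infty}$; compactness of $BV\cap L^\infty$ in $L^1$, upgraded to $L^2$ by the uniform $L^\infty$ bound, recovers \eqref{eq:convergence-rho-average}. Your time-regularity argument (testing the discrete weak equation) differs from the paper's, which instead bounds the increments of $\int_X\xi\,\omega_\tau\dd x$ directly by $\wass_1$ duality and the $1/2$-H\"older estimate of Lemma \ref{lem:equicontinuity-rho}; both are viable, the paper's being shorter since it avoids re-estimating the right-hand side of \eqref{eq:weakxz}. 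Steps 1, 2 and 4 are correct and essentially identical to the paper's.
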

	\begin{proof}
		For the convergence of the trajectories \eqref{eq:convergence-rho} we employ the refined version of Ascoli-Arzela in \cite[Proposition 3.3.1]{SavareGradientFlows}: we use Lemma \ref{lem:equicontinuity-rho} for the equicontinuity of $\rho$ w.r.t.~$\wass$, while the pointwise compactness follows from the compactness of $(\prob(X\times Y), \wass)$ for $X$ and $Y$ compact. 
		The fact that $\nrg(\bar{\rho}_\tau)$ is uniformly bounded by $\nrg(\rho_0)$ imposes a uniform upper bound on $\entr(\bar{\rho}_\tau)$, which by lower-semicontinuity of the energy (Lemma \ref{lemma:energy-lsc}) implies that
		\begin{equation}
			\nrg(\rho(t)) 
			\le 
			\liminf_{\tau \to 0}
			\nrg(\bar{\rho}_\tau(t))
			\le 
			\nrg(\rho^0),
		\end{equation}
		and thus $\rho(t)$ must have a density with respect to $\Lebesgue \otimes \nu$ for all $t\ge0$, denoted by $\rhod(t)$.
		On the other hand, the weak convergence of the potentials \eqref{eq:convergence-pi} is a direct consequence of the uniform bound \eqref{eq:L2-bound-phi} and the weighted Poincaré inequality for functions with zero mean \cite{WeightedPoincare}.
		
		For \eqref{eq:convergence-rho-average} let us first identify the limit. For a fixed $\zeta \in C_c^1(\Rnn \times X\times Y)$, integrating with respect to a test function $\psi \in C_0(\Rnn \times X)$:
		\begin{align}
		\nonumber
		\int_0^\infty\intX
		&
		\psi(t,x)
		\omega_\tau(t,x) \dd x \dd t
		=\int_0^\infty \intXY
		\psi(t,x) \zeta(t,x, y)\dd \bar{\rho}_\tau(t,x, y) 
		\\
		\nonumber
		\to&
		\int_0^\infty \intXY
		\psi(t,x) \zeta(t, x, y)\dd \rho(t,x, y) 
		=
		\int_0^\infty \intX
		\psi(t,x)
		\left[
		\int_Y  \zeta(t, x, y)\rhod(t,x, y) \dd \nu(y)
		\right] \dd x \dd t
		\\
		&=
		\int_0^\infty\intX
		\psi(t,x)
		\omega(t,x) \dd x \dd t
		\label{eq:identify-limit-omega}
		\end{align}
		This shows at least weak* convergence of $\omega_\tau \dd x \dd t$ to $\omega_\tau \dd x \dd t$. For showing a stronger convergence, we use the version of the Aubin-Lions lemma given in \cite[Theorem 2]{RossiSavare}. As Banach space, we choose $B:=L^1(X)$, as properly coercive integrand, we use
		\[ \mathcal F(\omega) = \int_X |\nabla_X\omega|\dd x + \|\omega\|_{L^\infty(X)}, \]
		and the pseudo-distance is the dual norm in $C^1(X)$, i.e.,
		\begin{align*}
		g(\omega_1,\omega_2) = \sup\left\{\int_X \xi(x)\big(\omega_1(x)-\omega_2(x)\big)\dd x\, \middle|\, \xi\in C^1(X),\,\|\xi\|_{C^1(X)}\le 1\right\}.
		\end{align*}
		Let us first show that $\omega_\tau$ is uniformly bounded in $L^\infty([0,T]\times X)$: since $\rho_\tau(t)\in \Gamma(\mu, \nu)$ for a.e.~$t\in \Rnn$, it holds: 
		\begin{equation}
		|\omega_\tau(t,x)| 
		= 
		\left|
		\int_Y \zeta(t,x,y)\bar{\rhod}_\tau(t, x, y)\dd \nu(y)
		\right|
		\le 
		\| \zeta\|_\infty
		\int_Y\bar{\rhod}_\tau(t, x, y)\dd \nu(y)
		=
		\| \zeta\|_\infty
		\mu(x)   
		\label{eq:uniform-Linfty-bound}
		\end{equation}
		for a.e.~$(t, x)\in \Rnn\times X$, and the upper bound on $\mu$ completes the claim. 
		Further, $\omega_\tau$ inherits from $\bar{\rho}_\tau$ the regularity of its $X$-gradient: 
		\begin{align*}
		\int_0^T \left(\int_X |\nabla_X \omega_\tau| \dd x\right) \dd t
		&=
		\int_0^T \intX
		\left|
		\nabla_X
		\int_Y 
		\zeta(t,x,y) \bar{\rhod}_\tau(t,x,y)\dd \nu(y)
		\right|
		\dd t \dd x
		\\
		&\le 
		\int_0^T \intXY
		\left|
		\nabla_X(\zeta \bar{\rhod}_\tau)
		\right|
		\dd t \dd x
		\dd  \nu(y)
		\\
		&\le 
		T\|\nabla_X \zeta\|_\infty
		+		
		\|\zeta\|_\infty
		\int_0^T \intXY
		\left|
		\nabla_X\bar{\rhod}_\tau
		\right|
		\dd t \dd x
		\dd  \nu(y)
		\\
		&=
		T
		\|\nabla_X \zeta\|_\infty
		+
		\|\zeta\|_\infty
		\int_0^T \intXY
		2 \sqrt{\bar{\rhod}_\tau} \nabla_X (\sqrt{\bar{\rhod}_\tau})
		\dd t \dd x
		\dd  \nu(y)
		\\
		&\le 
		T
		\|\nabla_X \zeta\|_\infty
		+
		2\|\zeta\|_\infty \sqrt{T}
		\left(
		\int_0^T \intXY
		\big|\nabla_X\sqrt{\bar{\rhod}_\tau}\big|^2
		\dd t \dd x
		\dd  \nu(y)\right)^{1/2}.
		\end{align*}
		Where we used Cauchy-Schwarz in the last inequality.
		In virtue of Lemma \ref{lemma:bound-X-derivative-rho}, we conclude
		\begin{align*}
		\int_0^T \left(\int_X |\nabla_X \omega_\tau| \dd x\right) \dd t
		\le 
		T
		\|\nabla_X \zeta\|_\infty
		+
		\frac{\|\zeta\|_\infty}{2\kappa} \sqrt{T}(C + \nrg(\rho^0)).
		\end{align*}
		In combination with the $L^\infty$-bound, we thus have
		\begin{align}
		\label{eq:AL1}    
		\sup_\tau\int_0^T \mathcal F(\omega_\tau)\dd t < \infty.
		\end{align}
		Further, the $Y$-averages $\omega_\tau$ also inherit the uniform regularity in time from $\bar{\rho}_\tau$: testing with $\xi \in C^1(X)$, we obtain that
		\begin{align*}
		\int_{X}
		\xi(x)
		&
		[\omega_\tau(m\tau, x) - \omega_\tau(n\tau, x)] \dd x
		=
		\int_{X\times Y}
		\xi(x) [(\zeta\bar{\rhod}_\tau)(m\tau, x, y) - (\zeta\bar{\rhod}_\tau)(n\tau, x, y)] \dd x 
		\dd  \nu(y)
		\\
		&
		=
		\int_{X\times Y}
		\xi(x) [\zeta(m\tau, x, y) - \zeta(n\tau, x, y)] \bar{\rhod}_\tau(m\tau, x, y) \dd x 
		\dd  \nu(y)
		\\
		&\quad+
		\int_{X\times Y}
		\xi(x) \zeta(n\tau, x, y)[\bar{\rhod}_\tau(m\tau, x, y) -  \bar{\rhod}_\tau(n\tau, x, y)] \dd x 
		\dd  \nu(y)        
		\\
		&\le
		\|\xi\|_{C^1(X)} \|\zeta\|_{C^1(\Rnn\times X \times Y)} \big[
		\tau(m-n)
		+
		\wass_1 (\rho_\tau^m, \rho_\tau^n)
		\big]
		\\
		&\le		
		\|\xi\|_{C^1(X)} \|\zeta\|_{C^1(\Rnn\times X \times Y)} \big[
		\tau(m-n)
		+
		C\sqrt{\tau(m-n)}
		\big]
		\end{align*}
		where we used \eqref{eq:dual-wass1} in the first inequality, and Lemmas \ref{lemma:order-wass} and \ref{lem:equicontinuity-rho} in the last. In other words, we have that
		\begin{align}
		\label{eq:AL2}
		\int_0^{T-h} g\big(\omega_\tau(t+h),\omega_\tau(t)\big)\dd t \to 0
		\quad \text{as $h\searrow0$, uniformly in $\tau\in(0,1)$.}
		\end{align}
		Since \eqref{eq:AL1} and \eqref{eq:AL2} are verified, it follows by means of \cite[Theorem 2]{RossiSavare} that $\omega_\tau$ converges to a limit $\omega_*\in L^1([0,T]\times X)$, in $L^1(X)$, in measure with respect to time.
		Moreover, since the limit was already identified in \eqref{eq:identify-limit-omega}, in particular we have that $\omega_* = \omega$.
		 Invoking the uniform $L^\infty$-bound \eqref{eq:uniform-Linfty-bound} again, we conclude that actually $\omega\in L^\infty([0,T]\times X)$, and that $\omega_\tau\to\omega$ strongly in $L^p([0,T]\times X)$, for any $p\in[1,\infty)$. 
		
		In particular, the convergence $\omega_\tau\to\omega$ is strong in $L^2([0,T]\times X)$, and so \eqref{eq:convergence-rho-pi-product} follows by weak convergence of $(\nabla \bar{\Pi}_\tau)_\tau$.
		
		We now turn to the weak formulation. 
		Let $\psi \in C_c^\infty(\R_{+} \times X \times Y)$ with $\nabla_X \psi \cdot \mathbf{n} \equiv 0$ on $\partial X$.
		Since $\psi$ is compactly supported, for each $\tau$ there exists an $N$ such that $\psi \equiv 0$ for $t > N\tau$. Hence,
		\begin{align}
		\nonumber
		\int_0^\infty
		\intXY
		& \partial_t \psi(t, x,y) \dd \bar{\rho}_\tau(t,x,y)
		= 
		\sum_{n=0}^{N-1}
		\int_{n\tau}^{(n+1)\tau}
		\int_{X \times Y}
		\partial_t \psi(t, x,y) \dd  \rho_\tau^n(x,y)\dd t
		\\
		\nonumber
		&=
		\sum_{n=0}^{N-1}
		\int_{X \times Y}
		\big[
		\psi((n+1)\tau, x,y)
		-
		\psi(n\tau, x,y)
		\big]
		\dd \rho_\tau^n(x,y) 
		\\
		\nonumber
		&=
		-
		\int_{X \times Y}
		\psi(0, x,y) \dd \rho^0(x,y) 
		-
		\sum_{n=1}^{N-1}
		\tau
		\int_{X \times Y}
		\psi(n\tau, x,y) \dd \left[\frac{\rho_\tau^n - \rho_\tau^{n-1}}{\tau}\right]
		\end{align}	
		The first term is the last term of \eqref{eq:weaktxy}. For the second term we turn to \eqref{eq:weakxz}:
		\begin{align}
		\nonumber
		&\int_0^\infty
		\intXY
		 \partial_t \psi(t,x,y) \dd \bar{\rho}_\tau(t,x,y)
		+ 
		\int_{X \times Y}
		\psi(0, x,y) \dd \rho^0(x,y) 
		\\
		&=			
		-
		\sum_{n=1}^{N-1}
		\tau
		\left[
		-\int_{X\times Y} \nabla_X\psi(n\tau) \cdot\nabla_X\big[V-\Pi_\tau^n\big]
		\dd\rho_\tau^n
		+
		\kappa\int_{X\times Y} \Delta_X\psi(n\tau) \dd\rho_\tau^n
		+ 
		\tau
		R_\tau^n(\psi(n\tau))
		\right].
		\label{eq:continuity-eq-derivation}
		\end{align}
		Let us first control the remainder: 
		\begin{align*}
		\sum_{n=1}^{N-1}
		\tau^2
		R_\tau^n(\psi(n\tau))
		&\le
		\tau 
		\sum_{n=1}^{N-1}
		\tau
		R_\tau^n(\psi(n\tau))
		\le 
		\tau
		\|\psi\|_{C^2}
		\sum_{n=1}^{N-1}
		\frac{\wassF(\rho_\tau^n, \rho_\tau^{n-1})^2}{2\tau}
		\le
		\tau
		\|\psi\|_{C^2}
		\nrg(\rho^0).
		\end{align*}
		which yields an $O(\tau)$ contribution. Lemma \ref{lemma:convergence-rho-pi} allows to control the rest of integrands, so that replacing $\psi(n\tau)$ by $\psi(t)$ in \eqref{eq:continuity-eq-derivation} only incurs in an $o(1)$ error. Thus, back to \eqref{eq:continuity-eq-derivation}:
		
		\begin{align*}
		\int_0^\infty \intXY
		\partial_t \psi(t, x,y) \dd \bar{\rho}_\tau(t,x,y) 
		&=
		\int_0^\infty \intXY
		\nabla_X\psi(t, x,y)\cdot
		\nabla_X V(x,y)\dd \bar{\rho}_\tau(t,x,y)
		\\
		&\quad
		-
		\int_0^\infty \intX
		\nabla_X \bar{\Pi}_\tau(t,x)
		\cdot 
		\left[
		\int_Y\nabla_X \psi(t,x,y)
		\bar{\rhod}_\tau(t,x,y) \dd \nu(y)
		\right] \dd t \dd x
		\\
		&\quad
		-
		\int_0^\infty \intXY
		\Delta_X\psi(t,x,y)
		\dd\bar{\rho}_\tau(t,x,y)
		\\
		&\quad
		-
		\int_{X \times Y}
		\psi(0,x,y) \dd \rho^0(x,y) 
		+
		o(1).
		\end{align*}
		The first and third lines converge by sheer integration of the weak* convergent $(\bar{\rho}_\tau)_\tau$ against continuous functions, while the second line converges in virtue of \eqref{eq:convergence-rho-pi-product}, and the last line is constant (up to the $o(1)$ term).
		
		Finally, \eqref{eq:weak-potential} follows from testing \eqref{eq:weaktxy} with $\psi(t,x,y) = \theta(t)\xi(x)$, for $\theta$ with compact support and satisfying $\theta(0) = 0$:
		\begin{align*}
		0
		&=
		\int_0^\infty \intX
		\partial_t \theta(t) \xi(x) \dd\mu(x)	\dd t
		=
		\int_0^\infty \intXY
		\partial_t \theta(t) \xi(x) \dd \rho(t,x,y)	
		\\
		&=
		\int_0^\infty \intXY
		\theta(t)
		\nabla \xi(x)\cdot\nabla_X [V - \Pi]\dd \rho(t,x,y)
		-
		\kappa
		\int_0^\infty \intXY
		\theta(t)
		\Delta_X \xi(x) \dd\rho(t,x,y).
		\\
		&=
		\int_0^\infty \intX
		\theta(t)
		\nabla \xi(x)\cdot
		\left[
		-   \mu\nabla \Pi	
		+
		\int_Y \nabla_X V(\cdot, y) \rhod(\cdot, \cdot, y)
		\dd \nu(y)
		\right]
		-
		\kappa
		\int_0^\infty \intX
		\theta(t)
		\Delta_X \xi(x) \dd\mu(x) \dd t,
		\end{align*}
		and the claim follows from the arbitrariness of $\theta$.
	\end{proof}
	
		\begin{lemma}
		\label{lemma:dissipation0}
		Let $(\rho_t, \Pi_t)_t$ be a limit trajectory as in Lemma \ref{lemma:convergence-rho-pi} and $\rhod_t\in L^1(X\times Y, \Lebesgue \otimes \nu)$ denote the density of $\rho_t$ with respect to $\Lebesgue \otimes \nu$.
		Define the  dissipation of the energy $\nrg$ as: 
		\begin{equation}
		\mathcal{I}(\rho_t, \Pi_t)
		:=
		\int_{X\times Y}
		|\nabla_X (V - \Pi_t + \kappa \log \rhod_t)|^2 \dd \rho_t.
		\end{equation}
		Then for all $0 \le t_1 < t_2 \le \infty$ the integral of the dissipation is lower-semicontinuous along the converging subsequences in Lemma \ref{lemma:convergence-rho-pi}, i.e.:
		\begin{equation}
		\int_{t_1}^{t_2} \mathcal{I}(\rho_t, \Pi_t) \dd t 
		\le 
		\liminf_{k\to\infty}
		\int_{t_1}^{t_2} \mathcal{I}((\bar{\rho}_{n})_t, ( \bar{\Pi}_n)_t) \dd t.		
		\end{equation}
		In particular,
		\begin{equation}
		0 
		\le 
		\nrg(\rho_{t_1})
		+
		\int_{t_1}^{t_2} \mathcal{I}(\rho_t, \Pi_t) \dd t 
		\le 
		\nrg(\rho^0).
		\label{eq:integral-dissipation}
		\end{equation}
	\end{lemma}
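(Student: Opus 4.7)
The plan is to pass to the $\liminf_{\tau\to0}$ in the discrete energy--dissipation estimate \eqref{eq:dissipation} with $k_1=\lfloor t_1/\tau\rfloor$ and $k_2=\lfloor t_2/\tau\rfloor$. Rewriting the sum as $\int_{t_1}^{t_2}\mathcal{I}(\bar\rho_\tau(t),\bar\Pi_\tau(t))\,\dd t$ up to an error $O(\tau)$, the energy contribution is handled by the pointwise weak-$*$ convergence $\bar\rho_\tau(t_1)\weakto\rho_{t_1}$ from Lemma~\ref{lemma:convergence-rho-pi} together with the lower semicontinuity of $\nrg$ (Lemma~\ref{lemma:energy-lsc}). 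The non-trivial step is to establish lower semicontinuity of the dissipation integral.

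For this I would use a Benamou--Brenier-type convex duality. For any smooth vector field $\phi:[t_1,t_2]\times\overline X\times Y\to\R^d$ satisfying $\phi\cdot\nml\equiv0$ on $\partial X$, define
\begin{equation*}
\mathcal L_\phi(\rho,\Pi):=\int_{t_1}^{t_2}\intXY\bigl[\,2\phi\cdot\nabla_X V-2\phi\cdot\nabla_X\Pi-2\kappa\,\nabla_X\!\cdot\phi-|\phi|^2\,\bigr]\,\dd\rho\,\dd t.
\end{equation*}
Whenever the density $r$ is positive with an $L^1$ $X$-gradient---true for the discrete iterates by Lemmas~\ref{lem:steppositive} and~\ref{lemma:bound-X-derivative-rho}---an integration by parts in the divergence term, using the boundary condition on $\phi$, recasts the bracket as $2\phi\cdot v-|\phi|^2$, where $v:=\nabla_X(V-\Pi+\kappa\log r)$. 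The pointwise Young inequality $2\phi\cdot v-|\phi|^2\le|v|^2$ then yields $\mathcal L_\phi(\rho,\Pi)\le\int_{t_1}^{t_2}\mathcal{I}(\rho_t,\Pi_t)\,\dd t$, with equality upon taking $\sup_\phi$ by $L^2(\dd\rho)$-density of smooth fields approximating $v$.

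For each fixed $\phi$, I claim $\mathcal L_\phi(\bar\rho_\tau,\bar\Pi_\tau)\to\mathcal L_\phi(\rho,\Pi)$. The terms involving $\nabla_X V$, $\nabla_X\!\cdot\phi$, and $|\phi|^2$ are integrals of fixed continuous functions against $\bar\rho_\tau$ and converge by weak-$*$ convergence. The delicate term rewrites as
\begin{equation*}
\intXY\phi\cdot\nabla_X\bar\Pi_\tau\,\dd\bar\rho_\tau=\int_X\nabla_X\bar\Pi_\tau(t,x)\cdot\omega_\tau^\phi(t,x)\,\dd x,\qquad\omega_\tau^\phi(t,x):=\int_Y\phi(t,x,y)\bar r_\tau(t,x,y)\,\dd\nu(y),
\end{equation*}
and Lemma~\ref{lemma:convergence-rho-pi} applied componentwise with $\zeta$ equal to the components of $\phi$ gives strong $L^2([t_1,t_2]\times X)$ convergence $\omega_\tau^\phi\to\omega^\phi$, which paired with the weak $L^2$ convergence of $\nabla_X\bar\Pi_\tau$ produces convergence of the product. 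Combining the discrete duality bound with this limit, taking $\sup_\phi$, and invoking the equality from the previous paragraph for the limit:
\begin{equation*}
\int_{t_1}^{t_2}\mathcal{I}(\rho_t,\Pi_t)\,\dd t=\sup_\phi\mathcal L_\phi(\rho,\Pi)\le\liminf_{\tau\to0}\int_{t_1}^{t_2}\mathcal{I}(\bar\rho_\tau,\bar\Pi_\tau)\,\dd t.
\end{equation*}

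The hard part is the equality $\int\mathcal{I}(\rho,\Pi)\,\dd t=\sup_\phi\mathcal L_\phi(\rho,\Pi)$ for the limit, which requires that the integration by parts be valid in the limit and that $v\in L^2(\dd\rho)$. These are obtained by passing to the $\liminf$ in the Fisher information part of \eqref{eq:L2-bound-phi}, which yields $\nabla_X\sqrt r\in L^2_{\mathrm{loc}}(\Rnn;L^2(X\times Y))$ and hence $\nabla_X r=2\sqrt r\,\nabla_X\sqrt r\in L^1_{\mathrm{loc}}$ in the appropriate sense, combined with the finiteness of the limiting dissipation (a consequence of the one-sided inequality already established). Once the dissipation LSC is in hand, the upper bound in \eqref{eq:integral-dissipation} follows from \eqref{eq:dissipation} together with the energy LSC via $\liminf A+\liminf B\le\liminf(A+B)$; the lower bound is immediate since $\nrg$ and $\mathcal{I}$ are non-negative.
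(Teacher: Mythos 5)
Your proposal is correct in its architecture but follows a genuinely different route from the paper. The paper expands the square $|\nabla_X(V-\Pi_n+\kappa\log\rhod_n)|^2$ into three pieces and treats each separately: the term $\int|\nabla_X(V+\kappa\log\rhod_n)|^2\dd\rho_n$ is rewritten via the substitution $\sigma_n=\rhod_n e^{V/\kappa}$ as $4\kappa^2\int e^{-V/\kappa}|\nabla_X\sqrt{\sigma_n}|^2$ and argued to be lower semicontinuous; the cross term is shown to be \emph{continuous} along the sequence, because integrating $\nabla_X\rhod_n$ over $Y$ produces the fixed function $\nabla\mu$ and the remaining piece is exactly the strong-times-weak pairing $\omega_\tau\nabla\bar\Pi_\tau\weakto\omega\nabla\Pi$ of Lemma~\ref{lemma:convergence-rho-pi}; and $\int|\nabla\Pi_n|^2\dd\mu$ is weakly lower semicontinuous. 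You instead package the entire dissipation into a single convex-duality (Benamou--Brenier-type) representation $\sup_\phi\mathcal L_\phi$, in which the logarithmic gradient is eliminated by an integration by parts, and then pass to the limit in $\mathcal L_\phi$ for each fixed $\phi$. The two arguments rest on the same two pillars --- the product convergence \eqref{eq:convergence-rho-pi-product} for the pressure term and a Fisher-information-type lower semicontinuity for the entropic term --- but your formulation has the advantage of treating them uniformly and of making the required regularity of the limit an \emph{output} of the duality rather than an input.

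On that last point, which you rightly flag as the hard step: be careful with how you obtain $\nabla_X\sqrt{\rhod}\in L^2$ for the limit. Passing \eqref{eq:L2-bound-phi} to the $\liminf$ requires identifying the weak $H^1_x$ limit of $\sqrt{\bar\rhod_\tau}$ with $\sqrt{\rhod}$, and Lemma~\ref{lemma:convergence-rho-pi} only provides strong convergence of the \emph{vertical averages}, not of $\bar\rhod_\tau$ itself; note also that the paper's own bound \eqref{eq:bound-rho-limit} is derived \emph{from} \eqref{eq:integral-dissipation}, so citing Lemma~\ref{lemma:regularity-weak-solutions} here would be circular. The clean way out, consistent with your duality framework, is to deduce the regularity directly from the finiteness of $\sup_\phi\mathcal L_\phi(\rho,\Pi)$ (already bounded by the established $\liminf$): the quadratic-in-$\phi$ structure shows that $\phi\mapsto\int[2\phi\cdot(\nabla_XV-\nabla_X\Pi)-2\kappa\,\nabla_X\!\cdot\phi]\dd\rho\,\dd t$ is a bounded functional on $L^2(\dd\rho\,\dd t)$, hence represented by some $w\in L^2(\dd\rho\,\dd t)$; the resulting identity $-\kappa\int(\nabla_X\!\cdot\phi)\dd\rho=\int\phi\cdot(w-\nabla_XV+\nabla_X\Pi)\dd\rho$ for all admissible $\phi$ is precisely the statement that $\kappa\nabla_X\rhod=(w-\nabla_XV+\nabla_X\Pi)\rhod$ weakly with no boundary term, which identifies $w$ with $\nabla_X(V-\Pi+\kappa\log\rhod)$ and $\sup_\phi\mathcal L_\phi$ with $\int\mathcal I\,\dd t$. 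With this amendment your argument is complete; the final step deriving \eqref{eq:integral-dissipation} from \eqref{eq:dissipation} and the two lower semicontinuities matches the paper.
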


\begin{proof}
	For simplicity in this proof we will remove the bars from $\bar{\rho}^n$ and $\bar{\Pi}^n$; simply note that they refer to the piecewise constant  discrete trajectories. Then:
	
	\begin{align}
		\nonumber
		\int_{t_1}^{t_2}&\mathcal{I}((\rho_n)_t, (\Pi_n)_t) \dd t
		=
		\int_{t_1}^{t_2}\int_{X\times Y}
		|\nabla_X (V - \Pi_n 
		+\kappa \log \rhod_n)|^2  \dd\rho_n 
		\\
		\label{eq:dissipation-1}
		&=
		\int_{t_1}^{t_2}\int_{X\times Y}
		|\nabla_X (V + \kappa \log \rhod_n)|^2 \dd \rho_n
		\\
		\label{eq:dissipation-2}
		&\quad
		-
		\int_{t_1}^{t_2}\int_{X\times Y}
		2
		\nabla_X (V + \kappa \log \rhod_n) \cdot \nabla_X \Pi_n \dd\rho_n 
		\\
		\label{eq:dissipation-3}
		&\quad
		+
		\int_{t_1}^{t_2}\int_{X}
		|\nabla \Pi_n|^2 \dd t \dd \mu.
	\end{align}
	For \eqref{eq:dissipation-1}, defining $\sigma_n := \rhod_n e^{V/\kappa}$ we have
	\begin{align*}
		\int_{t_1}^{t_2}\int_{X\times Y}
		|\nabla_X (V + \kappa \log \rhod_n)|^2 \dd \rho_n
		&=
		\kappa^2
		\int_{t_1}^{t_2}	\int_{X\times Y}
		|\nabla_X \sigma_n|^2 \sigma_n e^{-V/k} \dd x \dd \nu(y)
		\\
		&=
		4\kappa^2
		\int_{t_1}^{t_2}\int_{X\times Y}
		e^{-V/k}
		|\nabla_X \sqrt{\sigma_n}|^2\dd x \dd \nu(y).
	\end{align*}
	which is lower-semicontinuous, since $\sigma_n$ inherits $\rho_n$'s regularity properties outlined in Lemma \ref{lemma:regularity-weak-solutions}. The next term \eqref{eq:dissipation-2} can be rewritten as follows: 
	\begin{align*}
		\int_{t_1}^{t_2}\int_{X\times Y}
		&
		\nabla_X (V + \kappa \log \rhod_n) \cdot \nabla_X \Pi_n \dd \rho_n 
		=
		\\
		&=
		\int_{t_1}^{t_2}\int_{X} \nabla \Pi_n \cdot \left[
		\int_Y \rho_n \nabla_X V \dd \nu(y)
		\right] \dd t\dd x
		+
		\kappa 
		\int_{t_1}^{t_2}\int_{X} \nabla \Pi_n \cdot
		\nabla \log \mu \dd \mu,
	\end{align*}
	where the first contribution converges thanks to Lemma \ref{lemma:stability-weak-solutions} and the second by sheer weak convergence of $\nabla (\Pi_n)_t$ (plus the fact that $\log\mu\in H^1(X,\mu)$, thanks to the finite Fisher information of $\mu$). The last term \eqref{eq:dissipation-3} is lower-semicontinuous under weak convergence of $(\Pi_n)_n$ in $L^2([t_1, t_2]; H^1(X, \mu))$. 
	
	Finally, \eqref{eq:integral-dissipation} is a direct consequence of \eqref{eq:dissipation} and the lower-semicontinuity of both energy and the integrated dissipation.
\end{proof}

	\begin{lemma}
		\label{lemma:regularity-weak-solutions}
		The weak solutions $(\rho, \Pi)$ of \eqref{eq:weaktxy}-\eqref{eq:weak-potential}
		extracted as the limit of the minimizing movement scheme in Lemma \ref{lemma:convergence-rho-pi} satisfy, for all $0<t_1 <t_2 < \infty$:
		\begin{align}
		\label{eq:equicontinuity-limit}
		\wass^2(\rho_{t_1}, \rho_{t_2})
		\le 
		\wassF^2(\rho_{t_1}, \rho_{t_2})
		&\le 
		\nrg(\rho_0)(t_2 - t_1).
		\\
		\label{eq:bound-pi-limit}
		\int_{t_1}^{t_2}
		\int_X \big|\nabla\Pi_t\big|^2 \dd t\dd \mu(x) 
		&\le
		C(t_2 - t_1).
		\\
		\label{eq:bound-rho-limit}
		2\kappa^2\int_{t_1}^{t_2} \int_{X\times Y}\big|\nabla_X\sqrt{\rhod_t}\big|^2\dd t\dd x \dd \nu(y)
		&\le 
		\nrg(\rho^0) + C (t_2 - t_1).
		\end{align}		
		with $C$ a finite constant depending only on $\|\nabla_X V\|_\infty$, $\|\nabla \sqrt\mu\|_{2}$ and $\kappa$.
		
	\end{lemma}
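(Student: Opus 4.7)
All three bounds are the continuous-time counterparts of the discrete estimates established in Section~\ref{sec:minimizing-movement}, and I would obtain them either by passing to the $\tau \to 0$ limit along the subsequence supplied by Lemma~\ref{lemma:convergence-rho-pi} or by directly manipulating the limit equations \eqref{eq:weaktxy}--\eqref{eq:weak-potential}.

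For \eqref{eq:equicontinuity-limit}, the inequality $\wass \le \wassF$ is Lemma~\ref{lemma:order-wass}. For the upper bound, the key fact is that $\wassF^2$ is jointly weak* lower-semicontinuous in its two marginals, which follows from the variational characterization \eqref{eq:wassF1}: if $\rho_1^n \weakto \rho_1$ and $\rho_2^n \weakto \rho_2$ in $\prob(X\times Y)$ with common second marginal, and $\gamma^n$ is optimal in \eqref{eq:wassF1} for $(\rho_1^n, \rho_2^n)$, then any weak* cluster point $\gamma$ of $(\gamma^n)_n$ is feasible for $(\rho_1,\rho_2)$ by weak*-continuity of projections, and $\gamma \mapsto \int |x-x'|^2 \dd \gamma$ passes to the limit by continuity and boundedness of the cost on the compact $X \times X \times Y$. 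Applied to $\bar\rho_\tau(t_i) = \rho_\tau^{\lfloor t_i/\tau\rfloor}$ together with Lemma~\ref{lem:equicontinuity-rho} and the convergence \eqref{eq:convergence-rho}, this yields the bound.

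For \eqref{eq:bound-pi-limit}, rather than passing to the limit in \eqref{eq:L2-bound-phi}, I would test the limit elliptic equation \eqref{eq:weak-potential} directly with $\xi = \Pi_t$. After integrating the entropic term by parts (using $\mu \in H^1(X)$, which is guaranteed by \eqref{eq:fisher-info-mu}), the equation reads
\begin{equation*}
\int_X |\nabla \Pi_t|^2 \dd \mu
= \int_X \nabla \Pi_t \cdot g_t \dd x + \kappa \int_X \nabla \Pi_t \cdot \nabla \mu \dd x,
\end{equation*}
with $g_t(x) := \int_Y \nabla_X V(x,y) \rhod(t,x,y) \dd \nu(y)$ dominated pointwise by $\|\nabla_X V\|_\infty \mu(x)$. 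Cauchy--Schwarz, the equivalence of the $L^2(X)$ and $L^2(X,\mu)$ norms (boundedness of $\mu$), and Young's inequality produce a time-uniform bound $\|\nabla \Pi_t\|_{L^2(\mu)}^2 \le C$; integration on $[t_1, t_2]$ concludes.

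Finally, for \eqref{eq:bound-rho-limit} I would combine the dissipation bound $\int_{t_1}^{t_2} \mathcal{I}(\rho_t, \Pi_t) \dd t \le \nrg(\rho^0)$ from Lemma~\ref{lemma:dissipation0} with the elementary inequality $|u-v|^2 \ge \tfrac{1}{2}|u|^2 - |v|^2$ applied to $u = \kappa \nabla_X \log \rhod_t$ and $v = \nabla_X V - \nabla \Pi_t$. Integrating against $\dd \rho_t$, using $\int_{X\times Y} |\nabla_X \log \rhod_t|^2 \dd \rho_t = 4 \int_{X\times Y} |\nabla_X \sqrt{\rhod_t}|^2 \dd x \dd \nu$ and the marginal identity $\int_{X\times Y} |\nabla \Pi_t|^2 \dd \rho_t = \int_X |\nabla \Pi_t|^2 \dd \mu$, one arrives at
\begin{equation*}
\mathcal{I}(\rho_t, \Pi_t) \ge 2\kappa^2 \intXY |\nabla_X \sqrt{\rhod_t}|^2 \dd x \dd \nu(y) - 2\|\nabla_X V\|_\infty^2 - 2\int_X |\nabla \Pi_t|^2 \dd \mu,
\end{equation*}
and time-integration combined with \eqref{eq:bound-pi-limit} closes the estimate. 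The main technical point in the whole argument is the density argument legitimizing the choice $\xi = \Pi_t$ in \eqref{eq:weak-potential}, which relies on the smoothness of $\partial X$ and the bounds on $\mu$; once that is in place, the remaining manipulations are routine.
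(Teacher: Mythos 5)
Your treatment of \eqref{eq:bound-pi-limit} and \eqref{eq:bound-rho-limit} matches the paper's: the pressure bound is obtained exactly as in the paper by testing \eqref{eq:weak-potential} with (an approximation of) $\Pi_t$, and your lower bound on the dissipation via $|u-v|^2\ge\tfrac12|u|^2-|v|^2$ is just a more compact packaging of the paper's term-by-term expansion of $\mathcal I(\rho_t,\Pi_t)$ combined with \eqref{eq:integral-dissipation} and \eqref{eq:bound-pi-limit}; both are correct and equivalent up to constants.

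Where you genuinely diverge is \eqref{eq:equicontinuity-limit}. You pass the discrete H\"older estimate of Lemma \ref{lem:equicontinuity-rho} to the limit using joint weak* lower semicontinuity of $\wassF^2$, deduced from the lifted formulation \eqref{eq:wassF1}; the lsc argument itself is sound (feasibility of cluster points of the optimal $\gamma^n$ plus continuity of the bounded cost on the compact product, and the inequality $\wassF^2\le\int|x-x'|^2\dd\gamma$ holds for any feasible $\gamma$ by disintegration). The paper instead works directly on the limit trajectory: it builds the velocity field $v_s=(t_2-t_1)\nabla_X[V-\pi_s+\kappa\log\varrhod_s]$, applies the Benamou--Brenier inequality fiberwise, and bounds the resulting action by $\int_{t_1}^{t_2}\mathcal I(\rho_t,\Pi_t)\dd t\le\nrg(\rho^0)$ from Lemma \ref{lemma:dissipation0}. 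The quantitative consequence of your choice: Lemma \ref{lem:equicontinuity-rho} gives $\wassF(\rho_\tau^m,\rho_\tau^n)^2\le 2\nrg(\rho^0)\tau|m-n|$, so your route delivers $\wassF^2(\rho_{t_1},\rho_{t_2})\le 2\nrg(\rho^0)(t_2-t_1)$, i.e.\ twice the constant stated in \eqref{eq:equicontinuity-limit}. The extra factor comes from the crude comparison $\wassF^2\le 2\tau(\nrg(\rho^{n-1})-\nrg(\rho^n))$, whereas the paper's argument exploits the exact identity \eqref{eq:velocity} between the squared step length and the dissipation. This is a constant, not a structural gap, but as written your proof does not recover the inequality in the form stated; to get the sharp constant you would either have to follow the Benamou--Brenier route or pass to the limit in the refined dissipation sum \eqref{eq:dissipation} rather than in Lemma \ref{lem:equicontinuity-rho}.
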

	
	\begin{remark}
		Even though the weak solutions constructed in Lemma \ref{lemma:convergence-rho-pi} can (a priori) only be shown  to be continuous in the $\wass$ metric, a posteriori Lemma \ref{lemma:regularity-weak-solutions} yields regularity in time with respect to the $\wassF$ metric.
	\end{remark}
	
	\begin{proof}[Proof of Lemma \ref{lemma:regularity-weak-solutions}]
		For \eqref{eq:equicontinuity-limit} we will use the Benamou-Brenier formula fiberwise. For given $t_1 < t_2$, and for each $s \in [0,1]$ consider $\varrho_s \assign  \rho((1-s)t_1 + s t_2)$ with density given by $\varrhod_s\in L^1(X\times Y, \Lebesgue \otimes \nu)$  and $\pi_s \assign \Pi((1-s)t_1 + s t_2)$. For each $y\in Y$, define the velocity field: 
		\begin{equation}
			v_s : (x,y) \mapsto (t_2 - t_1)\nabla_X [
			V(x,y) - \pi_s(x) + \kappa \log \varrhod_s(x,y)
			],
		\end{equation} 
		
		The pair $(\varrhod_s, v_s)$ thus constructed is a feasible weak solution for the  continuity equation:
		\begin{equation}
		\begin{gathered}
			\partial_s \varrhod_s(x, y) + \nabla_X \cdot (\varrhod_s(x, y) v_s(x,y)) = 0\\
			\nabla_X (\varrhod_s v_s)= 0
			\quad
			\text{on }\partial X
			\end{gathered}
		\end{equation}
		
		Now, by the Benamou-Brenier formula,
		\begin{align*}
			\wassF(\varrho_0, \varrho_1)^2
			&=
			\int_Y \wass (\varrhod_0(\cdot, y), \varrhod_0(\cdot, y))
			\dd \nu(y)
			\le 
			\int_Y
			\left[
			\int_0^1 
			\int_X
			|v_s(x,y)|^2 \varrhod_s(x,y)  \dd x\dd s
			\right] \dd \nu(y)
			\\
			&=
			\int_0^1 
			\intXY
			|v_s(x,y)|^2 
			\dd \varrho_s(x,y) \dd s
			\\
			&=
			(t_2 - t_1)^2
			\int_0^1 
			\intXY
			|\nabla_X [
			V(x,y) - \pi_s(x) 
			+ \kappa \log \varrhod_s(x,y)
			]|^2 
			\dd \varrho_s(x,y) \dd s,
		\end{align*}
		and after a change of variables we eventually obtain:
		\begin{align*}
		\wassF(\rho(t_1), \rho(t_2))^2
		&=
		(t_2 - t_1)
		\int_{t_1}^{t_2}
		\intXY
		|\nabla_X [
		V(x,y) - \Pi_t(x) + \kappa \log \rhod_t(x,y)
		]|^2 
		\dd \rho_t(x,y) \dd t	
		\\
		&
		\le 
		(t_2 - t_1) \nrg(\rho^0),
		\end{align*}
		where in the last inequality we used Lemma \ref{lemma:dissipation0}. 
				
		For \eqref{eq:bound-pi-limit} we test \eqref{eq:weak-potential} with (an approximating sequence of) $\Pi_t$ and integrate by parts to obtain:
		\begin{align*}
		\intX &|\nabla \Pi_t|^2 \dd \mu(x)
		=
		\intXY \nabla \Pi_t \cdot \nabla_X V \dd \rho_t
		+
		\kappa 
		\intX \nabla \Pi_t \cdot \nabla \log \mu \dd \mu(x)
		\\
		&\le 
		\|\nabla_X V\|_\infty
		\left(
		\intXY |\nabla \Pi_t|^2 \dd \rho_t
		\right)^{1/2}
		+
		\kappa 
		\left(
		\intXY |\nabla \Pi_t|^2 \dd \mu
		\right)^{1/2}
		\left(
		\intXY |\nabla \log\mu|^2 \dd \mu
		\right)^{1/2},
		\end{align*}
		where we used Cauchy-Schwartz. Then, using that $\|\nabla \log \mu \|_{L^2(X, \mu)} =2\|\nabla \sqrt\mu \|_{L^2(X)}$ and simplifying one $\|\nabla\Pi_t\|_{L^2(X,\mu)}$ term yields:
		\begin{equation}
		\label{eq:final-bound-pit}
		\left(
		\intXY |\nabla \Pi_t|^2 \dd \mu
		\right)^{1/2}
		\le 
		\|\nabla_X V\|_\infty
		+
		2\kappa 
		\|\nabla \sqrt\mu \|_{2},
		\end{equation}
		and \eqref{eq:bound-pi-limit} follows by integration in time.
		
		To obtain \eqref{eq:bound-rho-limit} we expand \eqref{eq:integral-dissipation}:
		\begin{align}
		\label{eq:expand-dissipation-1}
		\nrg(\rho_0) 
		&\ge 
		\int_{t_1}^{t_2}
		\mathcal{I}(\rho_t, \Pi_t) \dd t
		=
		\int_{t_1}^{t_2}
		\int_{X\times Y}
		|\nabla_X (V - \Pi + \kappa \log \rhod)|^2 \dd \rho	
		\\
		\label{eq:expand-dissipation-2}
		&=
		\int_{t_1}^{t_2}
		\int_{X\times Y}
		|\nabla_X V|^2 \dd \rho	
		+
		\int_{t_1}^{t_2}
		\int_{X}
		|\nabla\Pi|^2 \dd \mu \dd t	
		+
		\kappa^2
		\int_{t_1}^{t_2} \intXY
		|\nabla_X \log \rhod|^2 \dd \rho
		\\
		&
		\label{eq:expand-dissipation-3}
		\quad
		+2
		\int_{t_1}^{t_2}
		\int_{X\times Y}		
		\left[
		\kappa
		\nabla_X V\cdot \nabla_X \log \rhod  
		-
		\kappa
		\nabla_X \log\rhod\cdot \nabla_X \Pi  
		-
		\nabla_X V\cdot \nabla_X \Pi 
		\right]
		\dd \rho
		\end{align}
		To bound the Fisher information of $\rho$ we solve for the last term in \eqref{eq:expand-dissipation-2}:
		\begin{align*}
		4\kappa^2
		\int_{t_1}^{t_2} \intXY
		|\nabla_X \sqrt{\rhod}|^2 \dd x\dd \nu(y)
		&\le
		\nrg(\rho^0)
		-2
		\int_{t_1}^{t_2}
		\int_{X\times Y}
		\kappa
		\nabla_X V\cdot \nabla_X \log \rhod 
		\dd \rho
		\\
		&\quad 
		+
		2
		\int_{t_1}^{t_2}
		\int_{X\times Y}
		\left[
		\kappa
		\nabla_X \log\rhod\cdot \nabla_X \Pi  
		-
		\nabla_X V\cdot \nabla_X \Pi 
		\right]
		\dd \rho
		\end{align*}
		
		Now all the inner products are straightforward to bound:
		\begin{align*}
		-2\kappa
		\int_{t_1}^{t_2}
		\int_{X\times Y}&
		\nabla_X V\cdot \nabla_X \log \rhod
		\dd \rho
		\\
		&\le 
		2\kappa
		\int_{t_1}^{t_2}
		\left(
		\int_{X\times Y}
		|\nabla_X V|^2
		\dd \rho_t
		\right)^{1/2}
		\left(
		\int_{X\times Y}
		|\nabla_X \log \rhod|^2  
		\dd \rho_t
		\right)^{1/2}
		\dd t
		\\
		&\le
		4\kappa 
		\|\nabla_X V \|_\infty
		\sqrt{t_2 - t_1}
		\left(
		\int_{t_1}^{t_2}
		\int_{X\times Y}
		|\nabla_X \sqrt{\rhod}|^2  
		\dd x \dd \nu(y)
		\dd t
		\right)^{1/2}
		\\
		&\le 
		2\|\nabla_X V \|_\infty^2 (t_2 - t_1)
		+
		2\kappa^2 
		\int_{t_1}^{t_2}
		\int_{X\times Y}
		|\nabla_X \sqrt{\rho}|^2 
		\dd x \dd \nu(y) \dd t,
		\end{align*}
		where we used $ab \le a^2/2 + b^2/2$ for $a = 2\|\nabla_X V \|_\infty
		\sqrt{t_2 - t_1}$ and $b$ encompassing the rest. For the next term:
		
		\begin{align*}
		2\kappa\int_{t_1}^{t_2}
		\int_{X\times Y}
		\nabla_X \log\rhod\cdot \nabla_X \Pi  
		\dd \rho
		&\le
		2\kappa
		\int_{t_1}^{t_2}
		\intX
		\nabla \mu\cdot \nabla \Pi 
		\dd x
		\le 
		2\kappa
		\int_{t_1}^{t_2}
		\intX
		\nabla \log \mu \cdot \nabla \Pi 
		\dd \mu
		\\
		&\le 
		4\kappa
		\left(
		\int_{t_1}^{t_2}
		\intX
		|\nabla \Pi|^2
		\dd \mu
		\right)^{1/2}
		\sqrt{t_2 - t_1}
		\left(
		\intX
		|\nabla\sqrt{\mu}|^2
		\dd x
		\right)^{1/2}
		\\
		&\le 
		2\kappa
		\int_{t_1}^{t_2}
		\intX
		|\nabla \Pi|^2
		\dd \mu
		+
		2\kappa
		(t_2 - t_1)
		\intX
		|\nabla\sqrt{\mu}|^2
		\dd x,
		\end{align*}
		where we used a similar bound. For the remaining term:
		\begin{align*}
		2\int_{t_1}^{t_2}
		\int_{X\times Y}
		\nabla_X V\cdot \nabla_X \Pi  
		\dd \rho
		&\le	
		2
		\|\nabla_X V \|_\infty
		\sqrt{t_2 - t_1}
		\left(
		\int_{t_1}^{t_2}
		\intX
		|\nabla \Pi|^2
		\dd \mu
		\right)^{1/2}
		\\
		&\le 
		\|\nabla_X V \|_\infty^2
		(t_2-t_1)
		+
		\int_{t_1}^{t_2}
		\intX
		|\nabla \Pi|^2
		\dd \mu		
		\end{align*}
		
		Finally, collecting all the contributions and using \eqref{eq:bound-pi-limit} yields the desired bound.
	\end{proof}

	\section{Stability of weak solutions}
	\label{sec:stability}
	
	In the following sections, for a weak solution $(\rho, \Pi)$ of \eqref{eq:weaktxy}-\eqref{eq:weak-potential} we will denote by $\rho_t$ the measure $\rho(t)$ (which is for a.e.~$t$ a feasible coupling in $\Gamma(\mu, \nu)$ with density with respect to $\Lebesgue\otimes \nu$ given by $r_t\in L^1(X\times Y , \Lebesgue \otimes \nu)$), and analogously we will denote by $\Pi_t$ the function $\Pi(t)$.
	
	We start with a preliminary result that will be useful below: 
	\begin{lemma}[Stability of solutions]
		\label{lemma:stability-weak-solutions}
		Let $(\mu_n)_n\subset \prob(X)$ be a sequence with uniformly bounded Fisher information $\|\nabla \sqrt{\mu_n}\|_2 \le C < \infty$ and uniformly bounded from above and away from zero, converging strongly to $\mu$ in $L^2(X)$, and let $(\nu_n)_n\subset\prob(Y)$ converge to $\nu$ weak*. For each $n\in \mathbb{N}$ let $(\rho_n, \Pi_n)$ be a weak solution to the continuity equation \eqref{eq:weaktxy}-\eqref{eq:weak-potential}
		with initial condition $\rho_n^0 \in \Gamma(\mu_n, \nu_n)$ and satisfying the estimates in Lemma \ref{lemma:regularity-weak-solutions}.
		Further assume that $(\rho_n^0)_n$ converges weak* to $\rho^0\in \Gamma(\mu, \nu)$, and that $\nrg(\rho^0_n \mid \mu_n, \nu_n)$ is uniformly bounded in $n$. 
		
		Then, up to the extraction of a subsequence, $(\rho_n)_n$ converges on compact sets to a weak solution of \eqref{eq:weaktxy}-\eqref{eq:weak-potential} with initial condition $\rho^0$. More precisely, for each $T>0$,
		\begin{align}
		\label{eq:convergence-rho-limit}
		\rho_n &\weakto \rho\quad \text{locally uniformly w.r.t.~$t\in \Rnn$ in $\wass$,}\\
		\label{eq:convergence-pi-limit}
		\Pi_n &\weakto \Pi\quad \text{weakly in } L^2([0,T]; H^1(X)).
		\end{align}	
		
		Moreover, define for any given $\zeta\in C^1_c(\Rnn\times X \times Y)$:
		\begin{equation}
		\label{eq:def_omega-limit}
		\omega_n(t,x) := \int_Y \zeta(t, x,y)\rhod_n(t, x, y)\dd \nu(y)
		,
		\qquad
		\omega(t,x) := \int_Y \zeta(t, x,y)\rhod(t, x, y)\dd \nu(y).
		\end{equation}
		where $\rhod_n \in L^1(X\times Y, \Lebesgue \otimes \nu_n)$ and $\rhod \in L^1(X\times Y, \Lebesgue \otimes \nu)$ denote respectively the densities of $\rho_n$ and $\rho$.
		Then, along the same sequence as above,
		\begin{align}
		\label{eq:convergence-rho-average-limit}
		\omega_n&\to 
		\omega
		\quad \text{strongly in }L^2([0,T]\times X),
		\\
		\label{eq:convergence-rho-pi-product-limit}
		\omega_n\nabla\Pi_n &\weakto 
		\omega\nabla\Pi
		\quad 
		\text{weakly in }L^1([0,T]\times X).
		\end{align}
	\end{lemma}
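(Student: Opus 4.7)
The plan is to mirror, step by step, the convergence analysis carried out in Lemma \ref{lemma:convergence-rho-pi}, with the discrete iterates $(\bar\rho_\tau,\bar\Pi_\tau)$ replaced by the continuous weak solutions $(\rho_n,\Pi_n)$, and using the regularity estimates of Lemma \ref{lemma:regularity-weak-solutions} instead of those of Lemma \ref{lemma:bound-X-derivative-rho}. The crucial observation that enables this is that all uniform bounds from Lemma \ref{lemma:regularity-weak-solutions} depend only on $\nrg(\rho_n^0\mid\mu_n,\nu_n)$, $\|\nabla_X V\|_\infty$, $\|\nabla\sqrt{\mu_n}\|_2$ and $\kappa$, which are uniformly controlled by assumption.

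First I would extract cluster points. Equicontinuity \eqref{eq:equicontinuity-limit} together with Lemma \ref{lemma:order-wass} yields a uniform $\wass$-equicontinuity of $t\mapsto \rho_n(t)$; combined with compactness of $(\prob(X\times Y),\wass)$, the refined Ascoli--Arzelà of \cite[Proposition 3.3.1]{SavareGradientFlows} gives \eqref{eq:convergence-rho-limit} along a subsequence. Lower-semicontinuity of the marginal-dependent energy (Lemma \ref{lemma:energy-lsc}) applied to the uniform bound $\nrg(\rho_n(t)\mid\mu_n,\nu_n)\le\nrg(\rho_n^0\mid\mu_n,\nu_n)$ implies that $\rho(t)\in\Gamma(\mu,\nu)$ has a density $\rhod(t)$ w.r.t.~$\Lebesgue\otimes\nu$. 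For the potentials, \eqref{eq:bound-pi-limit} plus the uniform two-sided bound on $\mu_n$ yields a uniform $L^2([0,T];H^1(X))$ bound on $\Pi_n$ (weighted Poincaré applies because each $\Pi_n$ is normalized with zero $\mu_n$-mean and $\mu_n\to\mu$ strongly in $L^2$), giving \eqref{eq:convergence-pi-limit} up to a further subsequence.

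The next step is the Aubin--Lions argument for \eqref{eq:convergence-rho-average-limit}. The limit is identified through weak* convergence of $\rho_n$ in duality with continuous test functions, exactly as in \eqref{eq:identify-limit-omega}; notice that the varying second marginal causes no issue since $\omega_n$ and $\omega$ are defined intrinsically via $\rho_n$ and $\rho$. The $L^\infty$ bound $|\omega_n(t,x)|\le\|\zeta\|_\infty\mu_n(x)$ is uniform by the two-sided bound on $\mu_n$; the spatial $BV$-type bound reduces by the chain rule to $\int|\nabla_X\sqrt{\rhod_n}|^2 \dd x\dd\nu_n$, which is uniformly controlled by \eqref{eq:bound-rho-limit}; the time-equicontinuity in the $C^1(X)^*$ pseudo-distance follows from testing with $\xi\in C^1(X)$, applying the Kantorovich--Rubinstein formula \eqref{eq:dual-wass1}, and invoking $\wass_1\le\wassF$ together with \eqref{eq:equicontinuity-limit}. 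Hence \cite[Theorem 2]{RossiSavare} yields strong convergence in $L^1$, improved to $L^2$ by the uniform $L^\infty$ bound. The weak convergence \eqref{eq:convergence-rho-pi-product-limit} is then immediate from strong $L^2$ convergence of $\omega_n$ against weak $L^2$ convergence of $\nabla\Pi_n$.

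Finally, one passes to the limit in the two weak equations \eqref{eq:weaktxy} and \eqref{eq:weak-potential} written for $(\rho_n,\Pi_n)$. The term $\int\partial_t\psi\dd\rho_n$, the potential term $\int\nabla_X\psi\cdot\nabla_X V\dd\rho_n$, and the Laplacian term $\int\Delta_X\psi\dd\rho_n$ all pass via weak* convergence of $\rho_n$; the initial-datum term converges by assumption on $\rho_n^0$; and the delicate product $\int\nabla_X\psi\cdot\nabla_X\Pi_n\dd\rho_n$ is handled by rewriting it as $\int_X\nabla_X\Pi_n\cdot\omega_n^{\psi}\dd x$ with $\omega_n^\psi$ the vertical average of $\nabla_X\psi\,\rhod_n$ against $\nu_n$, and then invoking \eqref{eq:convergence-rho-pi-product-limit}. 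For \eqref{eq:weak-potential}, the only non-trivial term is the one containing $\int_Y\nabla_X V\rhod_n\dd\nu_n$, which again is of the form $\omega_n$ and converges strongly in $L^2$. The main obstacle in this program is the $L^2$-strong convergence of the vertical averages $\omega_n$ in the presence of a varying second marginal $\nu_n$ and a varying reference measure $\mu_n$; all other steps are standard passages to the limit, justified by the uniform estimates inherited from Lemma \ref{lemma:regularity-weak-solutions}.
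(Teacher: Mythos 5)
Your proposal is correct and follows essentially the same route as the paper: Ascoli--Arzelà via the equicontinuity estimate \eqref{eq:equicontinuity-limit}, lower-semicontinuity of the marginal-dependent energy to obtain the density of the limit, weighted Poincaré for the potentials, the Aubin--Lions argument of Lemma \ref{lemma:convergence-rho-pi} rerun with the uniform bounds of Lemma \ref{lemma:regularity-weak-solutions}, and the strong-times-weak pairing to pass to the limit in the product term. In fact you spell out the Aubin--Lions step with varying marginals $(\mu_n,\nu_n)$ in slightly more detail than the paper, which simply asserts that this "can be factored into the proof without major issues."
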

	
	\begin{proof}
		As in Lemma \ref{lemma:convergence-rho-pi}, for the convergence of the trajectories \eqref{eq:convergence-rho-limit} we employ the refined version of Ascoli-Arzela in \cite[Proposition 3.3.1]{SavareGradientFlows}: we use Lemma \ref{lemma:regularity-weak-solutions} for the equicontinuity of $\rho$ w.r.t.~$\wass$, while the pointwise compactness follows from the compactness of $(\prob(X\times Y), \wass)$ for $X$ and $Y$ compact. This yields existence of a $\wass$-continuous limit trajectory $\rho$, to which $(\rho_n)_n$ converges locally uniformly. Besides, $\rho_t$ has marginals $\mu$ and $\nu$ for all $t\ge 0$ by weak* convergence of $(\mu_n, \nu_n)$ to $(\mu, \nu)$ and continuity of the projection operators $\proj_X$ and $\proj_Y$.
		
		To show density of $\rho_t$ with respect to $\Lebesgue \otimes \nu$ note first that, by Lemma \ref{lemma:dissipation0}, $\nrg(\rho_n(t) \mid \mu_n, \nu_n) \le \nrg(\rho^0_n \mid \mu_n, \nu_n) \le C$, since by assumption the energy of the initial datum is uniformly bounded, and by Lemma \ref{eq:integral-dissipation} the energy is non-increasing.
		On the other hand, the joint lower-semicontinuity of the energy with respect to weak* convergence (Lemma \ref{lemma:energy-lsc}) implies:
		\begin{equation}
			\nrg(\rho(t) \mid \mu, \nu) 
			\le 
			\liminf_{n\to\infty} \nrg(\rho_n(t)\mid \mu_n, \nu_n)
			\le 
			\liminf_{n\to\infty} \nrg(\rho_n^0\mid \mu_n, \nu_n)
			\le C,
		\end{equation}
		which shows that $\rho(t)$ has a density with respect to $\Lebesgue \otimes \nu$ for all $t \ge 0$.
		
		Now, the bounds \eqref{eq:bound-pi-limit} and \eqref{eq:bound-rho-limit} are uniform in $n$, since they only depend on the energy of the initial datum, the Fisher information of $\mu_n$, $\kappa$ and $\|\nabla_X V\|_\infty$, all of which are uniformly bounded by assumption. In view of this, \eqref{eq:bound-pi-limit} together with the the weighted Poincaré inequality for functions with zero mean \cite{WeightedPoincare} imply the weak convergence of the potentials $\Pi_n$ to a limit trajectory $\Pi \in L^2([0, T]; H^1(X,\mu))$ for all $T > 0$. Further, the uniform control on the horizontal Fisher information of $(\rho^n)_n$ given by \eqref{eq:bound-rho-limit} implies convergence of the vertical averages \eqref{eq:convergence-rho-average-limit} and \eqref{eq:convergence-rho-pi-product-limit} in a similar fashion to the proof in Lemma \ref{lemma:convergence-rho-pi}. The main difference is that now the marginals $\mu_n$ and $\nu_n$ vary along the converging sequence, but this can be factored into the proof without major issues. 	
		
		The remaining step is to show that $\rho$ and $\Pi$ verify their respective continuity equations. Let $\psi \in C_c^\infty(\R_{+} \times X \times Y)$ with $\nabla_X \psi \cdot \mathbf{n} \equiv 0$ on $\partial X$, and test \eqref{eq:weakx} with $\psi$ for the weak solution $(\rho_n, \Pi_n)$:
		\begin{align}
		\int_{X\times Y}
		\psi(0) \dd \rho^0_n
		&=
		\int_{\Rnn\times X \times Y}
		\left[
		\nabla_X \psi\cdot\nabla_X V 
		- \nabla_X \psi\cdot\nabla\Pi^k 
		-
		\kappa
		\Delta_X \psi 
		-
		\partial_t \psi\right]
		\dd \rho^n.
		\label{eq:weak-formulation-stability}
		\end{align}
		The left-hand side converges by weak* convergence of the initial conditions. Likewise, the first, third and last term of the right-hand side converge by sheer weak* convergence of $\rho^n$ to $\rho$. The remaining term can be shown to converge in virtue of \eqref{eq:convergence-rho-pi-product-limit}: defining $(\mathbf{u}^k)_n$ and $\mathbf{u}$ as  
		\begin{equation}
		\mathbf{u}_n(t,x) := \int_Y \nabla_X\psi(t,x,y)\rhod_n(t, x, y)\dd \nu(y)
		,
		\qquad
		\mathbf{u}(t,x) := \int_Y \nabla_X \psi(t,x,y)\rhod(t, x, y)\dd \nu(y),
		\end{equation}
		and using \eqref{eq:convergence-rho-pi-product-limit} we obtain convergence also of the remaining term from \eqref{eq:weak-formulation-stability}:
		\begin{align*}
		\int_0^T
		\int_{ X \times Y}
		&
		\nabla_X \psi\cdot\nabla\Pi_n \dd \rho_n
		=
		\int_0^T
		\intX \nabla \Pi_n(x) \cdot  \mathbf{u}_n(t,x) \dd t \dd x
		\\
		&\to 
		\int_0^T
		\intX
		\nabla \Pi(x) \cdot  \mathbf{u}(t,x) \dd t \dd x
		=
		\int_0^T
		\int_{ X \times Y}
		\nabla_X \psi\cdot\nabla\Pi \dd\rho.
		\end{align*}
		where $T$ the threshold after which $\psi(t)$ is identically zero.
		
		Finally, the fact that $\Pi$ obeys its respective weak equation \eqref{eq:weak-potential} follows by testing \eqref{eq:weak-formulation-stability} with a test function with no dependence in $y$, plus noting that $\rho_n$ converges weak* to $\rho$ and that $\mu_n$ converges strongly to $\mu$ in $L^2(X)$:
		\begin{align*}
		\int_0^T\int_X \nabla_X \xi \cdot \nabla \Pi\, \dd t\dd \mu 
		&=
		\lim_n\int_0^T\int_X \nabla_X \xi \cdot \nabla \Pi_n\,\dd t \dd \mu_n
		\\
		&=
		\lim_n
		- 
		\kappa \int_0^T\int_X \Delta_X \xi\dd t
		\dd \mu_n
		+
		\int_0^T\int_{X\times Y}\nabla_X \xi
		\cdot 
		\nabla_X V  \dd \rho_n
		\\
		&=
		- 
		\kappa \int_0^T\int_X \Delta_X \xi\dd t
		\dd \mu
		+
		\int_0^T\int_{X\times Y}\nabla_X \xi
		\cdot 
		\nabla_X V  \dd \rho
		\end{align*}
	\end{proof}

	\begin{remark}
		Lemma \ref{lemma:stability-weak-solutions} allows to extract the continuum limit of finite, multiphasic flows such as those studied in~\cite{CGM17} by crafting a pertinent sequence of discrete second marginals $\nu_n$ converging to a weak* limit $\nu$. We illustrate this convergence with numerical examples in Section \ref{sec:numerics}.
	\end{remark}

	\section{Asymptotic limit of weak solutions}
	\label{sec:asymptotic}

 In this section we show that any weak solution $(\rho_t, \Pi_t)$ converges as $t\to\infty$ to the unique minimizer of the energy $\nrg$.
 This contrasts with the behavior of the solutions of the AHT scheme (corresponding to $\kappa = 0$ and $\rho_t$ being of Monge type, cf.~Section \ref{sec:motivation} and also Section \ref{sec:unregularized} further below), which are not guaranteed to converge to the optimizer and can stay stationary in suboptimal configurations.
	
	\begin{lemma}
		\label{lemma:dissipation}
		For a sequence of weak solutions $(\rho_n, \Pi_n)$ with initial conditions $(\rho^0_n)_n$ and fixed marginals $\mu$ and $\nu$ converging to $(\rho, \Pi)$ on a time interval $(t_1, t_2) \subset [0, \infty)$ under the same conditions of Lemma \ref{lemma:stability-weak-solutions}, the integral of the dissipation is lower-semicontinuous, i.e.:
		\begin{equation}
		\int_{t_1}^{t_2} \mathcal{I}(\rho_t, \Pi_t) \dd t 
		\le 
		\liminf_{k\to\infty}
		\int_{t_1}^{t_2} \mathcal{I}((\rho_{n})_t, (\Pi_n)_t) \dd t.		
		\end{equation}
	\end{lemma}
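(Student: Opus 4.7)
The plan is to follow the proof of Lemma \ref{lemma:dissipation0} essentially verbatim, replacing the piecewise constant discrete interpolants $(\bar\rho_\tau, \bar\Pi_\tau)$ with the sequence of weak solutions $(\rho_n, \Pi_n)$, and invoking Lemma \ref{lemma:stability-weak-solutions} in place of Lemma \ref{lemma:convergence-rho-pi} at every step where the convergence of the ingredients is needed.

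First I would expand the dissipation into three pieces exactly as in \eqref{eq:dissipation-1}--\eqref{eq:dissipation-3}:
\begin{equation*}
\int_{t_1}^{t_2}\mathcal{I}(\rho_n, \Pi_n)\,dt
= A_n - 2B_n + C_n,
\end{equation*}
with $A_n := \int_{t_1}^{t_2}\intXY |\nabla_X(V+\kappa\log r_n)|^2 \dd \rho_n\,dt$, $B_n := \int_{t_1}^{t_2}\intXY \nabla_X(V+\kappa\log r_n)\cdot\nabla\Pi_n \dd \rho_n\,dt$, and $C_n := \int_{t_1}^{t_2}\intX |\nabla\Pi_n|^2 \dd\mu\,dt$. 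For $A_n$, I substitute $\sigma_n := r_n e^{V/\kappa}$ to rewrite it as $4\kappa^2 \int_{t_1}^{t_2}\intXY e^{-V/\kappa}|\nabla_X\sqrt{\sigma_n}|^2 \dd x \dd\nu(y)\,dt$. The uniform horizontal Fisher bound \eqref{eq:bound-rho-limit}, which holds for every $\rho_n$ by Lemma \ref{lemma:regularity-weak-solutions} with a constant independent of $n$, transfers to $\sigma_n$ thanks to $V\in C^2(X\times Y)$ and yields weak $L^2$-compactness of $\nabla_X\sqrt{\sigma_n}$; lower semicontinuity of the squared $L^2$-norm with the positive weight $e^{-V/\kappa}$ then handles $A_n$.

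For $C_n$, lower semicontinuity is immediate from the weak convergence $\Pi_n \weakto \Pi$ in $L^2([t_1,t_2]; H^1(X,\mu))$ from \eqref{eq:convergence-pi-limit}. For the cross term $B_n$, I split it as
\begin{equation*}
B_n = \int_{t_1}^{t_2}\intXY \nabla_X V\cdot\nabla\Pi_n \dd\rho_n\,dt + \kappa\int_{t_1}^{t_2}\intXY \nabla_X \log r_n\cdot\nabla\Pi_n \dd\rho_n\,dt.
\end{equation*}
The second integral simplifies by $\nabla_X\log r_n \dd\rho_n = \nabla_X r_n \dd x\dd\nu(y)$ and the marginal identity $\int_Y r_n\dd\nu = \mu$ to $\kappa \int_{t_1}^{t_2}\intX \nabla\log\mu\cdot\nabla\Pi_n\dd\mu\,dt$, which converges (in fact, continuously) because $\nabla\log\mu \in L^2(X,\mu)$ by the finite Fisher information assumption and $\nabla\Pi_n\weakto\nabla\Pi$ weakly in $L^2(X,\mu)$. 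The first integral converges by the joint product convergence \eqref{eq:convergence-rho-pi-product-limit} of Lemma \ref{lemma:stability-weak-solutions} applied componentwise with $\zeta(t,x,y) = \chi(t)\,\partial_{x_i} V(x,y)$ for a smooth cutoff $\chi$ supported in a neighborhood of $[t_1,t_2]$.

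The main obstacle is the identification of the weak $L^2$-limit of $\nabla_X\sqrt{\sigma_n}$ as $\nabla_X\sqrt{\sigma}$ in the analysis of $A_n$: weak-$*$ convergence of the measures $\rho_n$ alone does not identify the weak limit of $\sqrt{\sigma_n}$. To resolve this, I would invoke the strong convergence of vertical averages \eqref{eq:convergence-rho-average-limit} together with the uniform $H^1_x$-bound and the uniform $L^\infty$-bound inherited from the $L^\infty$-bound on $\mu$ and on $V$; an Aubin--Lions argument analogous to the one carried out in the proof of Lemma \ref{lemma:convergence-rho-pi} then upgrades the convergence to strong $L^2$ convergence of $r_n$ (hence of $\sigma_n$ and of $\sqrt{\sigma_n}$) on $[t_1,t_2]\times X\times Y$, which identifies the weak limit of $\nabla_X\sqrt{\sigma_n}$ as $\nabla_X\sqrt{\sigma}$ and closes the proof.
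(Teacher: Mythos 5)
Your decomposition into the three contributions \eqref{eq:dissipation-1}--\eqref{eq:dissipation-3}, and your treatment of the cross term (splitting off the $\nabla_X V$ part via \eqref{eq:convergence-rho-pi-product-limit} and reducing the $\nabla_X\log\rhod_n$ part to $\kappa\int\nabla\log\mu\cdot\nabla\Pi_n\dd\mu$) and of the $|\nabla\Pi_n|^2$ term, coincide with the paper's strategy, which simply declares the proof analogous to that of Lemma \ref{lemma:dissipation0} with the estimates of Lemma \ref{lemma:regularity-weak-solutions} in place of the discrete ones. The gap is in your last paragraph, i.e.\ in the Fisher-information term $A_n$. The Aubin--Lions argument of Lemma \ref{lemma:convergence-rho-pi} cannot be upgraded to strong $L^2([t_1,t_2]\times X\times Y)$ convergence of $\rhod_n$: there is no regularity control whatsoever in the $y$-variable (indeed $\nu$ may be purely atomic), and the paper's compactness machinery deliberately yields strong convergence only of the \emph{vertical averages} $\omega_n$. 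Sequences satisfying all the uniform bounds can still oscillate across fibers --- this is exactly the non-compactness of $(\prob(X\times Y),\wassF)$ recalled before Lemma \ref{lemma:order-wass}, and nothing in Lemmas \ref{lemma:regularity-weak-solutions} or \ref{lemma:stability-weak-solutions} excludes it. For such oscillating sequences $\sqrt{\sigma_n}$ does not converge strongly, and (since the square root is nonlinear) its weak $L^2_tH^1_x$ limit need not be $\sqrt{\sigma}$; by Jensen it is generically \emph{smaller}, so even granting weak compactness of $\nabla_X\sqrt{\sigma_n}$, lower semicontinuity of the norm along that weak limit would give a lower bound that can fall strictly below the required $4\kappa^2\int e^{-V/\kappa}|\nabla_X\sqrt{\sigma}|^2$. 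So the step is not merely unjustified; the route through identification of the weak limit of the gradients is the wrong one.

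The argument that closes this term without any limit identification is the convexity/duality one: the weighted Fisher information is the integral of the jointly convex function $(u,p)\mapsto |p|^2/u$, so that $\int e^{-V/\kappa}\,|\nabla_X\sigma_n|^2/\sigma_n\,\dd x\dd\nu$ equals the supremum over smooth vector fields $b(t,x,y)$ of $\int e^{-V/\kappa}\big(2b\cdot\nabla_X\sigma_n-|b|^2\sigma_n\big)\dd x\dd\nu$. After integrating by parts in $x$ fiberwise (using $b$ compactly supported in the interior of $X$, say), each such functional is the integral of a fixed continuous function against $e^{V/\kappa}\dd\rho_n$ and hence converges under the locally uniform weak-$*$ convergence \eqref{eq:convergence-rho-limit}; a supremum of continuous functionals is lower semicontinuous. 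Replacing your final paragraph by this observation makes the proof complete; the rest of your proposal is sound and is essentially the paper's own (tersely stated) argument.
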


	\begin{proof}
		The proof is analogue to that of Lemma \ref{lemma:dissipation0}, replacing the regularity estimates coming from the minimizing movement scheme by the weak solution estimates in Lemma \ref{lemma:regularity-weak-solutions}.
	\end{proof}

	\begin{lemma}
		\label{lemma:dissipation-0-means-optimal}
		Let $\rho_\infty \in \Gamma(\mu, \nu)$, with a non-negative density $\rhod_\infty \ge 0$ in $L^1(X\times Y,\Lebesgue \otimes \nu)$, and let  $\Pi_\infty$ be the corresponding pressure given by \eqref{eq:weak-potential}. If $\mathcal{I}(\rho_\infty, \Pi_\infty) = 0$, then $\rho_\infty$ is the unique minimizer of the energy $\nrg$.
	\end{lemma}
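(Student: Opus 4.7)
The strategy is to show that vanishing dissipation forces $\rhod_\infty$ to have the exponential Gibbs form characterized in Lemma~\ref{lemma:entropic-minimizers}(iii), and then invoke that lemma to identify $\rho_\infty$ with the unique minimizer.

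\textbf{Step 1: rewrite the dissipation in flux form.} I would first rewrite
\begin{equation}
\mathcal{I}(\rho_\infty,\Pi_\infty) = \intXY \frac{|\,\kappa\nabla_X \rhod_\infty + \rhod_\infty\nabla_X V - \rhod_\infty\nabla_X \Pi_\infty\,|^2}{\rhod_\infty}\dd x \dd\nu(y),
\end{equation}
using $\rhod_\infty\nabla_X\log \rhod_\infty = \nabla_X \rhod_\infty$ on $\{\rhod_\infty>0\}$ and adopting the convention that the integrand vanishes on $\{\rhod_\infty=0\}$. Since $\mathcal{I}(\rho_\infty,\Pi_\infty) = 0$, I can conclude that $\kappa\nabla_X\rhod_\infty = \rhod_\infty(\nabla_X \Pi_\infty - \nabla_X V)$ a.e.\ on $X\times Y$; on $\{\rhod_\infty=0\}$ both sides vanish a.e.\ by Stampacchia's truncation lemma applied fibrewise (the setup provides enough Sobolev regularity on $\rhod_\infty$ since $\mathcal{I}$ controls $\nabla_X\sqrt{\rhod_\infty}$ in $L^2$).

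\textbf{Step 2: integrate the ODE.} I then introduce the auxiliary function $g(x,y) := \rhod_\infty(x,y)\exp\!\bigl((V(x,y)-\Pi_\infty(x))/\kappa\bigr)$. A direct application of the chain rule together with the identity from Step~1 yields $\nabla_X g \equiv 0$ a.e.\ on $X\times Y$. Because $X$ has connected interior (cf.\ Section~\ref{sec:hypotheses}), this forces $g(\cdot,y)$ to be constant for $\nu$-a.e.\ $y\in Y$; write $g(x,y)=h(y)$ for some measurable non-negative $h:Y\to\Rnn$. Hence
\begin{equation}
\rhod_\infty(x,y) = h(y)\,\exp\!\bigl((\Pi_\infty(x)-V(x,y))/\kappa\bigr).
\end{equation}

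\textbf{Step 3: conclude via Lemma~\ref{lemma:entropic-minimizers}.} The marginal identity $\int_X \rhod_\infty(x,y)\dd x = 1$ from Lemma~\ref{lemma:disintegrations-have-density}(ii) forces $h(y)>0$ for $\nu$-a.e.\ $y$, so I may define $\Psi(y) := \kappa\log h(y)$ and rewrite $\rhod_\infty = \exp\bigl((\Pi_\infty\oplus\Psi - V)/\kappa\bigr)$. Because $\rho_\infty\in\Gamma(\mu,\nu)$ by assumption, Lemma~\ref{lemma:entropic-minimizers}(iii) identifies $\rho_\infty$ as the unique minimizer of $\nrg$.

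\textbf{Main obstacle.} The delicate point is Step~1: the dissipation integrand $|\nabla_X(V-\Pi_\infty + \kappa\log\rhod_\infty)|^2\rhod_\infty$ is \emph{a priori} only meaningful where $\rhod_\infty>0$, and one must pass to the flux formulation to get an a.e.\ pointwise equation on all of $X\times Y$ that can be manipulated by the chain rule in Step~2. Justifying that $\nabla_X \rhod_\infty$ exists in a weak sense and vanishes a.e.\ on $\{\rhod_\infty=0\}$ requires that the finiteness of $\mathcal{I}$ imposes enough Sobolev regularity on $\sqrt{\rhod_\infty}$ along the $X$-direction; this is the analytic content behind an otherwise algebraic identification with the Gibbs form.
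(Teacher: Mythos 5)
Your proposal follows essentially the same route as the paper: your auxiliary function $g=\rhod_\infty e^{(V-\Pi_\infty)/\kappa}$ is exactly the paper's $\sigma$, the conclusion $\nabla_X g\equiv 0$ plus connectedness of the interior of $X$ gives $g(x,y)=h(y)$, and the identification of $\rho_\infty$ with the minimizer goes through Lemma~\ref{lemma:entropic-minimizers}(iii) in both cases. The one step you skip, and which the paper proves first, is that $\Pi_\infty$ is \emph{bounded}: integrating the pointwise identity $\nabla\Pi_\infty=\nabla_X V+\kappa\nabla_X\log\rhod_\infty$ against $\rhod_\infty\,\dd\nu(y)$ yields $\nabla\Pi_\infty=\int_Y\nabla_X V\,\tfrac{\rhod_\infty}{\mu}\dd\nu+\kappa\nabla\log\mu$, whence $\Pi_\infty-\kappa\log\mu$ is Lipschitz and $\Pi_\infty$ is bounded by the standing assumptions on $\mu$. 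This is not cosmetic for your argument either: without it, $e^{(V-\Pi_\infty)/\kappa}$ need not be locally bounded above or below, so the product/chain rule defining $\nabla_X g$ and the inference that $\nabla_X g=0$ a.e.\ forces $g(\cdot,y)$ to be constant (which requires $g(\cdot,y)\in W^{1,1}_{\mathrm{loc}}$ of the connected interior) are not justified as written. Once that boundedness is supplied, your three steps coincide with the paper's proof; your extra care on the set $\{\rhod_\infty=0\}$ is legitimate but ultimately moot, since the Gibbs form you derive is strictly positive.
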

	\begin{proof}
		By assumption:
		\begin{equation}
		\label{eq:dissipation-limit}
		0
		=
		\int_{X\times Y} 
		\left|\nabla_X\big(\kappa\log\rhod_\infty(x,y)+V(x,y)-\Pi_\infty(x)\big)\right|^2
		\dd \rho_\infty(x,y).
		\end{equation}
		
		First we will show that $\Pi_\infty$ is bounded. Since $\rhod_\infty$ is non-negative, the gradient in \eqref{eq:dissipation-limit} must vanish wherever $\rhod_\infty$ is strictly positive, i.e.
		\begin{equation}
		\nabla \Pi_\infty(x) 
		=
		\nabla_X V(x,y) + \kappa \nabla_X \log \rhod_\infty(x,y)
		\quad 
		\text{for $\rho_\infty$-a.e.~}(x,y).
		\end{equation}
		Multiplying by $\rho_\infty$ and integrating in $Y$ one finds:
		\begin{equation}
		\nabla \Pi_\infty(x) \mu(x)
		=
		\int_Y \nabla_X V(x,y) \rhod_\infty(x,y) \dd \nu(y) 
		+ 
		\kappa \nabla \mu(x)
		\quad 
		\text{for a.e.~}x,
		\end{equation}
		or
		\begin{equation}
		\nabla \Pi_\infty(x) 
		=
		\int_Y \nabla_X V(x,y) \frac{\rhod_\infty(x,y)}{\mu(x)} \dd\nu(y)
		+ 
		\kappa \nabla \log \mu(x)
		\quad 
		\text{for a.e.~}x.
		\end{equation}
		Since $\int_Y \rhod_\infty(x,y) \dd \nu(y) = \mu(x)$ and $\nabla_X V$ is bounded, we deduce $\Pi_\infty - \kappa \log \mu$ is bounded on $X$, and in view of the uniform bounds on $\mu$ so is $\Pi_\infty$. Now, defining $\sigma := e^{(V-\Pi_\infty)/\kappa} \rhod_\infty$ we can rewrite \eqref{eq:dissipation-limit} as:
		\begin{equation}
		0
		=
		\int_{X\times Y} 
		\left|\kappa\nabla_X\log\sigma \right|^2\sigma e^{-(V-\Pi_\infty)/\kappa}
		\dd x\dd \nu
		=
		4\kappa^2\int_{X\times Y} 
		\left|\nabla_X\sqrt{\sigma} \right|^2  e^{-(V-\Pi_\infty)/\kappa}
		\dd x\dd \nu.
		\end{equation}
		And since both $V$ and $\Pi_\infty$ are bounded, $\kappa$ is positive and $X$ has connected interior, $\sigma(x,y)$ must be for each $y$ equal to a positive constant $e^{\Psi(y)/\kappa}$. 
		Rearranging terms yields:
		
		\begin{equation}
		\label{eq:diagonal-scaling}
		\rhod_\infty(x,y)
		=
		\exp \left(
		\frac{\Pi_\infty(x) + \Psi(y) - V(x,y)}{\kappa}
		\right).
		\end{equation}
		
		Finally, since the measure $\rho_\infty = \rhod_\infty \dd x \dd \nu (y)$ has marginals $\mu$ and $\nu$ and its density takes the form of the diagonal scaling \eqref{eq:diagonal-scaling}, we conclude by Lemma \ref{lemma:entropic-minimizers} that it is the unique minimizer of $\nrg$, or analogously, the optimal entropic plan between $\mu$ and $\nu $ with cost function $V$ and regularization strength $\kappa$. The optimal entropic dual potentials are thus $\Pi_\infty$ and $\Psi$. 
	\end{proof}
	
	\begin{lemma}
		The weak solutions $(\rho_t, \Pi_t)_t$ obtained in Lemma \ref{lemma:convergence-rho-pi} tend as $t\to \infty$ to the unique minimizer of $\nrg$.
	\end{lemma}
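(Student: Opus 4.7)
The strategy is to exploit the energy--dissipation estimate from Lemma~\ref{lemma:dissipation0} to show that any subsequential $t\to\infty$ limit of $\rho_t$ is a stationary configuration with vanishing dissipation, and then invoke Lemma~\ref{lemma:dissipation-0-means-optimal} to identify it with the unique minimizer of $\nrg$. Concretely, Lemma~\ref{lemma:dissipation0} says that $t\mapsto\nrg(\rho_t)$ is non-increasing and $\int_0^\infty \mathcal{I}(\rho_t,\Pi_t)\dd t \le \nrg(\rho^0) < \infty$. Fix a sequence $T_n\to\infty$ and define the time-translated pairs $(\rho^{(n)}_t,\Pi^{(n)}_t):=(\rho_{t+T_n},\Pi_{t+T_n})$; each is a weak solution of \eqref{eq:weaktxy}--\eqref{eq:weak-potential} with initial datum $\rho^{(n)}_0 = \rho_{T_n}\in\Gamma(\mu,\nu)$ of energy at most $\nrg(\rho^0)$. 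Inspection of Lemma~\ref{lemma:regularity-weak-solutions} shows that its constants depend only on the initial energy, $\|\nabla\sqrt{\mu}\|_2$, $\|\nabla_X V\|_\infty$ and $\kappa$, so the regularity estimates hold for the family $(\rho^{(n)},\Pi^{(n)})$ uniformly in $n$.

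By weak* compactness of $\prob(X\times Y)$ there is a subsequence along which $\rho_{T_n}\weakto\rho_\infty$ for some $\rho_\infty\in\Gamma(\mu,\nu)$. Applying Lemma~\ref{lemma:stability-weak-solutions} with $\mu_n\equiv\mu$ and $\nu_n\equiv\nu$, the translated solutions converge (up to a further subsequence) to a weak solution $(\tilde\rho,\tilde\Pi)$ of \eqref{eq:weaktxy}--\eqref{eq:weak-potential} starting from $\rho_\infty$, locally uniformly in $\wass$. For any bounded interval $(t_1,t_2)\subset[0,\infty)$, Lemma~\ref{lemma:dissipation} together with the time translation yields
\[
\int_{t_1}^{t_2}\mathcal{I}(\tilde\rho_t,\tilde\Pi_t)\dd t
\le \liminf_{n\to\infty}\int_{t_1}^{t_2}\mathcal{I}(\rho^{(n)}_t,\Pi^{(n)}_t)\dd t
= \liminf_{n\to\infty}\int_{T_n+t_1}^{T_n+t_2}\mathcal{I}(\rho_t,\Pi_t)\dd t = 0,
\]
where the last equality uses the finiteness of the total dissipation. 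Hence $\mathcal{I}(\tilde\rho_t,\tilde\Pi_t)=0$ for a.e.~$t\ge0$, and by Lemma~\ref{lemma:dissipation-0-means-optimal} $\tilde\rho_t = \rho_*$ for a.e.~$t$, where $\rho_*$ denotes the unique minimizer of $\nrg$. Continuity of $\tilde\rho$ in $\wass$ (Lemma~\ref{lemma:regularity-weak-solutions}) then forces $\rho_\infty = \tilde\rho_0 = \rho_*$.

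Since every sequence $T_n\to\infty$ admits a subsequence along which $\rho_{T_n}\weakto\rho_*$, uniqueness of the cluster point upgrades subsequential to full convergence: $\rho_t\weakto\rho_*$ as $t\to\infty$ by a standard subsequence-of-subsequence argument. The main technical point to verify is that the regularity estimates of Lemma~\ref{lemma:regularity-weak-solutions} remain valid for the time-translated trajectories, which are not a priori obtained as limits of the minimizing movement scheme started from $\rho_{T_n}$; however, their proofs rely only on the energy-dissipation inequality and on the weak formulations \eqref{eq:weaktxy}--\eqref{eq:weak-potential}, both invariant under time translation, so the transfer is immediate.
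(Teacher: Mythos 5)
Your proposal is correct and follows essentially the same strategy as the paper's proof: extract a weak* cluster point of $\rho_{T_n}$, pass the time-translated trajectories to a limiting weak solution via the stability lemma, use finiteness of the total dissipation together with its lower-semicontinuity (Lemma~\ref{lemma:dissipation}) to conclude the limit has zero dissipation, identify it with the unique minimizer via Lemma~\ref{lemma:dissipation-0-means-optimal}, and upgrade to full convergence by uniqueness of the cluster point. Your explicit remark that the regularity estimates of Lemma~\ref{lemma:regularity-weak-solutions} transfer to the time-translated trajectories (a hypothesis of the stability lemma) is a point the paper passes over silently, and is a welcome addition rather than a deviation.
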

	\begin{proof}
		Our convergence argument follows a similar rationale to that in \cite{asymptotic-convergence-reference}.
		Let $(\rho, \Pi)$ be a weak solution of \eqref{eq:weaktxy}-\eqref{eq:weak-potential}.
		There exist cluster points of the trajectory $(\rho_t)_t$ by sheer weak* compactness in the space of probability measures on $X\times Y$; let us denote such a cluster point by $\rho_*$ and the converging subsequence of times by $(t_n)_n$. By weak* continuity, $\rho_*$ preserves the marginals and positivity of the trajectory $\rho$.

		Let us define functions $\rho_n(s) := \rho(t_n + s)$, $\Pi_n := \Pi(t_n + s)$ for $s\in [0,1]$. Since each $(\rho_n, \Pi_n)$ is a weak solution of \eqref{eq:weaktxy}-\eqref{eq:weak-potential} with initial condition $\rho_n(0) = \rho(t_n)  \to \rho_*$, the stability of weak solutions (Lemma \ref{lemma:stability-weak-solutions}) grants convergence of (a subsequence of) $(\rho_n, \Pi_n)_n$ to a weak solution $(\rho', \Pi')$ with initial condition $\rho_*$. Now, since the dissipation $\mathcal{I}$ of the original weak solution $(\rho, \Pi)$ is positive and summable, the integrals of the form $\int_0^1 \mathcal{I}(\rho_n(s), \Pi_n(s)) \dd s$ must converge to zero, which combined with Lemma \ref{lemma:dissipation} implies that the dissipation vanishes in the asymptotic limit: 
		\begin{equation}
		0 \le \int_{0}^1 \mathcal{I}(\rho'(s), \Pi'(s)) \dd s
		\le 
		\liminf
		\int_{0}^1 \mathcal{I}(\rho_n(s), \Pi_n(s)) \dd s
		= 
		0
		\end{equation}
		Since $\mathcal{I}$ is a non-negative quantity (for $\rho \ge 0$), this means that $\mathcal{I}(\rho'(s), \Pi'(s))$ is identically zero, which by Lemma \ref{lemma:dissipation-0-means-optimal} implies that $\rho'(s)$ and $\rho_*$ are equal to the unique minimizer of $\nrg$, given by \eqref{eq:diagonal-scaling}. Since this convergence holds along any subsequence, the whole trajectory converges to $\rho_*$.
	\end{proof}
	
	\section{Stationarity in the unregularized case}
	\label{sec:unregularized}

    Section \ref{sec:asymptotic} showed that solutions to \eqref{eq:PDE1}-\eqref{eq:PDE2} converge as $t\to\infty$ to the minimizer of $\nrg$.
    In contrast, the AHT scheme does not enjoy such a global convergence. 
    As explained in Section \ref{sec:motivation}, equations \eqref{eq:PDE1}-\eqref{eq:PDE2} introduce two kinds of modifications with respect to the AHT scheme: a relaxation (enlarging the space of transport maps to that of transport plans) and a smoothing (adding an entropic regularization term to the energy).
    In this section we show that the relaxation step is not sufficient to endow the AHT scheme with convergence properties analogous to those of \eqref{eq:PDE1}-\eqref{eq:PDE2}, and that some kind of regularization appears to be necessary.
 
	As reviewed in Section \ref{sec:motivation}, the problem of existence of solutions becomes in general much harder for $\kappa = 0$, since the a priori estimate on the horizontal gradients of $\rho$ (or $\rhod$) is lost. 
	In fact, for an absolutely continuous $\nu$, weak solutions to \eqref{eq:PDE1}-\eqref{eq:PDE2} may be extremely degenerate, possibly giving mass to a single point in each $y$-fiber. Due to these difficulties we will not attempt to provide a comprehensive picture of the $\kappa = 0$ case. Instead, we will exemplify that, when weak solutions exist, they face the same type of obstacle for asymptotic convergence to the energy minimizer that is observed in the AHT scheme. 
	The reason is that in the $\kappa = 0$ regime there exist suboptimal stationary points of the evolution equation. We will demonstrate this by showing that, whenever $V\in C^2(X\times Y)$ satisfies the \textit{twist condition} (i.e., $\nabla_X V(x, \cdot)$ is injective for all $x$), any initial datum $\rho_0$ of the form 
	\begin{equation}
	\label{eq:form-stationary-points}
	\rho_0 = (\id, T_*)_\sharp \mu, 
	\qquad 
	\tn{with $T_*(x) = \nabla_X V(x, \cdot)^{-1}(\nabla\Pi_*(x))$, 
		\qquad
		$T_{*\sharp}\mu = \nu$}
	\end{equation}
	with $\Pi_*$ in $C^2(X)$ stays stationary by the discrete minimizing movement when $\tau$ is small enough. 
	As a consequence, in this case a limit trajectory exists and is constantly equal to the initialization, which in general will be suboptimal for the energy.
	
	Let us show this in detail. 
	For $\kappa = 0$ the minimizing movement \eqref{eq:JKO-scheme} becomes:
	\begin{equation}
	\label{eq:JKO-kappa-0}
	\rho_\tau^n
	\in \argmin_{\rho\in \Gamma(\mu, \nu)}
	\frac1{2\tau}\wassF(\rho, \rho_\tau^{n-1})^2 + \nrg_0(\rho),
	\qquad
	\nrg_0(\rho) := \int_{X\times Y} V \dd \rho.
	\end{equation}

	Our argument hinges on a primal-dual argument for the minimizing movement. In analogy to Lemma \ref{lem:steppositive}, \eqref{eq:JKO-kappa-0} can be rewritten as 
	\begin{equation}
	\label{eq:primal-kappa-0}
	\rho^n = \proj_{13}\gamma^n, 
	\quad
	\text{with $\gamma^n$ }
	\in 
	\argmin_{\gamma \in \Gamma(\mu, \rho^{n-1})}
	\int_{X\times X \times Y}
	\left(
	\frac{|x-x'|^2}{2\tau} + V(x,y) 
	\right)
	\dd \gamma(x,x',y),     
	\end{equation}
	with respective (pre)-dual problem
	\begin{align}
	\label{eq:dual-kappa-0}
	\begin{gathered}
	\sup_
	{\scriptsize\begin{array}{c}
		\Pi\in C(X)\\ \Psi\in C(X\times Y)
		\end{array}
	}
	\int_X \Pi(x) \dd \mu(x)
	+
	\int_{X\times Y}
	\Psi(x',y) \dd \rho^{n-1}(x',y)
	\\
	\text{s.t. }
	\Pi(x) + \Psi(x',y)
	\le 
	\frac{|x-x'|^2}{2\tau} + V(x,y)
	\text{ for all } (x, x', y) \in X\times X \times Y.
	\end{gathered}
	\end{align}
	
	We can show that $\rho_0$ is stationary by the minimizing movement ---or, equivalently, optimal for the primal problem \eqref{eq:primal-kappa-0} when choosing $\rho^{n-1}=\rho_0$ --- by building a dual certificate. Choosing $\Pi = \Pi_*$, we can obtain a feasible $\Psi$ by computing the $c$-transform of $\Pi_*$ with respect to the cost function $c(x,x',y) := \tfrac{|x-x'|^2}{2\tau} + V(x,y)$:
	\begin{align*}
	\Psi_*(x', y) 
	:=
	\inf_{x\in X}
	\frac{|x-x'|^2}{2\tau} + V(x,y)
	-
	\Pi_*(x).
	\end{align*}
	For $\tau$ sufficiently small, and for each $(x', y) \in X \times Y$, the expression in the infimum is a uniformly convex function of $x$ (thanks to the assumed smoothness of $\Pi_*$).
    So a sufficient condition for $\hat{x}$ to attain the infimum is that it satisfies:
    \begin{equation}
	(\hat{x}-x')/\tau + \nabla_X V(\hat{x},y) - \nabla_* \Pi(\hat{x}) = 0.
	\label{eq:optimality-condition-dual}
	\end{equation}
	
	Now, for $(x',y)$ in the support of $\rho_0=(\id, T_*)_\sharp \mu$ , \eqref{eq:form-stationary-points}, one has $y = \nabla_X V(x', \cdot)^{-1}(\nabla \Pi_*(x'))$, so choosing $\hat{x} = x'$ verifies \eqref{eq:optimality-condition-dual}. Thus, for $\rho^{n-1}$-a.e.~$(x',y)$, $\Psi_*$ is given by: 
	\begin{equation}
	\Psi_*(x', y) = V(x',y) - \Pi_*(x').
	\end{equation}
	
	The dual score then reads:
	\begin{equation}
	\int_X \Pi_* \dd \mu
	+
	\int_{X\times Y}
	\Psi_* \dd \rho^{n-1}
	=
	\int_X \Pi_* \dd \mu
	+
	\int_{X\times Y}
	(V - \Pi_*) \dd \rho^{n-1}
	=
	\int_{X\times Y}
	V \dd \rho^{n-1},
	\end{equation}
	which matches the primal score \eqref{eq:primal-kappa-0} for $\gamma_* := (\id, \id, T)_\sharp \mu$. Hence we obtained a primal-dual certificate, showing that for $\tau$ sufficiently small $\rho_0$ is a minimizer of \eqref{eq:primal-kappa-0} and therefore a stationary point of the minimizing movement scheme \eqref{eq:JKO-kappa-0}.
	
	\section{Numerical simulations}
	\label{sec:numerics}
	
	In this Section we showcase with numerical simulations the main theoretical results discussed above. 
	Our numerical experiments are based on solving the minimizing movement scheme \eqref{eq:JKO-scheme}, or more precisely its formulation as a convex optimization problem \eqref{eq:primal}, using the Chambolle-Pock primal-dual method \cite{Chambolle2011}. 
    Our approach differs from traditional numerical approaches to simulate multiphase porous media flows (see for instance~\cite{CE97, AS79, EHM03}) as it builds on the variational interpretation of the problem rather than on a direct discretization of the partial differential equations~\eqref{eq:PDE1}--\eqref{eq:PDE2}. 
    Let us mention the contribution~\cite{CGLM19} where an augmented Lagrangian approach extending the ideas of~\cite{BCL16} to the context of multiphase porous media flows  was compared to the more classical (and non-variational) phase-by-phase upstream mobility finite volume scheme~\cite{EHM03}.

	To keep the figures intuitive and simple we fix $X = Y = [0,1]$, $V(x,y) = |x-y|^2/2$, and $\mu = \dd x$ the Lebesgue measure on $X$. We set $\kappa = 0.01$.
	Unless otherwise stated we discretize $\mu$ to a set of $M = 256$ equispaced points with equal weights in $[0,1]$. We set the second marginal $\nu$ likewise to a sum of $N\in \{4, 16, 64, 256\}$ equispaced dirac deltas with equal weights. For the initialization $\rho^0$ we choose either the \textit{product} initialization $\rho^0 = \mu \otimes \nu$ or the \textit{flipped} initialization $\rho^0 = (\id, 1 - \id)_\sharp \mu$, each discretized appropriately. 
	
	We fix the timestep to $\tau = 0.25$. For smaller values of $\tau$ we encounter discretization artefacts that hinder the convergence of the JKO step:
    When $X$ is discretized, the minimal cost for moving a particle horizontally becomes quantized, since it must move by at least one pixel, and it may therefore be more expensive than the potential gain by following a small $X$-gradient of $V$, especially when $\tau$ is very small.
	To alleviate this issue one could use entropic regularization in the transport term $\wassF$  (see e.g.~\cite{entropic-schemes}); however in this work we prefer to illustrate faithfully the behavior of the minimizing movement without introducing additional terms.
	
	Figure \ref{fig:short-trayectory-product} shows an example of the first iterations in the minimizing scheme for a product initialization and $N = 4$.
	Note how the accumulation of mass of species 1 at the left boundary and of species 4 at the right boundary makes up for the gap left by the other species. 
	
		\begin{figure}[htbp]
		\centering
		\includegraphics[width=\textwidth]{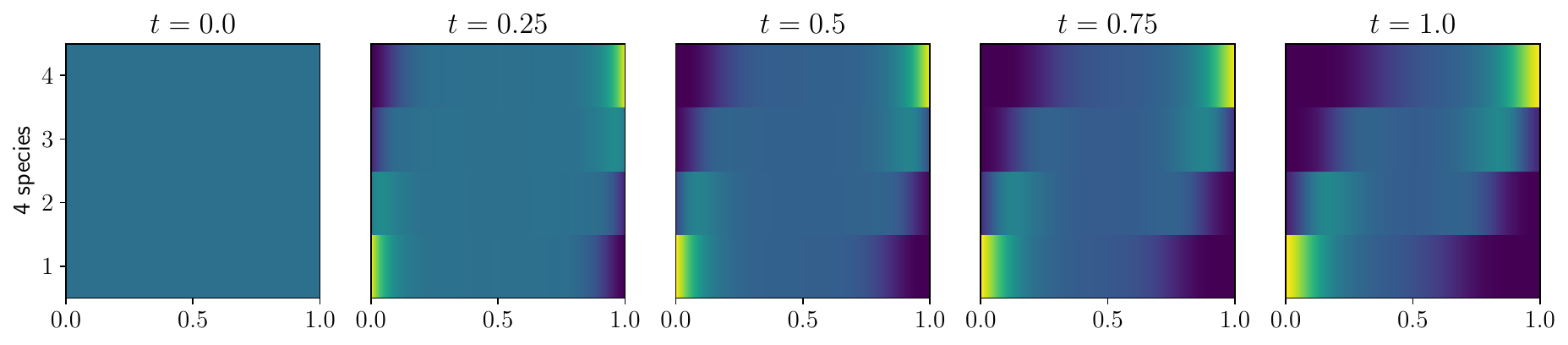}
		\caption{Discrete trajectory $\bar{\rho}_\tau$ for $N = 4$ and product initialization. Each band represents a phase, with hue indicating density. }
		\label{fig:short-trayectory-product}
	\end{figure}
	
	\begin{figure}
		\centering
		\includegraphics[width=\linewidth]{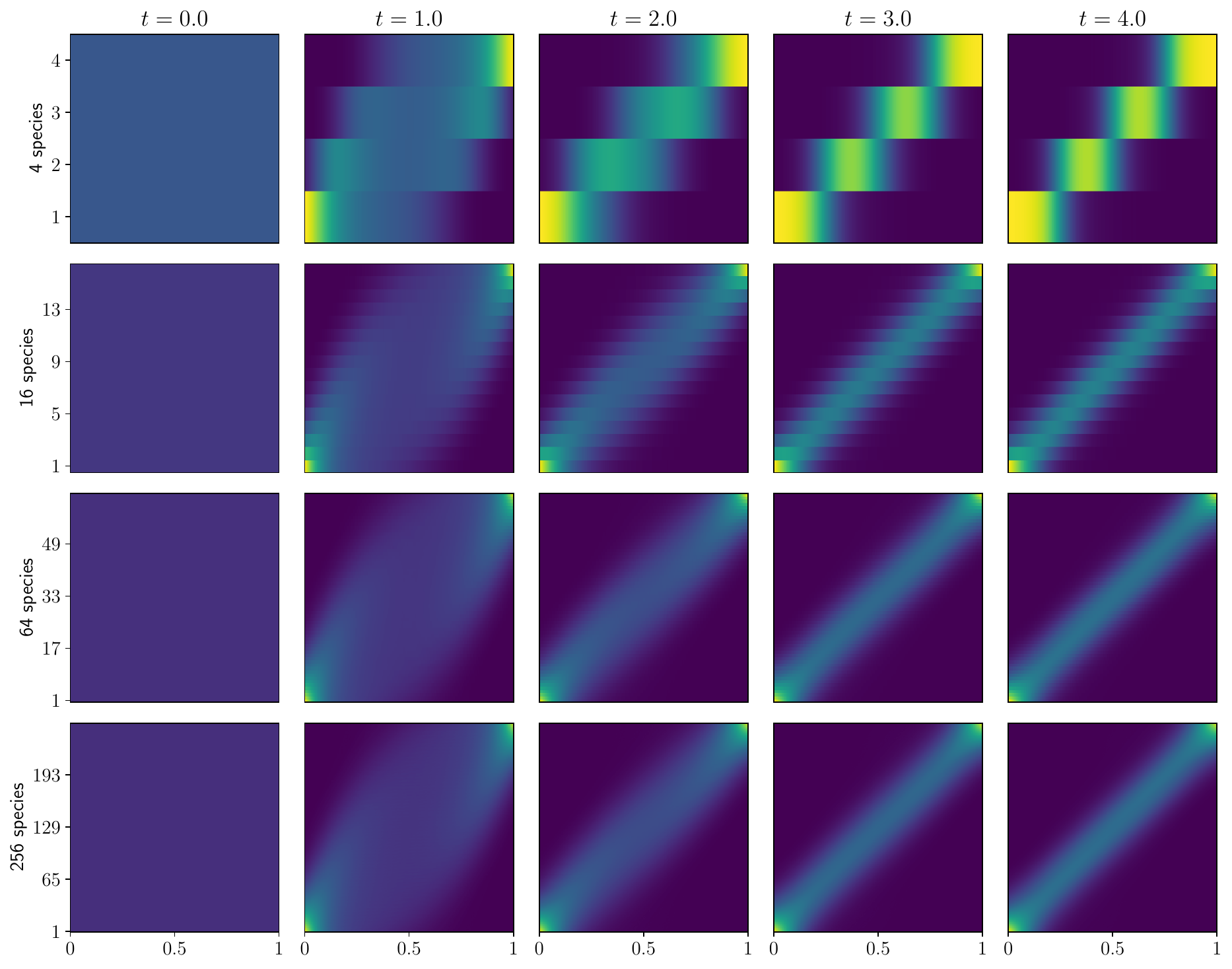}
		\caption{Comparison of discrete trajectories for different number of species $N$ and product initialization. Note how the trajectories tend to a consistent limit as $N$ increases.}
		\label{fig:compare-trayectory-product}
	\end{figure}

	Figure \ref{fig:compare-trayectory-product} shows a comparison of the same evolution for several choices of the number of species $N$. It is interesting to note how the evolution is consistent for the several values of $N$ (i.e.~convergence is achieved in approximately the same time), and the intermediate steps in the evolution are strikingly similar, in agreement with the stability of weak solutions shown in Lemma \ref{lemma:stability-weak-solutions}.

	An analogous observation can be made for Figures \ref{fig:short-trayectory-flipped} and \ref{fig:compare-trayectory-flipped}. Here the initialization is a flipped plan, which constitutes a worse initial score for $\nrg$ than the product coupling from Figures \ref{fig:short-trayectory-product} and \ref{fig:compare-trayectory-product}.
	This explains the longer time to convergence; however the final coupling remains essentially identical to the final coupling for the product initialization. Note that as $N\to\infty$ this example tends to a trajectory which does not fulfill our assumptions since the initial entropy $\entr(\rho^0)$ tends to $\infty$ as $N \to \infty$. Nevertheless, the convergence to the global optimizer still appears to hold.

	\begin{figure}[htbp]
		\centering
		\includegraphics[width=\linewidth]{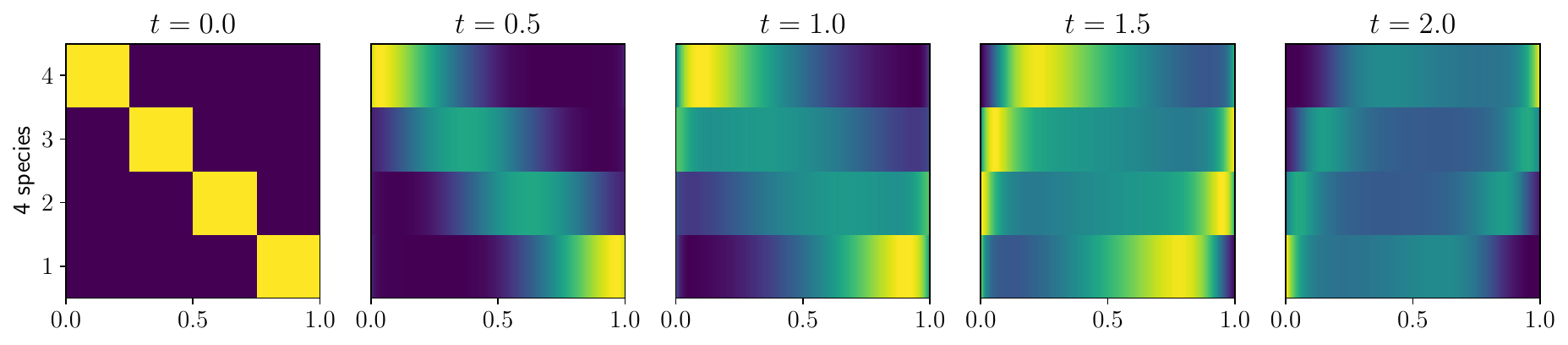}
		\caption{Discrete trajectory $\bar{\rho}_\tau$ for $N = 4$ and flipped initialization.}
		\label{fig:short-trayectory-flipped}
	\end{figure}
	
	\begin{figure}
		\centering
		\includegraphics[width=\linewidth]{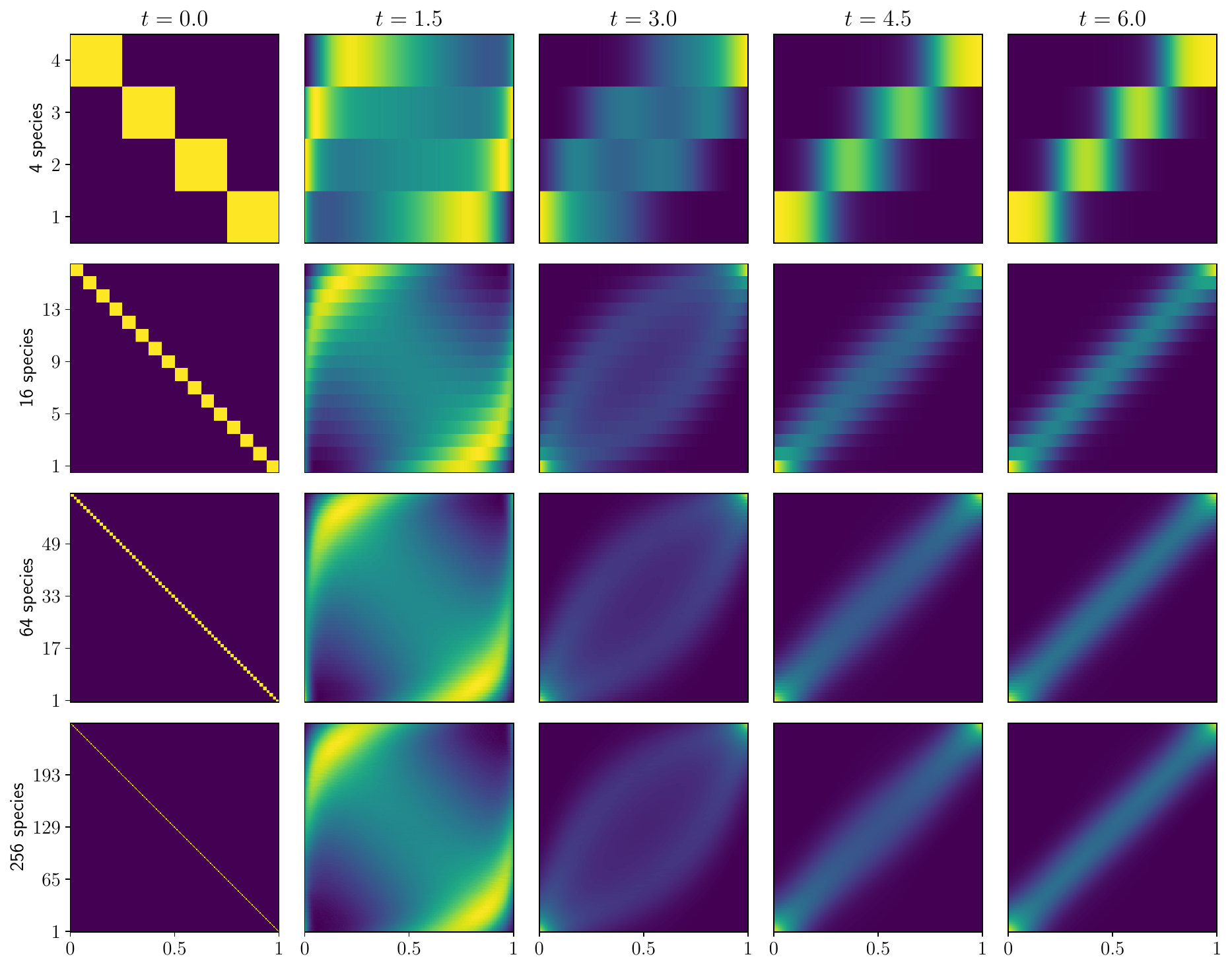}
		\caption{Comparison of discrete trajectories for different number of species $N$ and flipped initialization. Note how the trajectories again tend to a consistent limit, even when the initial conditions have unbounded entropy as $N \to \infty$}
		\label{fig:compare-trayectory-flipped}
	\end{figure}

	Convergence to the minimizer of $\nrg$ is examined in greater detail in Figures \ref{fig:compare-convergence} and \ref{fig:convergence-rate}. Figure \ref{fig:compare-convergence} shows the final discrete iterate (corresponding to $t=10$) under the two studied initializations and several choices of $N$ with the corresponding optimizer of $\nrg$, namely $\rho_*$, obtained with the Sinkhorn algorithm. There appar to be no discernable differences in the images. For a quantitative analysis, in Figure \ref{fig:convergence-rate} we plot the relative score difference
	\begin{equation}
	\Delta \nrg(\rho_t) 
	\assign 
	\frac{\nrg(\rho_t) - \nrg(\rho_*)}{\nrg(\rho_*)}
	\ge 
	0
	\end{equation}
	with respect to the elapsed time $t$, for the two initializations and either $M = N = 64$ or $M = N = 256$. There are two features of interest in the curves. The first is that $\Delta \nrg$ decreases until it reaches a plateau, which is encountered first for lower resolutions. This is due to discretization artefacts, as mentioned at the beginning of this section.
    The second interesting feature is that, even though the flipped initialization takes a longer time to converge (as it has a higher initial score), the final score is approximately independent on the initialization. Furthermore, the convergence rate seems to be independent of the initialization and approximately linear, indicated by the straight, parallel lines Figure \ref{fig:convergence-rate}.
	
		\begin{figure}
		\centering
		\includegraphics[width=0.8\linewidth]{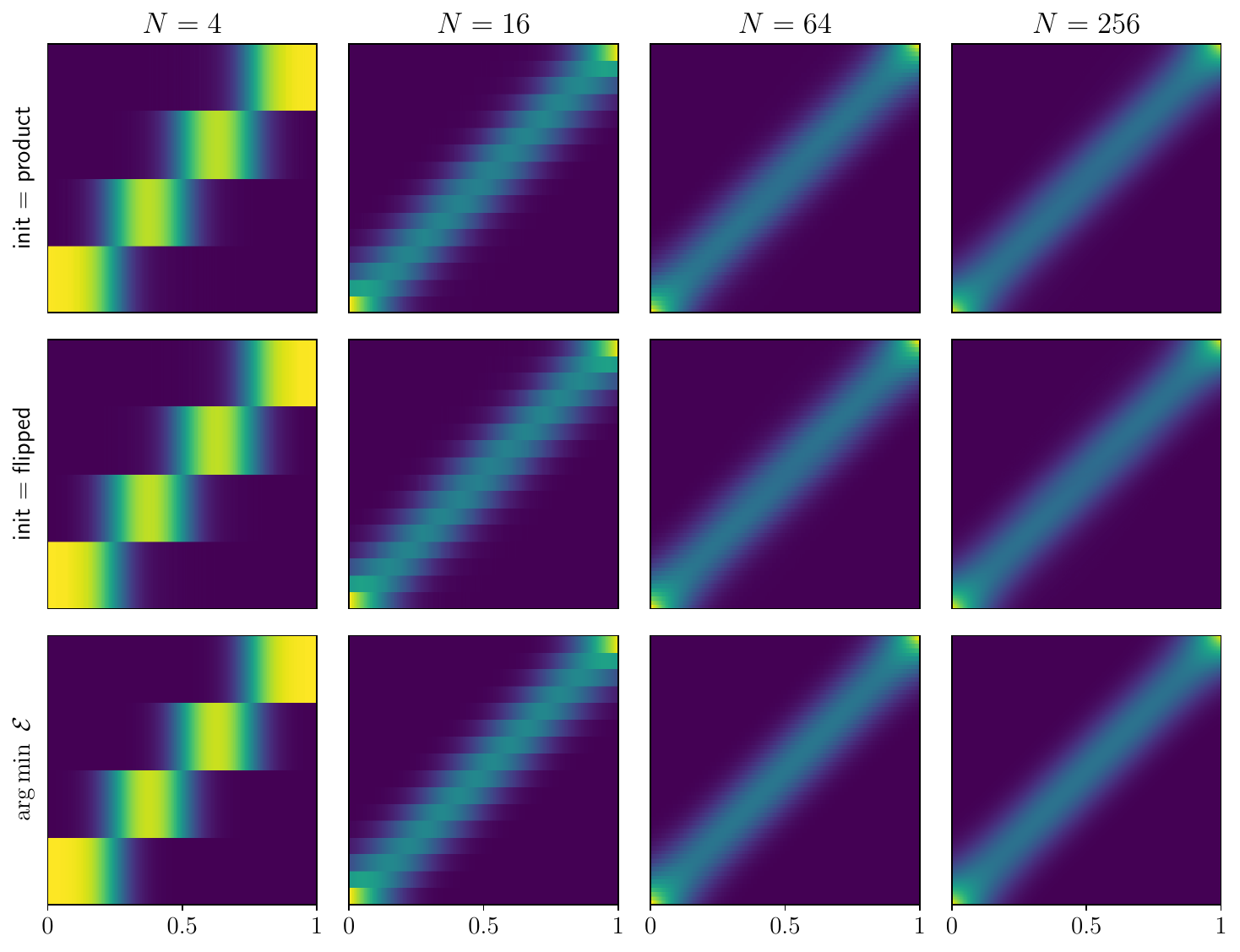}
		\caption{Comparison of the final discrete iterate (corresponding to $t=10$) and the minimizer of $\nrg$.}
		\label{fig:compare-convergence}
	\end{figure}
	
	\begin{figure}
		\centering
		\includegraphics[width=0.5\linewidth]{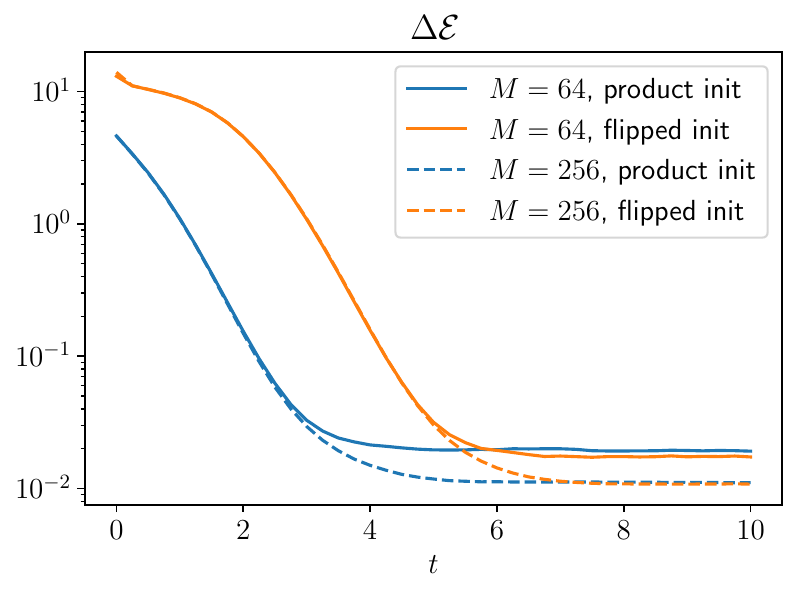}
		\caption{Evolution of the relative difference between $\nrg(\rho_t)$ and $\nrg(\rho_*)$.}
		\label{fig:convergence-rate}
	\end{figure}

	\begin{figure}
		\centering
		\includegraphics[width=\linewidth]{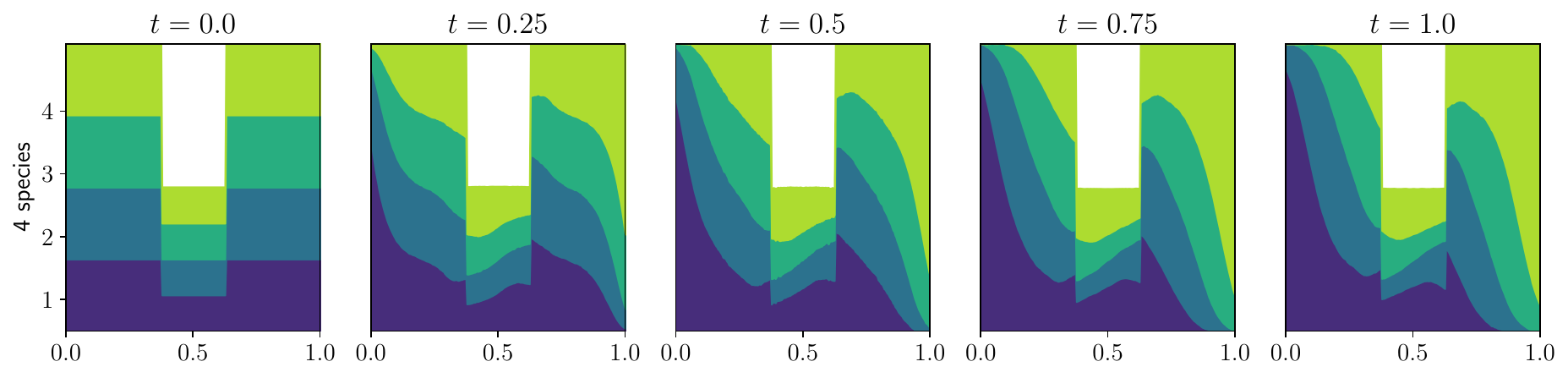}
		\caption{Discrete trajectory $\bar{\rho}_\tau$ for $\mu$ featuring a bottleneck, $N = 4$ and product initialization.
		We change the visualization with respect to previous figures to showcase better the effect of the bottleneck on the density of the different phases. Here the width the different bands represents the density of the respective species.
		Their combined densities add up to $\mu$, which is given by \eqref{eq:density-bottleneck}.
	}
		\label{fig:short-trayectory-bottleneck}
	\end{figure}
	
	\begin{figure}
		\centering
		\includegraphics[width=\linewidth]{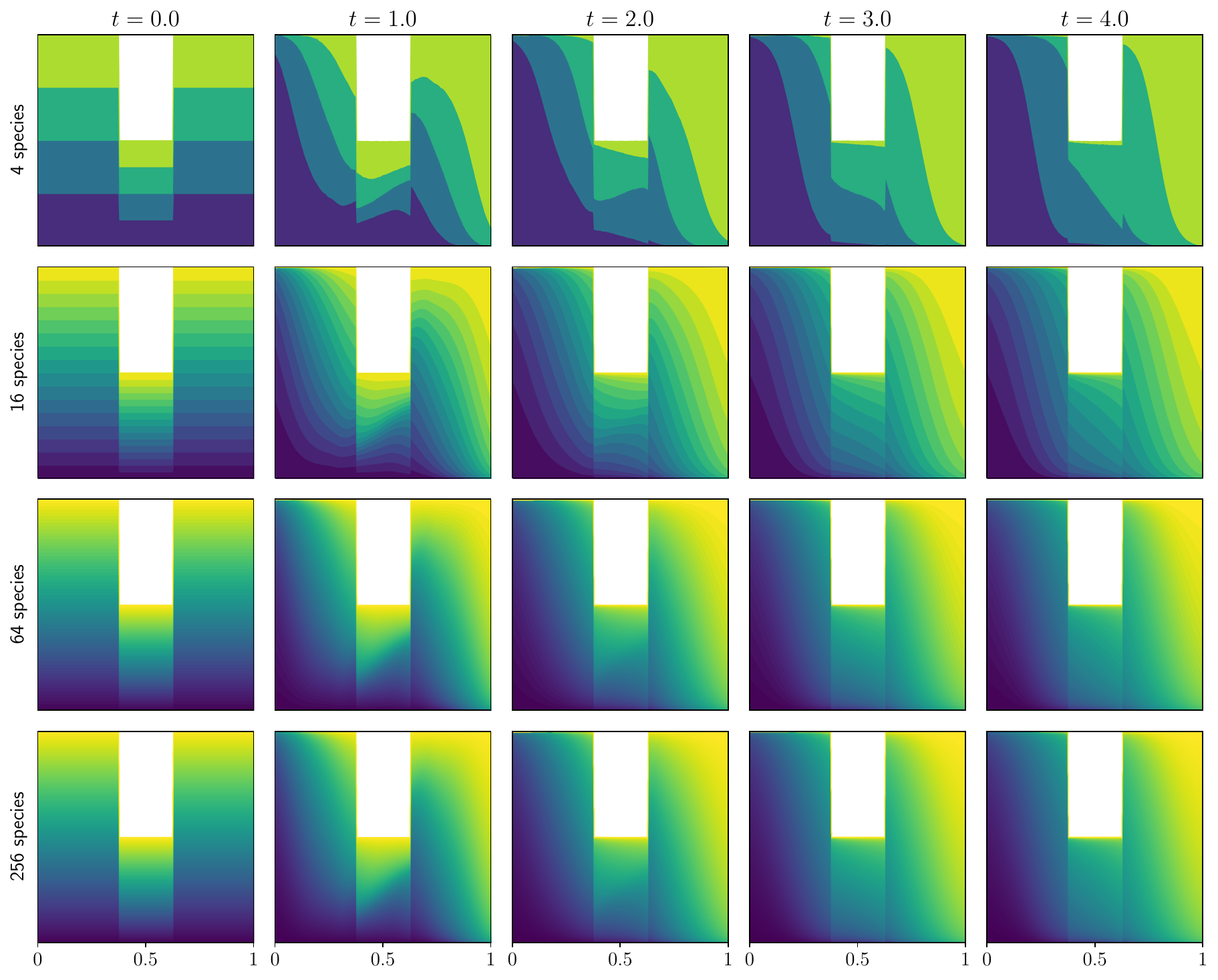}
		\caption{Comparison of discrete trajectories for $\mu$ featuring a bottleneck, product initialization and different number of species $N$. Note how the trajectories again tend to a consistent limit, even when $\mu$ has (in the continuum limit) unbounded Fisher information.}
		\label{fig:compare-trayectory-bottleneck}
	\end{figure}
	
	Finally, Figures \ref{fig:short-trayectory-bottleneck} and \ref{fig:compare-trayectory-bottleneck} show a trajectory for a $\mu$ with discontinuous density, hence not satisfying our Fisher information bound assumption.
    Concretely, we use the density
    \begin{equation}
        \mu(x) \assign \begin{cases} a & \tn{for } x \in [0,1] \setminus [0.5-\delta,0.5+\delta], \\
        b & \tn{for } x \in [0.5-\delta,0.5+\delta].
        \end{cases}
        \label{eq:density-bottleneck}
    \end{equation}
	with $\delta = 1/8$, $b = a/2$ and $a$ chosen so that $\|\mu\| = 1$
    This creates a bottleneck with reduced capacity for mass transport in the middle region.
	The discontinuity in $\mu$ is reproduced by each slice $r(\cdot, y)$ and we observe congestion at the boundaries of the bottleneck region; however the relative density $r(\cdot, y) / \mu(\cdot)$ remains continuous for all times.
	Again, the trajectories show a consistent behavior for all values of $N$ considered, which may indicate that the Fisher information bound hypothesis could be further relaxed. 
	
	In conclusion, the numerical simulations of this section exemplify the stability of the weak solutions of \eqref{eq:weaktxy}-\eqref{eq:weak-potential} as the second marginal $\nu$ tends to a continuum, as well as the asymptotic convergence to the minimizer of the energy $\nrg$ as time goes to infinitiy. Moreover, there is some numerical evidence that the convergence rate may be linear.
    The simulations provide interesting insights on how a continuum of particles (the species $y \in Y$) find an optimal arrangement of positions (the points $x \in X$) by local exchanges, subject to a volume constraint, that may include obstructions and bottlenecks.

\appendix

\section{A technical lemma}
\begin{lemma}
    \label{lem:HB}
    Let $\gamma\in\prob(X\times Y)$ with respective marginals $\mu\in\prob(X)$ and $\nu\in\prob(Y)$ be given, and assume that $\gamma$ possesses a density $u\in L^1(X\times Y, \mu\otimes\nu)$ with respect to $\mu\otimes\nu$. Further, let $f\in L^1(X\times Y, \gamma)$ have the following property: for every $\theta\in L^\infty(X\times Y, \mu \otimes \nu)$ satisfying
    \begin{equation}        
        \label{eq:app-adm}
        \begin{split}
        \int_{X}f(x,y)\,\theta(x,y) u(x,y)\dd\mu(x) &= 0 \quad\text{for $\nu$-a.e.~$y\in Y$}, \\
        \int_{Y}f(x,y)\,\theta(x,y) u(x,y)\dd\nu(y) &= 0 \quad\text{for $\mu$-a.e.~$x\in X$},
        \end{split}
    \end{equation}
    it follows that
    \begin{align}
        \label{eq:app-vanish}
        \int_{X\times Y} f(x,y)\,\theta(x,y)\dd\gamma(x,y) = 0.
    \end{align}
    Then there are $P\in L^1(X, \mu)$ and $Q\in L^1(Y, \nu)$ such that $f(x,y)=P(x)+Q(y)$ for $\gamma$-a.e.~$(x,y)\in X\times Y$.
\end{lemma}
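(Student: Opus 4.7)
Reading the hypothesis as the statement that $f\in L^1(\gamma)$ is $\gamma$-integrally orthogonal to every $\theta\in L^\infty(\mu\otimes\nu)$ satisfying $\intX\theta(x,y)u(x,y)\dd\mu(x)=0$ for $\nu$-a.e.\ $y$ and $\int_Y\theta(x,y)u(x,y)\dd\nu(y)=0$ for $\mu$-a.e.\ $x$, the plan is to translate this orthogonality into the additive structure $f=P+Q$. In the application --- the proof of Lemma~\ref{lemma:bound-X-derivative-rho} --- the density $u=r/\mu$ is bounded above and away from zero (by Lemma~\ref{lem:steppositive} together with the Lipschitz estimates derived inside that proof), and I will work under this implicit boundedness throughout.

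The main step is a well-chosen product-type test function. For arbitrary $\alpha\in L^\infty(X,\mu)$, $\beta\in L^\infty(Y,\nu)$ with $\intX\alpha\dd\mu=\int_Y\beta\dd\nu=0$, set
\[
\theta(x,y)\assign \frac{\alpha(x)\beta(y)}{u(x,y)}.
\]
Boundedness of $1/u$ yields $\theta\in L^\infty(\mu\otimes\nu)$, and centeredness makes $\intX\theta u\dd\mu=\beta(y)\intX\alpha\dd\mu=0$ and $\int_Y\theta u\dd\nu=\alpha(x)\int_Y\beta\dd\nu=0$, so $\theta$ is admissible. Invoking the hypothesis and recognizing $\int f\theta\dd\gamma=\int f\theta u\dd(\mu\otimes\nu)$ produces
\[
\intXY f(x,y)\,\alpha(x)\beta(y)\dd\mu(x)\dd\nu(y)=0 \qquad (\star)
\]
for every centered pair $(\alpha,\beta)$.

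From $(\star)$ the decomposition follows by an elementary averaging argument. Since $u$ is bounded below, $f\in L^1(\mu\otimes\nu)$, so the marginal averages $\bar f_Y(x)\assign\int_Y f\dd\nu$, $\bar f_X(y)\assign\intX f\dd\mu$ and $\bar f\assign\intXY f\dd(\mu\otimes\nu)$ are well defined, and the remainder $F\assign f-\bar f_Y-\bar f_X+\bar f$ has vanishing $X$- and $Y$-marginals by direct computation. Substituting this decomposition into $(\star)$, the separable pieces each integrate to zero against the centered product $\alpha\beta$, leaving $\intXY F\alpha\beta\dd(\mu\otimes\nu)=0$ for centered $\alpha,\beta$. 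The vanishing marginals of $F$ then let me drop the centeredness by splitting $\alpha=(\alpha-\bar\alpha)+\bar\alpha$ (and similarly for $\beta$), so the identity extends to all $\alpha\in L^\infty(\mu)$, $\beta\in L^\infty(\nu)$. Testing with indicators $\alpha=\mathbf{1}_A$, $\beta=\mathbf{1}_B$ and applying a standard monotone-class / $\pi$-system argument forces $F\equiv 0$ $\mu\otimes\nu$-a.e., hence $\gamma$-a.e. Setting $P\assign \bar f_Y$ and $Q\assign \bar f_X-\bar f$ gives $f=P(x)+Q(y)$ as claimed.

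The main obstacle is precisely the requirement that $u$ be bounded away from zero, which is what makes the construction $\theta=\alpha\beta/u$ lie in $L^\infty$ and hence be admissible. This is automatic in the paper's only use of the lemma (thanks to the strict positivity and log-Lipschitz regularity of the minimizing-movement iterates derived in Section~\ref{sec:minimizing-movement}), but a fully general $L^1$ statement would require truncating $\theta$ on $\{u>\varepsilon\}$, compensating $\alpha,\beta$ to preserve the admissibility conditions $\intX\theta u\dd\mu=\int_Y\theta u\dd\nu=0$, and passing $\varepsilon\to 0$; such a refinement is not needed for the paper's purposes.
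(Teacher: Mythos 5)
Your route --- testing with separable perturbations $\theta=\alpha(x)\beta(y)/u(x,y)$ and then running the standard marginal-averaging decomposition $f=\bar f_Y+\bar f_X-\bar f+F$ --- is genuinely different from the paper's, which instead shows that $U=\{P(x)+Q(y)\}$ is a \emph{closed} subspace of $L^1(X\times Y,\gamma)$ and applies the Hahn--Banach separation theorem together with $L^1$--$L^\infty$ duality to produce, for any $f_*\notin U$, an admissible $\theta_*$ violating \eqref{eq:app-vanish}. Given your extra hypothesis that $u$ is bounded above and away from zero, your averaging argument is correct and arguably more concrete (indeed the paper's closing remark notes that explicit test functions suffice when $\gamma=\mu\otimes\nu$). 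The point of the Hahn--Banach argument is precisely that it needs no lower bound on $u$: it works directly in $L^1(\gamma)$ and never has to divide by $u$.

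The gap is that your justification of that extra hypothesis is circular in the paper's one application. The lemma is stated for $u\in L^1(\mu\otimes\nu)$ only, and at the moment Lemma~\ref{lem:HB} is invoked in the proof of Lemma~\ref{lemma:bound-X-derivative-rho}, the only available information is $\rhod^n>0$ a.e.\ (Lemma~\ref{lem:steppositive}); the \emph{uniform} lower and upper bounds on $\rhod^n$ are obtained only afterwards, from the Lipschitz continuity of $\kappa\log\rhod^n-\Pi^n$, which is itself a consequence of the identity \eqref{eq:euler-lagrange-pi} that Lemma~\ref{lem:HB} is being used to establish. So ``the Lipschitz estimates derived inside that proof'' cannot be used as input to the lemma. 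Your sketched repair --- truncating to $\{u>\veps\}$ and passing to the limit --- is also not routine: after truncation the fiber-wise admissibility conditions $\intX\theta u\,\dd\mu(x)=0$ and $\int_Y\theta u\,\dd\nu(y)=0$ no longer factor, so the recentering of $\alpha$ and $\beta$ must be done simultaneously in both variables, which destroys the product structure your argument relies on. As written, the proposal therefore proves a weaker statement than the lemma and does not cover the case the paper actually needs; either carry out the truncation argument in full or switch to a duality/separation argument in $L^1(\gamma)$ as the paper does. (A minor side remark: like the paper's own proof, you implicitly read the admissibility conditions \eqref{eq:app-adm} without the factor $f$ in the integrand, consistent with \eqref{eq:admperturb}; that reading is surely the intended one.)
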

\begin{proof}
    This is an application of the Hahn-Banach separation theorem. Define
    \[ U:=\{ P(x)+Q(y) \,|\, P\in L^1(X, \mu),\,Q\in L^1(Y, \nu) \} .  \]
    This is a subspace of $L^1(X\times Y, \gamma)$, since
    \begin{align*}
        &\int_{X\times Y} |P(x)+Q(y)|\dd\gamma(x,y) \\
        &\le \int_X |P(x)| \left(\int_Y u(x,y)\dd\nu(y)\right) \dd\mu(x)
        + \int_Y |Q(y)| \left(\int_X u(x,y)\dd\mu(x)\right) \dd\nu(y) \\
        &\qquad = \|P\|_{L^1(X,\mu)}+\|Q\|_{L^1(Y,\nu)}.
    \end{align*}
    Further, the subspace $U$ is closed since convergence of $f_n$ with $f_n(x,y)=P_n(x)+Q_n(y)$ in $L^1(X\times Y, \gamma)$ is equivalent to convergence of $P_n$ and $Q_n$ in the respective Banach spaces $L^1(X,\mu)$ and $L^1(Y,\nu)$. 
    
    Fix some $f_*\in L^1(X\times Y, \gamma)$ that is \emph{not} in $U$; the claim is proven if we can verify the existence of some $\theta_*\in L^\infty(X\times Y, \mu \otimes \nu)$ that satisfies \eqref{eq:app-adm}, but \emph{not} \eqref{eq:app-vanish} with $f=f_*$. Since $U$ is closed, $f_*$ has a positive distance to $U$, and the Hahn-Banach theorem thus guarantees the existence of a continuous linear functional $\varphi_*:L^1(X\times Y,\gamma)\to\R$ with $\varphi_*(U)=\{0\}$ and $\varphi_*(f_*)=1$. By $L^1$-$L^\infty$ duality, there exists some $\theta_*\in L^\infty(X\times Y, \mu\otimes \nu)$ such that
    \begin{align*}
        \varphi_*(f) = \int_{X\times Y} f\,\theta_*\dd\gamma \quad \text{for all $f\in L^1(X\times Y, \gamma)$.}
    \end{align*}
    Further, $\varphi_*(U)=\{0\}$ implies for $f(x,y):=a(x)+b(y)$ with arbitrary $a\in L^\infty(X, \mu)$ and $b\in L^\infty(Y, \nu)$ that 
    \begin{align*}
        0 = \varphi_*(f) 
        = \int_X a(x)\left(\int_Y\theta_*(x,y)u(x,y)\dd\nu(y)\right)\dd\mu(x)
        + \int_Y b(y)\left(\int_X\theta_*(x,y)u(x,y)\dd\mu(x)\right)\dd\nu(y).
    \end{align*}
    Hence $\theta_*$ satisfies the conditions \eqref{eq:app-adm}. On the other hand, one has
    \begin{align*}
        1 = \varphi_*(f_*) 
        = \int_{X\times Y} f_*\,\theta_*\dd\gamma,
    \end{align*}
    i.e., $f_*$ does \emph{not} satisfy \eqref{eq:app-vanish} for the choice $\theta=\theta_*$.
\end{proof}
\begin{remark}
    In case that $\gamma=\mu\otimes\nu$ is a product measure, i.e., $u\equiv1$, one does not need the abstract machinery of the Hahn-Banach theorem. Then, for any given $f_*\in L^1(X\times Y, \gamma)$ that is not of the form $f(x,y)=P(x)+Q(y)$, a suitable test function $\theta_*$ satisfying \eqref{eq:app-adm} but not \eqref{eq:app-vanish} for $f=f_*$ can be obtained very explicitly. The generalization of that construction to $\gamma$ that are not of product form is unclear. 
\end{remark}

\subsection*{Acknowledgements}
CC acknowledges partial support from the French National Research Agency through LabEx CEMPI (ANR-11-LABX-0007) and project MATHSOUT of the PEPR Mathematics in Interaction (ANR-23-EXMA-0010).
DM's research was supported by the DFG Collaborative Research Center TRR 109, ``Discretization in Geometry and Dynamics''.
IM and BS were supported by the Emmy Noether programme of the DFG. 

\bibliographystyle{plain}
\bibliography{main}
	
\end{document}